\documentclass{amsart}
\usepackage{tikz}
\usepackage{xcolor}
\usepackage{amssymb,latexsym,amsmath,extarrows}
\usepackage{graphicx,mathrsfs,comment}
\usepackage[margin=1in, centering]{geometry}
\usepackage{hyperref,url}

\usepackage{amstext}
\usepackage{bbm}

\numberwithin{equation}{section}

\newtheorem{theorem}{Theorem}[section]
\newtheorem{lemma}[theorem]{Lemma}

\newtheorem{proposition}[theorem]{Proposition}
\newtheorem{remark}[theorem]{Remark}

\newtheorem{definition}[theorem]{Definition}
\newtheorem{corollary}[theorem]{Corollary}

\usepackage{booktabs}

\newcommand{\R}{\mathbb{R}}

\begin{document}

\title[]{Improved $L^p$ bounds for the helical maximal function in dimensions $n\ge5$}


\author{Changkeun Oh}\address{ Changkeun Oh\\ Department of Mathematical Sciences and RIM, Seoul National University, Republic of Korea} \email{changkeun@snu.ac.kr}

\author{Jaehyun Woo} \address{ Jaehyun Woo \\ Department of Mathematical Sciences, Seoul National University, Republic of Korea}\email{wjhwsh1@snu.ac.kr}

\begin{abstract}
Let $n\ge 5$. We prove that the helical maximal operator is
bounded on $L^p(\mathbb R^n)$ for some $p<2n-4$. This improves the previously known range $p>2n-4$ obtained by
Gan-Maldague-Oh \cite{gan2025sharplocalsmoothingestimates}. As observed by Beltran--Hickman \cite{BH25}, the exponent
$2n-4$ is the natural limit of the standard
local smoothing method.
\end{abstract}

\maketitle


\section{Introduction}

Assume that a curve $\gamma: (-1,1) \rightarrow \R^n$ is smooth and nondegenerate in the sense that
\begin{equation}
 \det{ \big( \gamma'(s),\gamma''(s),\ldots,\gamma^{(n)}(s) \big)} \neq 0
\end{equation}
for all $s \in (-1,1)$. Define the averaging operator 
\begin{equation}
 A_tf(x) = \int_{-1}^1 f \big(x-t \gamma(s) \big) \psi(s)\,ds
\end{equation}
where $\psi\in C_c^\infty((-1,1))$ is a fixed nonzero function. Define the helical maximal operator by
\begin{equation}
 \mathscr Mf(x)=\sup_{t>0} |A_tf(x)|.
\end{equation}

Our main theorem is as follows.

\begin{theorem}\label{0524.thm51}
Let $n \geq 5$. There exists $\epsilon_0=\epsilon_0(n)>0$ such that for every $p>2n-4-\epsilon_0$, we have
\begin{equation}
 \|\mathscr Mf\|_{L^p(\R^n)} \leq C_{p,n} \|f\|_{L^p(\R^n)}
\end{equation}
for some constant $C_{p,n}$ depending only on $p$ and $n$.
\end{theorem}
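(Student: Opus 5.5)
We reduce, in the usual way, to a frequency-localized maximal estimate and then interpolate. Decompose $\widehat{\psi\,d\sigma_\gamma}$ dyadically: $A_t = \sum_{j\ge0} A_t^j$, where $A_t^j$ has frequency support $|\xi|\sim 2^j$ (after rescaling by $t$). By scaling together with the standard Littlewood--Paley and Sobolev embedding in $t$, it suffices to fix $t\in[1,2]$ and prove, for some $p_0<2n-4$,
\begin{equation*}
\Big\|\sup_{1\le t\le 2}|A_t^j f|\Big\|_{L^{p_0}(\R^n)}\lesssim 2^{-\delta j}\|f\|_{L^{p_0}(\R^n)}
\end{equation*}
with some $\delta>0$. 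Interpolating with the trivial $L^\infty$ bound then gives the full range $p>p_0$. Via Sobolev in $t$, this reduces to a local smoothing estimate for the Fourier integral operator $f\mapsto A^j_tf(x)$ on $\R^n\times[1,2]$ with a gain of more than $1/p_0$ derivatives.

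Next we split the frequency space according to the degeneracy of $\xi$ relative to the curve. Following Beltran--Hickman and Gan--Maldague--Oh, decompose $\{|\xi|\sim 2^j\}$ into regions where $|\langle\gamma^{(i)}(s),\xi\rangle|$ vanishes to order $k$ at the critical point $s(\xi)$, for $k=2,\dots,n$. For the piece with $k$-fold degeneracy we use the known sharp local smoothing estimates: the decoupling inequality for the cone generated by $\gamma$ together with the square function estimates of Gan--Maldague--Oh. These give admissible bounds for all pieces except the most degenerate one ($k=n-1$, and for $k=n$ the stationary phase decay is already strong), which is the source of the threshold $2n-4$. As Beltran--Hickman observed, local smoothing alone cannot break $2n-4$ for that piece, because it is sharp on a Knapp-type example concentrated near a single direction.

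For the critical piece we replace local smoothing by a maximal-function-specific argument. The Knapp examples that saturate local smoothing are localized to one plate of dimensions $2^{-j}\times\cdots$, and for such $f$ the supremum over $t$ is attained on a set of $t$ of small measure rather than all of $[1,2]$. We therefore prove an $L^2$-based estimate that gains from the $t$-supremum: a square-function/$TT^*$ estimate at $L^2$ and an $L^\infty$-type estimate for wave packets localized to a single plate. These are combined by a bilinear/broad--narrow decomposition. In the broad part, transversality of plates (different $s$) gives an improved multilinear Kakeya/restriction bound, valid for $p$ slightly below $2n-4$. In the narrow part, the frequencies lie in a small neighborhood of a lower-dimensional cone, and we rescale to a curve in $\R^{n-1}$, where the dimension is lower and an induction on scales closes with a margin $\epsilon_0$.

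The main obstacle is the narrow/critical degenerate piece. We must show that, after rescaling, the $(n-1)$-dimensional estimate is strictly better than what the $2n-4$ numerology requires, and that the losses from the broad--narrow decomposition (powers $K^{C}$, $2^{\epsilon j}$) are absorbed by the positive gain $\delta$. Our first approach would be to quantify the gain in the broad part via the $k$-linear Bennett--Carbery--Tao inequality with $k=n-1$, and then optimize $p_0$ to obtain an explicit $\epsilon_0(n)$.
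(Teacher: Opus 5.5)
There is a genuine gap, and it sits at the step that matters. After your Sobolev-in-$t$ reduction, every estimate you actually name for the critical piece is still an estimate for $A^j_tf$ in $L^p(\R^n\times[1,2])$. That covers both the broad term via transversality and the narrow term via rescaling and induction. These are local smoothing estimates, and by Beltran--Hickman no such estimate gets below $2n-4$. The ``maximal-function-specific'' $L^2$/$TT^*$ and single-plate $L^\infty$ estimates that are supposed to replace them are never specified.

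Transversality also cannot by itself improve the broad term. The Gan--Maldague--Oh $\ell^{p/2}$ inequality, with loss $(R^{1/n})^{\frac12-\frac2p}$ for the most degenerate plates, is sharp on a full $R$-ball. The sharp example (all $F_\theta$ of size $\sim1$ on the ball, with random signs) is spread out, i.e.\ broad.

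What is missing is a way to keep the supremum inside the estimate. The paper does this by linearization rather than Sobolev embedding:
\begin{itemize}
\item Tile $\R^n$ by $R^{-1}$-cubes $Q$. For each $Q$, pick the $R^{-1}$-interval $I_{j(Q)}\subset[1,2]$ on which $\sup|A_tf_k(x)\chi(t)|$ over $Q\times I_j$ is largest.
\item Bernstein's inequality applies because the space-time Fourier support lies in an $O(R)$-ball. It gives $\|\mathscr M_{[1,2]}f_k\|_p^p\lesssim R\int|A_tf_k\,\chi|^p(\mathbf 1_E*\Psi_R)$ with $E=\bigcup_QQ\times I_{j(Q)}$.
\item Each $Q$ contributes one cube, so $E$ is a Katz--Tao $(R^{-1},n)$-set. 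It therefore suffices to prove local smoothing \emph{on $E$}: $\|A_tf\|_{L^p(E)}\lesssim 2^{-k(1/p+\epsilon)}\|f\|_p$.
\end{itemize}
Your heuristic that the supremum lives on a small set of $t$ points this way. But it must hold for every $f$, not just Knapp examples, and it must be encoded as this $n$-dimensional set for the later estimates to use it.

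The gain then comes only from sparseness, and this shapes the rest of the argument differently from your plan.
\begin{itemize}
\item \textbf{Broad term.} After rescaling, $E$ has density $\lesssim R^{-1/n}$ in every $R^{1/n}$-cube. One applies an $(n+1)$-linear restriction estimate in $L^{n+1}$ on each $R^{1/n}$-cube for the two-dimensional conic pieces $\{\lambda(\kappa(s),1)\}$. This is a Brascamp--Lieb estimate needing $n+1$ separated pieces, not Bennett--Carbery--Tao with $n-1$ pieces, which concerns hypersurfaces. H\"older on $E\cap Q$ then gains $R^{-\frac1n(\frac1p-\frac1{n+1})}$.
\item \textbf{Range of $p$.} This only works for $p<n+1\le 2n-4$; for larger $p$ H\"older runs the wrong way. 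To reach $p$ near $2n-4$ one interpolates, on the set $E$, with the sharp GMO estimate at $p=n^2+n-2$.
\item \textbf{Narrow term.} No passage to a curve in $\R^{n-1}$ is needed. Short arcs rescale to the same kind of curve at scale $R\Delta^n$, which gains $\Delta^{\frac12-\frac2p}$ and absorbs the broad--narrow losses.
\item \textbf{Where it enters.} The improved inequality is inserted only at the first scale $\delta_1$ of the GMO multiscale decomposition. The case $\delta_1>R^{-1/(2n)}$ is handled by the remaining scales, whose threshold is governed by $\omega_{n-1}$.
\end{itemize}

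Finally, your identification of the critical piece is inverted. The obstruction comes from the most degenerate region: $k=n$ in your indexing, with the weakest decay $|\xi|^{-1/n}$ and plates $\Pi_R^\varsigma(\theta)$ around the dual curve. That region produces the condition $p\,\omega_n(p)>1\iff p>2n-4$. It is not a piece where stationary phase decay is ``already strong.''
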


A standard example shows that $p>n$ is necessary, and this is conjectured to be the sharp range; see the discussion following \cite[Theorem~1.4]{KLOsm}.
For averages over dilates of convex planar curves, Bourgain proved the sharp $L^p$ bound for $p>2$ \cite{Bourgain1986}; Mockenhaupt--Seeger--Sogge later obtained the same range through local smoothing \cite{MockenhauptSeegerSogge1992}. In $\mathbb R^3$, Pramanik--Seeger established the first bounds for sufficiently large $p$ \cite{pramanik2007p}. The sharp range $p>3$ was subsequently proved independently by Ko--Lee--Oh \cite{KLOmax} and Beltran--Guo--Hickman--Seeger \cite{MR4908993}.

In higher dimensions, Ko--Lee--Oh proved local smoothing estimates of sharp order in the high-$p$ range and obtained the first nontrivial maximal bound for $n\geq4$, namely $p>2(n-1)$ \cite[Theorems~1.3--1.4]{KLOsm}. Beltran--Hickman identified an additional necessary condition and formulated a revised local smoothing conjecture \cite{BH25}. Gan-Maldague-Oh proved this revised conjecture in all dimensions \cite{gan2025sharplocalsmoothingestimates}. Their result yields the conjectural maximal range $p>n$ for $n=2,3,4$ and the range $p>2(n-2)$ for $n\geq5$.

In dimensions $n \geq 5$,
the exponent $2(n-2)$ is the natural barrier for the local smoothing approach. Our main theorem improves the barrier by a positive amount.
Whereas the arguments in \cite{KLOsm, gan2025sharplocalsmoothingestimates} deduce maximal estimates from local smoothing on the full Euclidean space $\R^{n+1}$, our argument uses a local smoothing estimate on an $n$-dimensional Katz--Tao set in $\R^{n+1}$.
\medskip

To state the estimates on fractal sets, we introduce the following notation.
For $\delta>0$, let $\mathcal D_\delta^{(m)}$ denote the partition of $\mathbb R^m$ into half-open $\delta$-cubes, and write $|F|_\delta$ for the number of such cubes meeting $F$.

\begin{definition}
 Let $\alpha,C,\delta > 0$ and $m\in\mathbb Z_{>0}$. A measurable set $E$ in $\R^m$ is called a Katz--Tao $(\delta,\alpha,C)$-set if
 \begin{equation}
 \sup_{Q_r\in\mathcal{D}_r^{(m)}} |E\cap Q_r|_\delta \leq C\cdot (r/\delta)^{\alpha}
 \end{equation}
 for every $r\geq \delta$.
\end{definition}

The notion of Katz--Tao $(\delta,\alpha,C)$-sets first appears in \cite{KT01}. By the Littlewood-Paley decomposition and discretization, Theorem \ref{0524.thm51} follows from the following theorem.

\begin{theorem}\label{0525.thm13}
Let $n \geq 5$ and $k\in\mathbb{Z}_{\geq0}$. Let $E$ be a Katz--Tao $(2^{-k},n,C)$-set in $\mathbb{R}^{n}\times [1,2]$. The following holds for some $\epsilon_0=\epsilon_0(n,C)>0$: for every $p>2n-4-\epsilon_0$, there exists $\epsilon=\epsilon(p,n,C)>0$ such that
 \begin{equation}
 \|A_tf\|_{L^p_{x,t}(E)} \lesssim_{p,n,C} 2^{-k\left(\frac1p+\epsilon\right)} \|f\|_{L^p(\R^n)}
 \end{equation}
 for any function $f$ whose Fourier transform is supported on $B(0,2^{k+1})\setminus B(0,2^{k-1})$.
\end{theorem}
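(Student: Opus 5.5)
We would prove Theorem \ref{0525.thm13} by a frequency decomposition adapted to the curve, followed by two estimates: the sharp local smoothing of Gan--Maldague--Oh \cite{gan2025sharplocalsmoothingestimates}, and an improvement for the worst pieces that exploits the Katz--Tao hypothesis on $E$.

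\textbf{Decomposition and reduction.} Write $\widehat{A_tf}(\xi)=\hat f(\xi)\,\widehat{\psi\,d\gamma}(t\xi)$. For $|\xi|\sim 2^k$, decompose $\xi$ according to the order of vanishing of $s\mapsto\langle\gamma(s),\xi\rangle$. At the critical point $s(\xi)$, the first $j$ derivatives $\langle\gamma^{(i)},\xi\rangle$, $1\le i\le j$, are small and $\langle\gamma^{(j+1)},\xi\rangle\sim 2^k$. The pieces with $j\le n-3$ decay like $2^{-k/(j+1)}$ with $j+1\le n-2$. For these, the local smoothing of \cite{gan2025sharplocalsmoothingestimates} on the full space $\R^{n+1}$ already gives
\[
\|A_t f\|_{L^p(\R^n\times[1,2])}\lesssim 2^{-k(\frac2p+\epsilon)}\|f\|_p
\]
for some $p<2n-4$. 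It then suffices to lose at most $2^{k/p}$ when passing from the full space to $E$. This loss is controlled by H\"older and locally constant considerations on $2^{-k}$-cubes, since $|E|\lesssim 1$ at scale $2^{-k}$ ($E$ meets $\lesssim 2^{kn}$ cubes, so $|E|\lesssim 2^{-k}$). We therefore expect to gain
\[
\|A_tf\|_{L^p(E)}\lesssim 2^{k\cdot 0}\,\|A_tf\|_{L^p(\R^{n}\times[1,2])}\lesssim 2^{-k(\frac2p+\epsilon)}\|f\|_p .
\]
This gives more than needed, so these pieces are harmless.

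\textbf{The critical pieces.} The real content lies in the degenerate pieces near the cone generated by the $(n-2)$-fold vanishing, namely $j=n-2,n-1$. There the full-space local smoothing is sharp exactly at $p=2n-4$, by the Beltran--Hickman example \cite{BH25}. That example is saturated by wave packets concentrated on a set of dimension larger than $n$ in $\R^{n+1}$: it is a union of thin plates filling an $(n+1)$-dimensional region coherently. Our plan for these pieces has three steps.
\begin{enumerate}
\item Decompose $f$ further into angular sectors around the $(n-2)$-degenerate cone at scale $2^{-k\sigma}$, and rescale each sector via the standard affine rescaling for nondegenerate curves (as in \cite{KLOsm}).
\item Apply a refined decoupling or refined Strichartz estimate adapted to the wave packets. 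Such an estimate should gain whenever the wave packets meeting $E$ occupy only an $n$-dimensional portion of the $(n+1)$-dimensional tubes/plates.
\item Combine the resulting $L^2$-type bound, where the Katz--Tao condition gives an incidence estimate
\[
\#\{Q\in\mathcal D_{2^{-k}}: Q\cap E\cap T\neq\emptyset\}\lesssim C\,(r_T/2^{-k})^{n}
\]
for each plate $T$ of width $r_T$, with the $L^\infty$ trivial bound. Interpolating between these two yields an $\epsilon$ gain at $p=2n-4$, and by continuity for $p>2n-4-\epsilon_0$.
\end{enumerate}

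\textbf{Main obstacle.} The hardest step is step 2, the fractal refined estimate at the critical degeneracy. One must show that the Beltran--Hickman extremizer cannot lie inside an $n$-dimensional Katz--Tao set. The approach we would try first is a broad/narrow (Bourgain--Guth) induction on scales:
\begin{itemize}
\item the narrow part is handled by induction using the rescaling in step 1, noting that Katz--Tao sets remain Katz--Tao under the parabolic-type rescaling, up to constants;
\item the broad part is handled by multilinear Kakeya-type estimates, combined with the bound that a Katz--Tao $n$-set meets each $2^{-k}$-neighborhood of an $(n-1)$-plane in relatively few cubes.
\end{itemize}
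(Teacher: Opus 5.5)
Your outline has the right heuristic: use the dimension of $E$ to break the Beltran--Hickman barrier at the most degenerate frequencies, through a broad--narrow argument with multilinear estimates. But the decisive estimate is only asserted (``should gain''), and the concrete mechanisms you propose for it break down.

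First, you never state what the fractal estimate is, or how a gain at one scale survives the rest of the argument. In the paper it is the $\ell^{p/2}$ inequality of Theorem~\ref{0524.thm54} for the maximally degenerate plates $\Pi_R^\varsigma(\theta)$ on $E\cap B_R$, with an extra factor $(R^{1/n})^{-c}$. It is inserted only at the first step of the Gan--Maldague--Oh multiscale iteration, followed by Propositions~\ref{0526.prop23} and \ref{0526.prop24} and a level-set/$L^\infty$ argument. Moreover, it is used only when $\delta_1\le R^{-1/(2n)}$: for larger $\delta_1$ the remaining scales give the exponent $\omega_{n-1}(p)$, which already beats the barrier without using $E$.

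Second, your narrow-part induction relies on Katz--Tao sets being preserved by the rescaling, and this is false. The rescaling is anisotropic; in the normalization of Lemma~\ref{lem:ms-parent-normalization}, $\mathsf A_J^{-T}$ has singular values $\sim1,\rho,\ldots,\rho^n$. For example, the unit neighbourhood of the $1$-Lipschitz graph $x_{n+1}=\rho^{-1}\sin(\rho x_1)$ is Katz--Tao with constant $O(1)$. After the compression $x_1\mapsto\rho x_1$ it becomes the graph of $\rho^{-1}\sin x_1$, which meets $\gtrsim\rho^{-1}r^n$ unit cubes in $r$-cubes centred on it when $r\gtrsim\rho^{-1}$. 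The paper never needs such a property. Narrow intervals have length $\sim\Delta=R^{-\varepsilon_{\rm bn}^2/n}$, and the full-space sharp estimate at the reduced scale $R\Delta^n$ already gains $\Delta^{\frac12-\frac2p}$. That beats the $\Delta^{-\beta_{\rm bn}}$ loss of the broad--narrow lemma, so $E$ enters only in the broad term.

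Third, the broad part needs more than multilinear Kakeya plus an incidence count near $(n-1)$-planes. What works is the following combination on $R^{1/n}$-cubes $Q$:
\begin{itemize}
\item the $(n+1)$-linear restriction estimate for the cone, i.e.\ Brascamp--Lieb transversality of its tangent $2$-planes (Proposition~\ref{0602.prop31}), rescaled to intervals of length $\rho_{\rm bn}$;
\item the density bound $|E\cap Q|\lesssim R^{-1/n}|Q|$;
\item H\"older from $L^{n+1}(Q)$ down to $L^p(E\cap Q)$, which gains $(R^{1/n})^{-(\frac1p-\frac1{n+1})}$.
\end{itemize}
This gain exists only for $p<n+1$, whereas $2n-4\ge n+1$ for all $n\ge5$. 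The paper therefore interpolates the gained inequality at some $p_\#<n+1$ with the lossless sharp Gan--Maldague--Oh inequality at $p_n=n^2+n-2$, using Lemma~\ref{lem:ms-large-p-admissible-interpolation}, which holds on an arbitrary set $Y$. This yields a gain on all of $(p_\#,p_n)$, uniform near $2n-4$.

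Your step 3 cannot replace this, at least read as interpolating operator bounds for $A_t$. If $\|A_tf\|_{L^2(E)}\lesssim 2^{-kb}\|f\|_2$ is interpolated with $\|A_tf\|_\infty\le\|f\|_\infty$, one gets $2^{-2kb/p}$ on $L^p(E)$. This beats $2^{-k/p}$ at $p=2n-4$ only if $b>\frac12$. But then it beats $2^{-k/p}$ for every $p\ge2$, which via Section~\ref{sec:maxred} gives boundedness of $\mathscr M$ for $p\le n$, and that is false.

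Finally, for the non-critical pieces you only need full-space smoothing of order $\frac1p+\epsilon$, since $\|A_tf\|_{L^p(E)}\le\|A_tf\|_{L^p(\R^n\times[1,2])}$. The claimed bound $2^{-k(\frac2p+\epsilon)}$ exceeds the sharp smoothing order $\frac2p$ and should not be asserted.
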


For $0<h\leq1$, set $\mathbb I_h:=h\mathbb Z \cap [-1,1]$. Define
\begin{equation}
\mathcal G_h
:=
\left\{
\theta_{s,h}:=[-1,1]\cap[s-h,s+h]:
s\in\mathbb I_h
\right\},
\end{equation}
which covers $[-1,1]$ with overlap at most $3$. For $\theta=\theta_{s,h}$, write $s_\theta:=s$ and call it the center of $\theta$. 
Define $\Gamma(s):=(\gamma(s),1)\in\mathbb R^{n+1}$. For $R\ge1$ and an interval $\theta\in\mathcal G_{R^{-1/n}}$, define
\begin{equation}
\Pi_{R}^{\varsigma}(\theta)
:=
\left\{
\zeta\in\mathbb R^{n+1}:
\begin{array}{ll}
|\langle \zeta,\Gamma^{(k)}(s_\theta)\rangle|
\lesssim R^{-(n-k)/n},
& 0\le k\le n-1,\\
\varsigma\langle \zeta,\Gamma^{(n)}(s_\theta)\rangle \sim 1
\end{array}
\right\},\qquad \varsigma\in\{+,-\}.
\end{equation}
The set $\Pi_{R}^{\varsigma}(\theta)$ is a frequency plate with widths $1,R^{-1/n},\ldots,R^{-1}$ in the frame dual to $(\Gamma(s_\theta),\dots,\Gamma^{(n)}(s_\theta))$.
\medskip

The main ingredient in the proof of Theorem \ref{0525.thm13} is the following fractal $\ell^{p/2}$-estimate. Compared with \cite[Theorem~5.3]{gan2025sharplocalsmoothingestimates}, it gains a fixed power of $R$ when integration is restricted from an $R$-ball $B_R^{(n+1)}$ to an $n$-dimensional Katz–Tao set $E$.

\begin{theorem}\label{0524.thm54}
Let $R\geq1$. Fix $\varsigma \in \{+,-\}$. Let $n \geq 4$ and $4<p<n^2+n-2$. Let $E$ be a Katz--Tao $(1,n,C)$-set in $\R^{n+1}$. Then there exists $c=c(p,n)>0$ such that 
\begin{equation}
\left\|
\sum_{\theta \in \mathcal G_{R^{-1/n}} }F_\theta
\right\|_{L^p(E\cap B_R^{(n+1)})}
\lesssim_{p,n,C}
(R^{\frac1n})^{\frac12-\frac2p-c} 
\left\|
\left(
\sum_{ \theta \in \mathcal G_{R^{-1/n}} } |F_\theta|^{\frac{p}{2}}
\right)^{\frac2p}
\right\|_{L^p(W_{B_R^{(n+1)}} )}
\end{equation}
for any family $(F_{\theta})_{\theta\in\mathcal G_{R^{-1/n}}}$ of functions satisfying $
\operatorname{supp}\widehat{F_\theta}\subset \Pi_R^\varsigma(\theta)$. 
\end{theorem}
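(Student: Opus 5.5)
We will follow the induction-on-scales (iterated decoupling) scheme behind \cite[Theorem~5.3]{gan2025sharplocalsmoothingestimates}. In that scheme one passes from the plates $\Pi_R^\varsigma(\theta)$ at scale $R$ to coarser plates by a sequence of steps, and each step applies a lower-dimensional square-function or $\ell^{p/2}$ estimate at an intermediate scale. The constant $(R^{1/n})^{\frac12-\frac2p}$ is exactly the constant produced by that argument on a full ball $B_R^{(n+1)}$, so it suffices to find one step where restricting to $E$ yields a power gain $R^{-c/n}$.

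\textbf{Step 1: reduce to a pure counting (Kakeya-type) inequality.} Write $R_1=R^{1/n}$. Locally on balls of radius $R_1^{n-1}$, the function $\sum_{\theta}F_\theta$ splits into pieces $F_\tau$ over the coarser arcs $\tau\in\mathcal G_{R_1^{-1}}$. Each $F_\tau$ is essentially constant on dual slabs $T_\tau$, which are $(n+1)$-dimensional boxes with dimensions $R\times R^{(n-1)/n}\times\cdots\times R^{1/n}\times 1$. Following the broad/narrow dichotomy, the narrow part is handled by rescaling: a parabolic rescaling maps $E$ to a set that is still Katz--Tao with dimension at most $n$, so induction on $R$ applies to it. For the broad part, the standard argument bounds $\|\sum_\theta F_\theta\|_{L^p}$ by $\|(\sum_\tau|F_\tau|^2)^{1/2}\|_{L^p}$, losing only $R^\epsilon$, and then controls the latter through $\ell^{p/2}$ sums of wave packets. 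The factor $R_1^{\frac12-\frac2p}$ enters only through the comparison $\|\sum_\tau |F_\tau|^2\|_{L^{p/2}}\lesssim R_1^{1-4/p}\|(\sum_\tau|F_\tau|^{p/2})^{2/p}\|^2$. That comparison is sharp only when essentially all the slabs $T_\tau$ overlap on a common set of full measure inside $B_R$.

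\textbf{Step 2: use $E$ to break the sharp example.} After pigeonholing the wave packets (uniform amplitude, and a uniform number $\mu$ of directions $\tau$ overlapping at each relevant point), the square function on $E$ is controlled by the quantity $\mu^{p/2-1}\,|E\cap\bigcup T|$ compared with $\sum_T|T|$. The extremal configuration requires $\mu\sim R_1$ at a positive proportion of points of $\bigcup T$. We will show this cannot happen on a large part of $E$. The set where $\mu\gtrsim R_1^{1-\delta}$ is a union of regions on which slabs from many separated directions meet. By the nondegeneracy of $\Gamma$, distinct slab directions in $\mathcal G_{R_1^{-1}}$ are transverse in the $R^{1/n}$-direction, so such a high-multiplicity region is contained in a neighborhood of a lower-dimensional set. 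Concretely, we expect it to be an $R^{1/n}$-neighborhood of a set of dimension at most about $n-1$ inside each $R$-ball, which is analogous to the "grains" in \cite{gan2025sharplocalsmoothingestimates}. On the other hand, $E$, being Katz--Tao of dimension $n$, cannot both fill a positive proportion of each slab and concentrate on that neighborhood. Each slab $T$ meets $E$ in at most $\lesssim |T|^{n/(n+1)}$-type amounts after accounting for its anisotropy. Quantifying these two facts should give $|E\cap\{\mu\gtrsim R_1^{1-\delta}\}|\lesssim R^{-c'}|E\cap B_R|$-type savings, and the resulting gain is $R_1^{-c}$ with $c$ depending on $\delta$ and $p$.

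\textbf{Step 3: the restriction on $p$, and the main obstacle.} The upper bound $p<n^2+n-2$ should enter when the gain from Step 2 is balanced against the trivial estimate at the low-multiplicity points. At those points we use $\mu\le R_1^{1-\delta}$ directly, and the exponent bookkeeping closes only below this threshold, which presumably coincides with the endpoint of the fractal decoupling exponent for dimension-$n$ sets in $\R^{n+1}$. The hardest step is the incidence estimate in Step 2. We must show that high-multiplicity intersections of the anisotropic slabs have small $n$-dimensional Katz--Tao content, uniformly at every intermediate scale, so that the gain is not lost when it is propagated through the induction. We would first attempt this using a multilinear Kakeya or Brascamp--Lieb inequality for the $n+1$ transverse families of slabs, restricted to $E$. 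The Katz--Tao condition, applied at scale $R_1^{n-1}$, would then convert multilinear smallness into a measure deficit.
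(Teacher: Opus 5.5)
\textbf{There is a genuine gap.} The step that is supposed to produce the gain, the incidence estimate in Step~2, is not proved. The saving you aim for there is false in general.

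\emph{A counterexample to Step~2.} Suppose $|F_\theta|\sim 1$ on all of $B_R^{(n+1)}$ for every $\theta$. This is compatible with $\operatorname{supp}\widehat{F_\theta}\subset\Pi_R^\varsigma(\theta)$. Then $\mu\sim R^{1/n}$ everywhere, so $E\cap\{\mu\gtrsim R_1^{1-\delta}\}=E\cap B_R$. High-multiplicity regions therefore need not lie near $(n-1)$-dimensional sets, and $E$ may sit entirely inside them.

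\emph{A structural problem.} The broad-part machinery (multilinear restriction at scale $R^{1/n}$ plus local $L^2$ orthogonality) controls the broad function only through $L^2$ averages over whole $R^{1/n}$-cubes. It never controls it through a square function evaluated on $E$. Once you pass to those averages, the information about $E$ is gone. So a Kakeya-type count of slab multiplicities on $E$ is not the quantity that needs to be bounded.

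\emph{Step~3.} Your account of where $p<n^2+n-2$ enters is a guess, and it does not match the actual mechanism.

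\textbf{The missing idea} is to use the Katz--Tao hypothesis \emph{before} passing to $L^2$. It should be applied at scale $R^{1/n}$, not $R_1^{n-1}$. That is the scale on which the multilinear estimate and local orthogonality for arcs of length $R^{-1/n}$ hold.
\begin{itemize}
\item For every $R^{1/n}$-cube $Q$ one has $|E_R\cap Q|/|Q|\lesssim R^{-1/n}$.
\item For $p<n+1$, H\"older bounds $\|\prod_j|F_{I_j}|^{1/(n+1)}\|_{L^p(E_R\cap Q)}$ by $|E_R\cap Q|^{\frac1p-\frac1{n+1}}$ times the $L^{n+1}(Q)$ norm.
\item Then apply the rescaled multilinear restriction estimate (Brascamp--Lieb with stability), local orthogonality, and H\"older over the $\sim R^{1/n}$ arcs. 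These give exactly the factor $(R^{1/n})^{\frac12-\frac2p}$.
\item The density bound supplies the extra factor $(R^{1/n})^{-(\frac1p-\frac1{n+1})}$.
\end{itemize}

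\textbf{The narrow part} needs neither induction nor a rescaled copy of $E$. Rescaling $E$ is awkward in any case: $\mathsf A_J$ squashes unit cubes into $1\times\rho\times\cdots\times\rho^n$ boxes, so the image is no longer a union of unit cubes. The paper simply discards $E$ in the narrow term. It applies the full-space Gan--Maldague--Oh $\ell^{p/2}$ estimate at scale $R\delta_{\rm nar}^n\sim R^{1-\varepsilon_{\rm bn}^2}$, which already gains a power of $R$.

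\textbf{The range $n+1\le p<n^2+n-2$.} Here the H\"older step is unavailable, and your scheme has no mechanism for it. The paper interpolates between two estimates:
\begin{itemize}
\item the gain at some $p_\#\in(4,n+1)$;
\item the Gan--Maldague--Oh estimate at $p_n=n^2+n-2$, the endpoint of its validity, where there is no gain.
\end{itemize}
That interpolation is the only source of the upper bound on $p$.
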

The weight $W_{B_R^{(n+1)}}$ is as defined below, to be $\sim1$ on $B_R^{(n+1)}$ and decaying outside of it.

\subsection{Structure of the paper}
The proof has the following structure. In Section~\ref{sec:maxred}, we deduce Theorem~\ref{0524.thm51} from the frequency-localized estimate in Theorem~\ref{0525.thm13}. To prove Theorem~\ref{0525.thm13}, we first write
\begin{equation}
 A_tf= ( \hat{f}(\xi) \widehat{ d\sigma_{\gamma} }(t\xi) )^{\vee}
\end{equation}
where $d\sigma_{\gamma}$ is a weighted curve measure on $\gamma$. Gan-Maldague-Oh \cite{gan2025sharplocalsmoothingestimates} introduced a way of analyzing $\widehat{d\sigma}_{\gamma}$. In Section~\ref{sec:wave-envelope-fractal}, we follow their argument, reduce Theorem~\ref{0525.thm13} to the plate estimate in Theorem~\ref{0524.thm53}, and derive the latter from Theorem~\ref{0524.thm54}.

In Section~\ref{sec:prelim}, we localize the curve and change coordinates so that each piece is a small perturbation of the moment curve, and record necessary estimates to prove Theorem~\ref{0524.thm54}. In Section~\ref{sec:multilin-restrict}, we prove the multilinear restriction estimate for functions Fourier-supported on the plates $\{\Pi_R^{\varsigma}(\theta) \}_{\theta}$. In Section~\ref{sec:broad-narrow}, we prove Theorem~\ref{0524.thm54}, combining the broad-narrow analysis of \cite{MR2860188} with the multilinear restriction estimate.

\subsection{Notation}
The $\varepsilon$-neighborhood of a set $X$ is defined as $N_\varepsilon(X):=\{x:\operatorname{dist}(x,X)\leq\varepsilon\}$. For a ball or a box $U$, denote by $CU$ the concentric dilation of $U$ by a factor $C$. We write $A\lesssim B$ if $A=O(B)$, and write $A\lesssim_{\vec\alpha}B$ if the implicit constant depends on the parameters $\vec\alpha$. If $A\lesssim B$ and $B\lesssim A$, then we write $A\sim B$.

For $m\in\mathbb Z_{>0}$ and $\delta>0$, let $\mathcal D_\delta^{(m)}:=\left\{k+ [0,\delta)^m:k\in(\delta\mathbb Z)^m\right\}$. These half-open cubes partition $\R^m$, have volume $\delta^m$, and have diameter comparable to $\delta$. For $F\subset\R^m$, write
\begin{equation}
 \mathcal D_\delta^{(m)}(F):=\{Q\in\mathcal D_\delta^{(m)}:Q\cap F\neq\varnothing\},
 \qquad |F|_\delta:=\#\mathcal D_\delta^{(m)}(F).
\end{equation}
We write $\mathcal D_\delta=\mathcal D_\delta^{(m)}$ if the ambient dimension is unambiguous.

For $m\in\mathbb Z_{>0}$, define the weight $W^{(m)}:\mathbb{R}^m \rightarrow \mathbb{R}$ by
\begin{equation}
 W^{(m)}(x)=(1+|x|)^{-100m}.
\end{equation}

Let $B_1^{(m)}$ be the Euclidean unit ball and let $Q_1^{(m)}=[-1,1]^m$. For every ball $U$, fix an invertible affine map $T_U$ satisfying $T_U(B_1^{(m)})=U$; for every affine box $U$, fix one satisfying $T_U(Q_1^{(m)})=U$. Define
\begin{equation}
W_U^{(m)}(x):=W^{(m)}(T_U^{-1}(x)),
\qquad
w_U^{(m)}:=\frac{W_U^{(m)}}{\int W_U^{(m)}}.
\end{equation}
We usually suppress the superscript $m$.

For a nonnegative weight $w$, write
\begin{equation}
 \|F\|_{L^p(w)}:=\left(\int|F(x)|^p w(x)\,dx\right)^{1/p}.
\end{equation}
For a set $U$ of finite positive measure, write $\|F\|_{L^p_\#(U)}:=|U|^{-1/p}\|F\|_{L^p(U)}$.

For functions $f: \mathbb{R}^n \rightarrow \mathbb{C}$ with variables $x \in \mathbb{R}^n$, define $\mathcal{F}_x(f)(\xi)$ to be the Fourier transform. For functions $F:\mathbb{R}^n \times \mathbb{R} \rightarrow \mathbb{C}$ with variables $(x,t) \in \mathbb{R}^n \times \mathbb{R}$, define $\mathcal{F}_{x,t}F(\xi,\tau)$ to be the full Fourier transform. The corresponding inverse transforms are denoted by $\mathcal F_\xi^{-1}$ and $\mathcal F_{\xi,\tau}^{-1}$, respectively. We simply denote the transforms by $\hat{\cdot}$ and $(\cdot)^\vee$ when the transformed variables are clear.

\subsection{Tool and computational resource disclosure}
During the development of this work, the authors used ChatGPT (OpenAI, GPT-5.5, 5.6) to explore possible proof strategies. All mathematical statements, proofs, and references appearing in the final manuscript were independently checked and verified by the authors, who take full responsibility for their correctness.

\subsection{Acknowledgements}

Changkeun Oh was supported by the POSCO Science Fellowship of POSCO TJ Park Foundation, and the National
Research Foundation of Korea (NRF) grant funded by the Korea government (MSIT) RS-2024-00341891.

\section{Theorem~\ref{0525.thm13} implies Theorem~\ref{0524.thm51} }\label{sec:maxred}

We reduce the maximal bound to a sum over frequency-localized pieces using Littlewood--Paley decomposition. We linearize the maximal operator using Bernstein's inequality and apply Theorem~\ref{0525.thm13}.

\subsection{Reduction to the local maximal estimate}

For an interval $I\subset(0,\infty)$, write $\mathscr M_I f:=\sup_{t\in I}|A_t f|$, and for $j\in\mathbb Z$ write $\mathscr M_j:=\mathscr M_{[2^j,2^{j+1}]}$. After decreasing the positive constant $\epsilon_0$ in Theorems \ref{0524.thm51} and \ref{0525.thm13}, if necessary, we may assume $0<\epsilon_0\leq1$. This only weakens both assertions, and it ensures that $p>2n-5\geq5$, so in particular $p\geq2$.

\begin{lemma}[Local-to-global reduction]\label{lem:local-to-global-maximal}
Let $2\leq p<\infty$. Suppose that there are constants $\sigma_{\rm dec}>0$ and $C_{\mathrm{loc}}<\infty$ such that
\begin{equation}\label{eq:local-annular-maximal}
\|\mathscr M_{[1,2]}g\|_{L^p(\mathbb R^n)}\leq C_{\mathrm{loc}}2^{-\sigma_{\rm dec}m}\|g\|_{L^p(\mathbb R^n)}
\end{equation}
whenever $m\geq1$ and $g\in\mathcal{S}(\R^n)$ satisfies $\operatorname{supp}\widehat g\subset B(0,2^{m+1})\setminus B(0,2^{m-1})$. Then
\begin{equation}\label{eq:global-from-local}
\|\mathscr M f\|_{L^p(\mathbb R^n)}\lesssim_{p,\sigma_{\rm dec},C_{\mathrm{loc}}}\|f\|_{L^p(\mathbb R^n)}.
\end{equation}
\end{lemma}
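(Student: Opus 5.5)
We will prove Lemma~\ref{lem:local-to-global-maximal} by the standard Littlewood--Paley argument from the spherical/helical maximal literature. Fix a Littlewood--Paley partition $1=\widehat{\phi_0}(\xi)+\sum_{m\ge1}\widehat{\phi_m}(\xi)$, with $\widehat{\phi_0}$ supported in $B(0,2)$ and $\widehat{\phi_m}$ supported in $B(0,2^{m+1})\setminus B(0,2^{m-1})$. Write $P_m f=\phi_m*f$. Since $\mathscr Mf\le\sup_j \mathscr M_jf$, we split $f=P_0f+\sum_{m\ge1}P_mf$ \emph{at the scale of each dyadic dilation}. Concretely, we set $f_{j,m}:=P_{m-j}f$, i.e.\ frequencies $\sim 2^{m-j}$, so that $A_tf_{j,m}$ with $t\sim 2^j$ behaves like frequency $2^m$ at unit scale. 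Then
\[
\mathscr Mf\le \sup_j \mathscr M_j\Big(\sum_{\ell\le -j}P_\ell f\Big)+\sum_{m\ge1}\sup_j\mathscr M_j(P_{m-j}f),
\]
using a low-pass piece $\sum_{\ell\le -j}P_\ell f=\Phi_{2^{j}}*f$ for each $j$.

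\textbf{Low-frequency term.} For $t\in[2^j,2^{j+1}]$, the kernel of $A_t\Phi_{2^j}$ is $2^{-jn}K_t(2^{-j}\cdot)$, where $K_t$ is a Schwartz function with bounds uniform in $t/2^j\in[1,2]$. Indeed it is a curve measure convolved with a unit-scale bump. Hence $\sup_j\sup_{t\sim2^j}|A_t\Phi_{2^j}*f|\lesssim M_{HL}f$, which is bounded on $L^p$ for $p>1$.

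\textbf{High-frequency terms.} Fix $m\ge1$. We bound $\sup_j\mathscr M_j(P_{m-j}f)$ by the $\ell^p$ sum over $j$, since $\sup_j|a_j|\le(\sum_j|a_j|^p)^{1/p}$. By scaling $x\mapsto 2^jx$, the hypothesis \eqref{eq:local-annular-maximal} transfers to $\|\mathscr M_j g\|_p\le C_{\rm loc}2^{-\sigma_{\rm dec}m}\|g\|_p$ whenever $\widehat g$ is supported at $|\xi|\sim2^{m-j}$. The dilation $g\mapsto g(2^j\cdot)$ moves frequency $2^{m-j}$ to $2^m$ and conjugates $\mathscr M_j$ to $\mathscr M_{[1,2]}$, with matching $L^p$ normalizations. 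We therefore get
\[
\Big\|\sup_j\mathscr M_j(P_{m-j}f)\Big\|_p^p\le C_{\rm loc}^p2^{-\sigma_{\rm dec}mp}\sum_j\|P_{m-j}f\|_p^p .
\]
To close, we need $\sum_\ell\|P_\ell f\|_p^p\lesssim\|f\|_p^p$ for $p\ge2$. This follows from Littlewood--Paley, $\|(\sum_\ell|P_\ell f|^2)^{1/2}\|_p\lesssim\|f\|_p$, together with $\ell^2\subset\ell^p$ pointwise. Summing the geometric series in $m$ yields $\sum_m 2^{-\sigma_{\rm dec}m}C_{\rm loc}\|f\|_p$, which gives \eqref{eq:global-from-local} for Schwartz $f$. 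The general case follows by density, after first restricting to a finite range of $t$ and then applying monotone convergence.

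\textbf{Main obstacle.} The only delicate point is the bookkeeping of scaling. One must check that the rescaled pieces really fall in the annulus required by \eqref{eq:local-annular-maximal}, and that the low-pass remainder is uniformly dominated by the Hardy--Littlewood maximal function. The latter needs the rapid decay of $\Phi$ convolved with the compactly supported curve measure, uniformly in $t\in[1,2]$. The use of $p\ge2$ enters exactly in the $\ell^p$ Littlewood--Paley step.
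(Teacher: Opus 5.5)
Your proposal is correct and follows the paper's proof essentially step for step. Both arguments use the same relative decomposition $f=\Delta_{\le -j}f+\sum_{m\ge1}\Delta_{m-j}f$ at each dyadic scale $2^j$. Both dominate the low-pass part by the Hardy--Littlewood maximal function through the rescaled kernel $2^{-jn}\int\rho(2^{-j}y-u\gamma(s))\psi(s)\,ds$. For each fixed $m$, both bound $\sup_j$ by the $\ell^p_j$ sum, transfer the hypothesis by rescaling, and close with the Littlewood--Paley square function and $\ell^2\subset\ell^p$ (this is where $p\ge2$ enters), before summing the geometric series in $m$.

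The only slip is notational. You announce an inhomogeneous partition $\widehat{\phi_0}+\sum_{m\ge1}\widehat{\phi_m}$, but you then use $P_{m-j}$ for all $j\in\mathbb Z$ and the identity $\sum_{\ell\le -j}P_\ell f=\Phi_{2^j}*f$. You should therefore state the homogeneous decomposition indexed by $\ell\in\mathbb Z$ from the start, as the paper does.
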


\begin{proof}
Choose a radial function $\varphi_{\rm LP}\in C_c^\infty(\mathbb R^n)$ such that $\varphi_{\rm LP}\equiv1$ on $B(0,1)$ and $\operatorname{supp}\varphi_{\rm LP}\subset B(0,2)$. For $\ell\in\mathbb Z$, define the Fourier projection operators $\Delta_\ell$ and $\Delta_{\leq\ell}$ by
\begin{align}
\widehat{\Delta_\ell f}(\xi)&:=\big(\varphi_{\rm LP}(2^{-\ell}\xi)-\varphi_{\rm LP}(2^{-(\ell-1)}\xi)\big)\widehat f(\xi),\label{eq:homogeneous-delta}\\
\widehat{\Delta_{\leq\ell} f}(\xi)&:=\varphi_{\rm LP}(2^{-\ell}\xi)\widehat f(\xi).\label{eq:homogeneous-low-pass}
\end{align}
Then $\operatorname{supp}\widehat{\Delta_\ell f}\subset\{2^{\ell-1}\leq|\xi|\leq2^{\ell+1}\}$, and for every $j\in\mathbb Z$ one has
\begin{equation}\label{eq:relative-frequency-decomposition}
f=\Delta_{\leq -j}f+\sum_{m=1}^\infty\Delta_{m-j}f.
\end{equation}

Let $\rho=\check\varphi_{\rm LP}$. For $t=2^ju$ with $1\leq u\leq2$, Fubini's theorem implies
\begin{equation}\label{eq:low-relative-kernel}
A_{2^ju}\Delta_{\leq -j}f(x)=f*K_{j,u}(x),\qquad K_{j,u}(y)=2^{-jn}\int\rho(2^{-j}y-u\gamma(s))\psi(s)\,ds.
\end{equation}
Since $\gamma$ is bounded on $\operatorname{supp}\psi$, for every $N>0$ one has, uniformly in $j\in\mathbb Z$ and $u\in[1,2]$,
\begin{equation}\label{eq:low-relative-kernel-decay}
|K_{j,u}(y)|\lesssim_N2^{-jn}(1+2^{-j}|y|)^{-N}.
\end{equation}
Taking $N>n$ and decomposing this radial majorant into dyadic annuli yields
\begin{equation}\label{eq:low-relative-HL}
\sup_{j\in\mathbb Z}\mathscr M_j(\Delta_{\leq -j}f)(x)\lesssim\mathscr M_{\mathrm{HL}}f(x),
\end{equation}
where $\mathscr M_{\mathrm{HL}}$ denotes the Hardy--Littlewood maximal operator.

For each $m\geq1$, define
\begin{equation}
G_m(x):=\sup_{j\in\mathbb Z}\mathscr M_j(\Delta_{m-j}f)(x).
\end{equation}
For each $j$, set $g_{j,m}(y):=(\Delta_{m-j}f)(2^jy)$. Its Fourier support is contained in $\{2^{m-1}\leq|\xi|\leq2^{m+1}\}$, and changing variables gives
\begin{equation}
\mathscr M_j(\Delta_{m-j}f)(2^jy)=\mathscr M_{[1,2]}g_{j,m}(y).
\end{equation}
Consequently, \eqref{eq:local-annular-maximal} and a change of variables imply
\begin{equation}\label{eq:scaled-local-annular}
\|\mathscr M_j(\Delta_{m-j}f)\|_{L^p}\lesssim2^{-\sigma_{\rm dec}m}\|\Delta_{m-j}f\|_{L^p},
\end{equation}
uniformly in $j$. Since $\sup_j|h_j|^p\leq\sum_j|h_j|^p$, it follows that
\begin{equation}\label{eq:Gm-first-bound}
\|G_m\|_{L^p}^p\leq\sum_{j\in\mathbb Z}\|\mathscr M_j(\Delta_{m-j}f)\|_{L^p}^p\lesssim2^{-\sigma_{\rm dec}mp}\sum_{\ell\in\mathbb Z}\|\Delta_\ell f\|_{L^p}^p.
\end{equation}
Since $p\geq2$,
\begin{equation}
\sum_{\ell\in\mathbb Z}|\Delta_\ell f|^p\leq\left(\sum_{\ell\in\mathbb Z}|\Delta_\ell f|^2\right)^{p/2}.
\end{equation}
The homogeneous Littlewood--Paley square-function inequality therefore gives
\begin{equation}\label{eq:annular-lp-sum}
\sum_{\ell\in\mathbb Z}\|\Delta_\ell f\|_{L^p}^p\leq\left\|\left(\sum_{\ell\in\mathbb Z}|\Delta_\ell f|^2\right)^{1/2}\right\|_{L^p}^p\lesssim_p\|f\|_{L^p}^p.
\end{equation}
Combining \eqref{eq:Gm-first-bound} and \eqref{eq:annular-lp-sum}, we obtain
\begin{equation}\label{eq:Gm-decay}
\|G_m\|_{L^p}\lesssim2^{-\sigma_{\rm dec}m}\|f\|_{L^p}.
\end{equation}

Apply $\mathscr M_j$ to \eqref{eq:relative-frequency-decomposition}, use sublinearity, and then take the supremum over $j$. By \eqref{eq:low-relative-HL} and \eqref{eq:Gm-decay}, we have
\begin{align}
\mathscr M f(x)&\leq\sup_{j\in\mathbb Z}\mathscr M_j(\Delta_{\leq -j}f)(x)+\sum_{m=1}^\infty G_m(x),\notag\\
\|\mathscr M f\|_{L^p}&\lesssim\|\mathscr M_{\mathrm{HL}}f\|_{L^p}+\sum_{m=1}^\infty2^{-\sigma_{\rm dec}m}\|f\|_{L^p}\lesssim\|f\|_{L^p}.
\end{align}
This proves \eqref{eq:global-from-local} for Schwartz functions, which extends to all $f\in L^p(\mathbb R^n)$ by density and sublinearity.
\end{proof}

By Lemma~\ref{lem:local-to-global-maximal}, it remains to prove \eqref{eq:local-annular-maximal}. Fix $k\geq1$ and put $R=2^k$. Let $f_k\in\mathcal S(\mathbb R^n)$ satisfy $\operatorname{supp}\widehat{f_k}\subset\{\xi:R/2\leq|\xi|\leq2R\}$. It suffices to prove that
\begin{equation}\label{eq:local-frequency-decay-section6}
\|\mathscr M_{[1,2]}f_k\|_{L^p(\mathbb R^n)}\lesssim R^{-\epsilon}\|f_k\|_{L^p(\mathbb R^n)},
\end{equation}
which shows \eqref{eq:local-annular-maximal} with $m=k$ and $\sigma_{\rm dec}=\epsilon$.

Choose a Schwartz function $\chi \in \mathcal{S}(\R)$ such that $|\chi(t)| \ge 1$ for $t \in [1, 2]$ and $\operatorname{supp}\hat\chi\subseteq[-C_\chi,C_\chi]$ for some constant $C_{\chi}>0$. Define
\begin{equation}
 \mathcal H_k(x,t) = A_t f_k(x) \chi(t).
\end{equation}
Since $|\chi(t)| \ge 1$ on the interval $[1,2]$, we have the following pointwise bound:
\begin{equation}
 \mathscr M_{[1,2]}f_k(x) = \sup_{1 \le t \le 2} |A_t f_k(x)| \le \sup_{1 \le t \le 2} |A_t f_k(x) \chi(t)| \le \sup_{t \in \R} |\mathcal H_k(x,t)|.
\end{equation}

\subsection{Linearizing the maximal operator}
Note that
\begin{align}
 \widehat{\mathcal H_k}(\xi, \tau) &= \int_{\R^n} \int_{\R} \left( \int f_k(x - t\gamma(s)) \psi(s)\,ds \right) \chi(t) e^{-i(x \cdot \xi + t \tau)} \,dt \,dx \\
 &= \widehat{f_k}(\xi) \int \psi(s)\widehat{\chi}(\tau + \gamma(s) \cdot \xi) \,ds.
\end{align}
If $\widehat{\mathcal H_k}(\xi,\tau)\neq0$, then $\widehat{f_k}(\xi)\neq0$ and $\widehat{\chi}(\tau+\gamma(s)\cdot\xi)\neq0$ for some $s\in \operatorname{supp}\psi$. Since $\operatorname{supp}\hat\chi\subseteq[-C_\chi,C_\chi]\subseteq[-C_\chi R,C_\chi R]$, it follows that
\begin{equation}\label{eq:supp-Hk}
\operatorname{supp}\widehat{\mathcal H_k}\subseteq\left\{(\xi,\tau):|\xi|\leq 2R,\ |\tau| \le C_{\chi} + C_\gamma|\xi| \le (C_{\chi} + 2C_\gamma) R\right\}
\end{equation}
whenever $R\geq1$, where $C_\gamma:=\sup_{s\in\operatorname{supp}\psi}|\gamma(s)|$.

We use \eqref{eq:supp-Hk} to linearize the maximal operator using the discretization in \cite[Lemma 3.2]{liwu2024maxBR}.

\begin{proposition}\label{prop:linear}
There exists a Katz--Tao $(R^{-1}, n, C)$-set $E \subset \R^{n}\times[1,2]$ such that
\begin{equation}
 \|\mathscr M_{[1,2]}f_k\|_{L^p(\R^n)}^p \lesssim R \int_{\R^{n+1}} |\mathcal H_k(y,t')|^p (\mathbf{1}_E * \Psi_R)(y,t') \,dy \,dt',
\end{equation}
where $\Psi_R(y,t') = R^{n+1} \Psi(Ry, Rt')$ and $\Psi(y, t') = (1 + |y|)^{-N_{\rm dec}}(1 + |t'|)^{-N_{\rm dec}}$ for a large integer $N_{\rm dec}>n+1$ to be chosen below.
\end{proposition}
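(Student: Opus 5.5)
We follow the discretization of \cite[Lemma 3.2]{liwu2024maxBR}: first make $\mathcal H_k$ essentially constant at scale $R^{-1}$, then linearize the supremum by a measurable selection of times, and finally read off the Katz--Tao property from the fact that the selected set is a graph over $x$.

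\emph{Step 1: local constancy.} By \eqref{eq:supp-Hk}, $\widehat{\mathcal H_k}$ is supported in a ball of radius $C'R$. Choose a Schwartz function $\phi$ with $\widehat\phi\equiv1$ on $B(0,C')$ and put $\phi_R=R^{n+1}\phi(R\,\cdot)$. Then
\[
\mathcal H_k=\mathcal H_k*\phi_R .
\]
Using $|\phi_R|\lesssim_N \Psi_R$ for $N=N_{\rm dec}$, and then Hölder's inequality against the probability-like measure $\Psi_R\,dy\,dt'$ (which has bounded total mass), we get for every $(x,t)$
\[
|\mathcal H_k(x,t)|^p\lesssim \int |\mathcal H_k(y,t')|^p\,\Psi_R(x-y,t-t')\,dy\,dt'.
\]

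\emph{Step 2: linearization.} Since $\mathscr M_{[1,2]}f_k(x)\le\sup_{t\in[1,2]}|\mathcal H_k(x,t)|$ and $\mathcal H_k$ is continuous, there is a measurable function $t(x)\in[1,2]$ with $\mathscr M_{[1,2]}f_k(x)\le 2|\mathcal H_k(x,t(x))|$. The choice is needed only where $\mathscr M f_k\neq 0$; elsewhere any value works. Partition $\R^n$ into cubes $q\in\mathcal D_{R^{-1}}^{(n)}$.

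On each $q$ the $x$-dependence of the bound in Step 1 is harmless: shifting $x$ by at most $R^{-1}$ changes $\Psi_R$ by only a constant factor. So we may replace $t(x)$ by a single value $t_q$ and $x$ by the center $c_q$, at the cost of a constant. This requires choosing $t_q$ carefully. We take $t_q$ to be a time that nearly maximizes the average over $q$, for instance by applying the pointwise bound at the point where the supremum is essentially attained and then re-centering. We then define $E$ as the union of the cubes $q\times[t_q,t_q+R^{-1})$.

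Integrating gives
\[
\|\mathscr M_{[1,2]}f_k\|_p^p=\sum_q\int_q\lesssim \sum_q R^{-n}\int|\mathcal H_k|^p\,\Psi_R(c_q-y,t_q-t')\,dy\,dt'.
\]
Moreover $R^{-n}\Psi_R(c_q-\cdot)\lesssim R\,(\mathbf 1_{Q_q}*\Psi_R)$, where $Q_q$ is the $(n+1)$-cube of side $R^{-1}$ at $(c_q,t_q)$. Summing over $q$ yields the claimed bound with the factor $R$, since $\sum_q \mathbf 1_{Q_q}=\mathbf 1_E$.

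\emph{Step 3: Katz--Tao property.} $E$ is a union of $R^{-1}$-cubes with exactly one cube above each $q$. Hence any cube $Q_r$ of side $r\ge R^{-1}$ meets at most $(rR)^n$ columns, and so $|E\cap Q_r|_{R^{-1}}\lesssim (r/R^{-1})^n$. Also $E\subset\R^n\times[1,2]$ after trimming $t_q$ slightly.

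\emph{Main obstacle.} The delicate step is Step 2: replacing the pointwise supremum, whose maximizing time varies with $x$, by a single time per $R^{-1}$-cube. I will handle this by not freezing $t(x)$ itself. Instead, for each $x\in q$ I use the bound from Step 1 at $(x,t(x))$, and discretize $t(x)$ to the grid $R^{-1}\mathbb Z$, on which $\Psi_R$ is again stable. This gives finitely many candidate cubes above $q$. If several values occur, I pick $t_q$ among them to maximize the corresponding integral, which dominates the average over $q$. This argument needs only the stability of $\Psi_R$ under $R^{-1}$-shifts, and I expect it to be the part requiring care.
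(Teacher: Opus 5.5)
Your proof is correct and is essentially the paper's Li--Wu discretization. The paper chooses, for each $R^{-1}$-cube $Q$, the interval $I_{j(Q)}$ that maximizes $\sup_{Q\times I_j}|\mathcal H_k|$ and then applies Bernstein's inequality. You instead apply the local-constancy bound first and then choose $t_q\in R^{-1}\mathbb Z$ to maximize $\int|\mathcal H_k(y,t')|^p\Psi_R(c_q-y,t_q-t')\,dy\,dt'$; this quantity dominates $\mathscr M_{[1,2]}f_k(x)^p$ for every $x\in q$ at once, and the Katz--Tao count is then identical. The selection that actually makes your argument work is the max-over-grid one in your last paragraph, so the vaguer ``nearly maximizes the average'' description and the measurable choice of $t(x)$ can be dropped.
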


\begin{proof}
We partition $\R^n$ into a tiling $\mathcal{Q}$ of mutually disjoint cubes $Q$ of side length $R^{-1}$. Also partition the interval $[1,2]$ into subintervals $I_j = [t_j, t_{j+1})$ of length $R^{-1}$. For each cube $Q \in \mathcal{Q}$, let $j(Q)$ be the smallest index that attains the maximum; that is,
\begin{equation}
 \max_{j} \sup_{(x,t) \in Q \times I_j} |\mathcal H_k(x,t)| = \sup_{(x,t) \in Q \times I_{j(Q)}} |\mathcal H_k(x,t)|.
\end{equation}
We group the cubes by defining $F_j = \bigcup_{Q: j(Q) = j} Q$.

For any $x \in Q \subset F_j$, the choice of $j(Q)$ yields
\begin{equation}\label{MtoHLp}
 |\mathscr M_{[1,2]}f_k(x)|^p \le \sup_{t \in [1,2]} |\mathcal H_k(x,t)|^p\le \sup_{(y,t) \in Q \times I_j} |\mathcal H_k(y,t)|^p.
\end{equation}
Let $x_Q$ denote the center of $Q$. It follows from \eqref{eq:supp-Hk} and Bernstein's inequality that
\begin{equation}\label{AppBernstein}
 \sup_{(y,t) \in Q \times I_j} |\mathcal H_k(y,t)|^p \lesssim R^{n+1} \int_{\R^{n+1}} |\mathcal H_k(y,t')|^p \Psi(R(x_Q - y), R(t_j - t')) \,dy \,dt'.
\end{equation}
Since $x \in Q$, we have $R|x - x_Q| \lesssim 1$ which implies $\Psi(R(x_Q - y), R(t_j - t')) \sim \Psi(R(x - y), R(t_j - t'))$. Therefore, \eqref{MtoHLp} and \eqref{AppBernstein} give
\begin{equation}
 |\mathscr M_{[1,2]}f_k(x)|^p \lesssim R^{n+1} \int_{\R^{n+1}} |\mathcal H_k(y,t')|^p \Psi(R(x - y), R(t_j - t')) \,dy \,dt'.
\end{equation}
Integrating over $x \in Q$ and summing over all cubes $Q \subset F_j$, we obtain
\begin{equation}\label{0701.637}
\begin{aligned}
 &\int_{F_j} |\mathscr M_{[1,2]}f_k(x)|^p \,dx\\
 &\qquad\lesssim R \int_{\R^{n+1}} |\mathcal H_k(y,t')|^p \left( \int_{F_j} R^n \Psi_1(R(x-y)) \,dx \right) \Psi_2(R(t_j-t')) \,dy \,dt',
\end{aligned}
\end{equation}
where we split $\Psi(y, t') = \Psi_1(y) \Psi_2(t')$ with $\Psi_1(y)=(1+|y|)^{-N_{\rm dec}}$ and $\Psi_2(t')=(1+|t'|)^{-N_{\rm dec}}$. Since
\begin{equation}
\Psi_2(R(t_j-t'))\lesssim R\int_{I_{j}}\Psi_2(R(t''-t'))\,dt''
\end{equation}
\eqref{0701.637} becomes
\begin{equation}
 \int_{F_j} |\mathscr M_{[1,2]}f_k(x)|^p \,dx \lesssim R \int_{\R^{n+1}} |\mathcal H_k(y,t')|^p (\mathbf{1}_{F_j \times I_j} * \Psi_R)(y,t') \,dy \,dt'.
\end{equation}
Summing over $j$ gives
\begin{equation}\label{eq:appendix-linearization}
 \|\mathscr M_{[1,2]}f_k\|_{L^p(\R^n)}^p \lesssim R \int_{\R^{n+1}} |\mathcal H_k(y,t')|^p (\mathbf{1}_E * \Psi_R)(y,t') \,dy \,dt',
\end{equation}
where the set $E \subset \R^{n}\times[1,2]$ is defined by
\begin{equation}\label{eq:E}
 E = \bigcup_j (F_j \times I_j).
\end{equation}

Each $R^{-1}$-cube $Q$ in $\R^n$ contributes at most one $R^{-1}$-cube $Q\times I_{j(Q)}$ constituting $E$. It thus follows from \eqref{eq:E} that for any $r$-cube $Q_r \subset \R^{n+1}$ of side length $r \ge R^{-1}$,
\begin{equation}
 |E\cap Q_r|_{R^{-1}}\lesssim|\operatorname{proj}_{\R^n}(E\cap Q_r)|_{R^{-1}}\leq|\operatorname{proj}_{\R^n}( Q_r)|_{R^{-1}}\lesssim(r/R^{-1})^n.
\end{equation}
This confirms that $E$ is a Katz--Tao $(R^{-1}, n, C)$-set in $\R^{n+1}$.
\end{proof}

\subsection{Applying Theorem~\ref{0525.thm13}}
Expanding the convolution in \eqref{eq:appendix-linearization} and translating $E$ gives
\begin{equation}
\|\mathscr M_{[1,2]}f_k\|_{L^p}^p
\lesssim
R\int_{\mathbb R^{n+1}}\Psi_R(u,v)
\int_{E+(u,v)}|\mathcal H_k(y,t')|^p\,dy\,dt'\,du\,dv.
\end{equation}
If $|v|\leq1/4$, then $E+(u,v)\subset\mathbb R^n\times[3/4,9/4]$. We may replace the interval $[1,2]$ in Theorem~\ref{0525.thm13} by $[3/4,9/4]$, via a finite partition and fixed rescalings in $t$. It follows that
\begin{equation}
\int_{E+(u,v)}|\mathcal H_k|^p
\lesssim
R^{-(1+p\epsilon)}\|f_k\|_{L^p}^p.
\end{equation}
If $|v|>1/4$, the uniform $L^p$ boundedness of $A_t$ gives the bound
\begin{equation}
\int_{E+(u,v)}|\mathcal H_k|^p\lesssim\|f_k\|_{L^p}^p.
\end{equation}
Choosing $N_{\rm dec}>\max\{n+1,2+p\epsilon\}$, we have
\begin{equation}
\int_{\mathbb R^n}\int_{|v|>1/4}\Psi_R(u,v)\,dv\,du\lesssim R^{1-N_{\rm dec}}.
\end{equation}
Consequently,
\begin{equation}
\|\mathscr M_{[1,2]}f_k\|_{L^p}^p
\lesssim
\bigl(R^{-p\epsilon}+R^{2-N_{\rm dec}}\bigr)\|f_k\|_{L^p}^p
\lesssim
R^{-p\epsilon}\|f_k\|_{L^p}^p.
\end{equation}
Taking the $p$-th root proves \eqref{eq:local-frequency-decay-section6}, and Lemma~\ref{lem:local-to-global-maximal} completes the proof.

\section{Reduction to Theorem~\ref{0524.thm54}}\label{sec:wave-envelope-fractal}

In this section, we deduce Theorem~\ref{0525.thm13} from Theorem~\ref{0524.thm54}.

We first define the multiscale parameters used in the plate reduction. Let $\vec\delta=(\delta_1,\ldots,\delta_{n-1})\in(0,1]^{n-1}$ have dyadic entries, and let $\vec\nu=(\nu_1,\ldots,\nu_{n-1})\in\{-1,0,1\}^{n-1}$. Set
\begin{equation}
\rho_0:=1,
\qquad
\rho_j:=\delta_1^j\delta_2^{j-1}\cdots\delta_j
\quad(1\leq j\leq n-1),
\end{equation}
and $\rho_n:=\delta_1^n\delta_2^{n-1}\cdots\delta_{n-1}^2$. We say that $(\vec\delta,\vec\nu)$ is admissible at scale $R$ if $\rho_n=R^{-1}$ and
\begin{equation}
\nu_i=0
\quad\Longleftrightarrow\quad
\delta_i=1
\qquad(1\leq i\leq n-1).
\end{equation}
We extend the sign vector by setting $\nu_n=\nu_{n+1}:=0$. We also put
\begin{equation}
\delta:=\Delta(\vec\delta):=\prod_{i=1}^{n-1}\delta_i.
\end{equation}

Throughout this section, $(\vec\delta,\vec\nu)$ is an admissible pair at scale $R$.

\subsection{Decomposition of frequencies}

In this subsection, we deduce Theorem~\ref{0525.thm13} from Theorem~\ref{0524.thm53}. We begin with the plate decomposition used in the reduction.

For $s\in(-1,1)$, set
\begin{equation}
v_1(s):=\frac{(\gamma(s),1)}{\sqrt{1+|\gamma(s)|^2}}\in\mathbb R^{n+1}.
\end{equation}
The nondegeneracy of $\gamma$ implies that $v_1(s),v_1'(s),\ldots,v_1^{(n)}(s)$ are linearly independent. Applying the Gram--Schmidt process in this order gives an orthonormal frame $v_1(s),\ldots,v_{n+1}(s)$. We use the reversed frame
\begin{equation}
e_i(s):=v_{n+2-i}(s),
\qquad
1\leq i\leq n+1.
\end{equation}
Note that up to sign, $e_1$ is the dual curve of $v_1$ and $(e_1(s),\ldots,e_{n+1}(s))$ is its Frenet frame.

Define
\begin{equation}
\begin{split}
 P[\vec{\delta},\vec \nu](s):=\left\{ \sum_{i=1}^{n+1} a_i e_i(s): \begin{cases}
 |a_i| \lesssim R \rho_{i-1} \;\; \text{ if } \nu_i=0 \\
 |a_i| \sim R \rho_{i-1},\; a_i>0 \;\; \text{ if } \nu_i=1
 \\
 |a_i| \sim R \rho_{i-1}, \; a_i<0 \;\; \text{ if } \nu_i=-1
 \end{cases} \right\},
\end{split}
\end{equation}
and a normalized plate
\begin{equation}
\begin{split}
 p[\vec{\delta},\vec\nu](s):=R^{-1}P[\vec{\delta},\vec \nu](s)=\left\{ \sum_{i=1}^{n+1} a_i e_i(s): \begin{cases}
 |a_i| \lesssim \rho_{i-1} &\text{ if } \nu_i=0, \\
 |a_i| \sim \rho_{i-1}, \; a_i>0 &\text{ if } \nu_i=1,
 \\
 |a_i| \sim \rho_{i-1}, \; a_i<0 &\text{ if } \nu_i=-1
 \end{cases} \right\}.
\end{split}
\end{equation}
To simplify the notation, we sometimes use $P[\vec{\delta}](s)$ and $p[\vec{\delta}](s)$ for $P[\vec{\delta},\vec\nu](s)$ and $p[\vec{\delta},\vec\nu](s)$, respectively. For $1\leq J\leq n-1$, let
\begin{equation}
\vec\delta_{(J)}
:=
(\delta_1,\ldots,\delta_J,1,\ldots,1),
\qquad
\vec \nu_{(J)}
:=
(\nu_1,\ldots,\nu_J,0,\ldots,0).
\end{equation}
We abbreviate $p[\vec\delta_{(J)},\vec\nu_{(J)}](s)$ by $p[\vec\delta_{(J)}](s)$.

For a rectangular box $P=\prod_{i=1}^{n+1}{[-l_i,l_i]}$, we say $\phi_P$ is adapted if
\begin{enumerate}
 \item $\phi_P$ is supported in a $C$-dilate of $P$.

 \item $|\partial^{\alpha} \phi_P(x_1,\ldots,x_{n+1})| \lesssim_{\alpha} \prod_{i=1}^{n+1} l_i^{-\alpha_i}$ holds for every multi-index $\alpha$.
\end{enumerate}
For a general rectangular box $P$, we apply an isometry to reduce to the above case. In our paper, when we use the notation $\phi_P$, the function is understood to be adapted to $P$.

We show the following fractal local smoothing estimate from Theorem~\ref{0524.thm54}.

\begin{theorem}\label{0524.thm53}
Let $n\ge5$, and $C>0$. Let $R\ge1$, and assume that $E\subset\mathbb R^n\times [R,2R]$ is a Katz--Tao $(1,n,C)$-set. There exists $\epsilon_0=\epsilon_0(n,C)>0$ such that the following holds: for every $p>2n-4-\epsilon_0$, there exists $\epsilon=\epsilon(p,n,C)>0$ such that
\begin{equation}
\Big\|
\sum_{s\in\mathbb I_\delta}
a_s
\mathcal F_{\xi,\tau}^{-1}
\left((\mathcal F_x f)(\xi)\phi_{p[\vec\delta,\vec\nu](s)}\right)
(\cdot-R\mathbf n_s)
\Big\|_{L^p(E)}
\lesssim_{p,n,C}
\delta^{-1}R^{-1-\epsilon}
\|f\|_{L^p(\mathbb R^n)}
\end{equation}
whenever $\operatorname{supp}\widehat f\subset B(0,2)$, $(\vec\delta,\vec\nu)$ is admissible at scale $R$, $|a_s|\lesssim1$, and
$\mathbf n_s\in\{u(\gamma(s),1):u\in[1,2]\}$.
\end{theorem}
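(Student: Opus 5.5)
We would prove Theorem~\ref{0524.thm53} by following the plate-reduction argument of Gan--Maldague--Oh \cite{gan2025sharplocalsmoothingestimates}, replacing their $\ell^{p/2}$-decoupling step on $B_R^{(n+1)}$ by the fractal estimate Theorem~\ref{0524.thm54}. The gain of $R^{-c/n}$ from Theorem~\ref{0524.thm54} is what pushes the admissible range below $2n-4$.

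\emph{Step 1: rescale so that Theorem~\ref{0524.thm54} applies.} Write $G_s:=\mathcal F^{-1}_{\xi,\tau}\big(\widehat f\,\phi_{p[\vec\delta,\vec\nu](s)}\big)(\cdot-R\mathbf n_s)$. Each $G_s$ is Fourier supported in the normalized plate $p[\vec\delta,\vec\nu](s)$, whose dimensions are $\rho_0,\rho_1,\ldots,\rho_n$ along $e_1(s),\ldots,e_{n+1}(s)$. As $s$ ranges over $\mathbb I_\delta$, these plates are $\delta$-separated pieces of a neighborhood of a cone over a nondegenerate curve. We will apply an anisotropic rescaling adapted to the parameters $\delta_1,\ldots,\delta_{n-1}$, in the spirit of the Gan--Maldague--Oh induction. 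The sign $\nu_i\neq0$ means $|a_i|\sim\rho_{i-1}$, and this lets us factor out the first $J$ directions. The aim is to convert the family $\{G_s\}$, after a Lorentz-type change of frame and a parabolic rescaling of $s$, into functions $F_\theta$ with $\operatorname{supp}\widehat{F_\theta}\subset\Pi^\varsigma_{R'}(\theta)$, $\theta\in\mathcal G_{R'^{-1/n}}$, at an effective scale $R'$ with $R'^{1/n}\sim\delta^{-1}$.

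\emph{Step 2: control the fractal set under the rescaling.} The affine map $T$ used in Step 1 is not an isometry, so $T(E)$ need not be a Katz--Tao $(1,n,C)$-set. We would cover $E$ by boxes dual to the rescaling, apply Theorem~\ref{0524.thm54} on each box, and sum. The count of unit cubes of $E$ inside such a box, compared with its volume, measures how much the set dimension $n$ is preserved after rescaling. If rescaling loses fractal structure in the directions with $\delta_i<1$, we would use that $E\subset\R^n\times[R,2R]$ projects well: it has at most $\lesssim(r)^n$ cubes in an $r$-cube. Estimating directly on the intersection with the boxes, we then use only the $n$-dimensional bound coming from $\ell^{p/2}$ in the non-rescaled variables.

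\emph{Step 3: finish from the $\ell^{p/2}$ sum.} After Theorem~\ref{0524.thm54} we have $\delta^{-(1/2-2/p-c)}\|(\sum_s|G_s|^{p/2})^{2/p}\|_{L^p(W)}$. We bound the $\ell^{p/2}$ sum by the $\ell^p$-type estimates used in \cite{gan2025sharplocalsmoothingestimates}: each $\|G_s\|_{L^p}$ is controlled by the kernel size of $\phi_{p[\vec\delta](s)}^\vee$, and the square-function/Kakeya-type bound for the pieces $\widehat f\phi_{p(s)}$ (which are essentially orthogonal in $\xi$) gives $\|(\sum_s|\cdot|^{p/2})^{2/p}\|_p\lesssim\delta^{a(p)}\|f\|_p$. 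Exponent bookkeeping as in GMO yields exactly $\delta^{-1}R^{-1}$ at $p=2n-4$, using $\rho_n=R^{-1}$ and $\delta\ge R^{-1/2}$-type relations. The extra $\delta^{c}$ gives $\delta^{-1}R^{-1}\delta^{c'}$. When $\delta$ is not a power of $R$ (e.g.\ $\delta\sim1$), the gain must instead come from the fractal integration itself, since $|E|\lesssim R^n$ rather than $R^{n+1}$ gives a factor $R^{-1/p}$ over the trivial $L^\infty$-$L^p$ bound. We then interpolate the two regimes, perturbing $p$ slightly below $2n-4$ and absorbing the loss by continuity of the exponents.

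\emph{Main obstacle.} We expect Step 2 to be hardest: verifying that the Katz--Tao dimension $n$ survives the anisotropic rescaling, so that Theorem~\ref{0524.thm54} applies with uniform $C$. We would first try to prove the dichotomy that either the rescaled set is Katz--Tao $(1,n,O(C))$, or the plates are thin enough that a trivial $L^2$/Bernstein bound on $E$ already gains a power of $R$.
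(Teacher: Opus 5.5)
There is a genuine gap, and it is in Step~1, on which everything else rests.

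\textbf{Step 1 fails.} For a general admissible $(\vec\delta,\vec\nu)$ the plates $p[\vec\delta,\vec\nu](s)$ have side lengths $\rho_0,\rho_1,\ldots,\rho_n$ along $e_1(s),\ldots,e_{n+1}(s)$, not $1,\delta,\ldots,\delta^n$. For example, $\vec\delta=(1,\delta_2,1,\ldots,1)$ with $\delta_2=R^{-1/(n-1)}$ gives plates with sides $1,1,\delta_2,\delta_2^2,\ldots,\delta_2^{n-1}$. These are plates of a cone over a curve one dimension lower, which is why the exponents $p_{n-J}$ appear in Proposition~\ref{0526.prop23}.

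No change of frame or rescaling in $s$ turns $\{p[\vec\delta](s)\}_{s\in\mathbb I_\delta}$ into the canonical family $\{\Pi^\varsigma_{\delta^{-n}}(\theta)\}$. So Theorem~\ref{0524.thm54} cannot be applied at scale $\delta$, and the bound $\delta^{-(1/2-2/p-c)}\|(\sum_s|G_s|^{p/2})^{2/p}\|_{L^p}$ that starts your Step~3 is not available.

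The pieces form a canonical $n$-dimensional family only at the first scale: $\sum_{|s-s_1|\le\delta_1}G_s$ is Fourier supported in a plate of type $\Pi^{\varsigma}_{\delta_1^{-n}}$. The paper applies Theorem~\ref{0524.thm54} exactly there, with $R$ replaced by $\delta_1^{-n}$, on an $O(1)$-overlapping cover of $E\cap\mathcal B_R$ by $\delta_1^{-n}$-balls. That theorem holds at every scale for the same Katz--Tao $(1,n,C)$-set, so $E$ is never rescaled and your Step~2 obstacle does not arise.

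The rest of the descent is done on full weights, with no fractal input:
\begin{itemize}
\item Proposition~\ref{0526.prop23} at the scales $\delta_2,\ldots,\delta_{n-1}$, with exponents $r_j=\min\{p,p_{n+1-j}\}$;
\item the $\ell^{r_j/2}\to\ell^{r_{j+1}/2}$ steps, paid for by the kernel bound $\lesssim R^{-1}$ and $\|f\|_\infty$;
\item the $L^4$ square-function estimate, Proposition~\ref{0526.prop24}.
\end{itemize}
This produces $R^{1-p+\varepsilon}\delta_1^{c_1p}\delta^{-p}\prod_j\delta_j^{2+r_j/2}\,\|f\|_\infty^{p-4}\|f\|_{L^4}^4$, not a bound of the form $\delta^{a(p)}\|f\|_{L^p}$.

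\textbf{The source of gain in the bad regime is wrong.} The dichotomy has to be on $\delta_1$, not on $\delta$. Since $R^{-1}=\prod_k\delta_k^{n-k+1}\ge\delta^n$, one always has $\delta\le R^{-1/n}$, so the case ``$\delta\sim1$'' never occurs.

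When $\delta_1>R^{-1/(2n)}$, the factor $\delta_1^{c}$ from Theorem~\ref{0524.thm54} is useless, and the paper uses no fractal input at all. Because $\delta_1$ is close to $1$, the scales $\delta_2,\ldots,\delta_{n-1}$ carry essentially all of $R^{-1}$, and their exponents are those of an $(n-1)$-dimensional problem. This gives $R^{-\omega_{n-1}(p)}$, where $\omega_m(p)=\min\{\frac1m(\frac12+\frac2p),\frac2p\}$, up to a controlled loss coming from $\delta_1\ge R^{-1/(2n)}$. At $p=2n-4$ one has $p\,\omega_{n-1}(p)=\frac n{n-1}>1=p\,\omega_n(p)$, which is a fixed power gain.

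Your proposed substitute gives no gain. Combining $|E\cap\mathcal B_R|\lesssim R^n$ with $\|\sum_sG_s\|_\infty\lesssim\delta^{-1}R^{-1}\|f\|_\infty$ gives $R^{n/p}\delta^{-1}R^{-1}\|f\|_\infty$. For $f$ spread over $B_R^x$ this is exactly $\delta^{-1}R^{-1}\|f\|_{L^p}$, so there is nothing to interpolate against.

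\textbf{Two further steps are missing.}
\begin{itemize}
\item Passing from $\|f\|_\infty^{p-4}\|f\|_{L^4}^4$ to $\|f\|_{L^p}^p$ requires localizing $f$ to $B(x_0,R^2)$ and splitting it into $O(\log R)$ level sets.
\item For $p\ge n^2+n-2$, Theorem~\ref{0524.thm54} is unavailable. One must interpolate the gained estimate at some $p_{\rm small}<n^2+n-2$ with the $L^\infty$ bound.
\end{itemize}
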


\begin{proof}[Proof of Theorem~\ref{0525.thm13} assuming Theorem~\ref{0524.thm53}]
Following \cite[(3.49)--(3.51)]{gan2025sharplocalsmoothingestimates}, it suffices to prove
\begin{equation}\label{0406.03}
\left\|
\sum_{s\in\mathbb I_\delta}
a_s
\mathcal F_{\xi,\tau}^{-1}
\left(
(\mathcal F_xf)(\xi)\phi_{P[\vec\delta,\vec\nu](s)}(\xi,\tau)
\right)
(\cdot-\mathbf n_s)
\right\|_{L^p(E)}
\lesssim
\delta^{-1}R^{-1/p-\epsilon}
\|f\|_{L^p(\mathbb R^n)}.
\end{equation}
For brevity, write $P_s:=P[\vec\delta,\vec\nu](s)$ and $p_s:=R^{-1}P_s$. To apply Theorem~\ref{0524.thm53}, put $\widetilde E:=RE$, $g(x):=f(x/R)$, and $\psi_{p_s}(\zeta,\sigma):=\phi_{P_s}(R\zeta,R\sigma)$. Then $\widetilde E\subset\mathbb R^n\times[R,2R]$ is a Katz--Tao $(1,n,C')$-set, $\psi_{p_s}$ is uniformly adapted to $p_s$, and
\begin{equation}
\|g\|_{L^p(\mathbb R^n)}
=
R^{n/p}\|f\|_{L^p(\mathbb R^n)}.
\end{equation}
Moreover, a direct change of variables $(\xi,\tau)=R(\zeta,\sigma)$ gives
\begin{equation}
\mathcal F_{\xi,\tau}^{-1}
\left(
(\mathcal F_xf)(\xi)\phi_{P_s}(\xi,\tau)
\right)
(Z/R-\mathbf n_s)
=
R
\mathcal F_{\zeta,\sigma}^{-1}
\left(
(\mathcal F_xg)(\zeta)\psi_{p_s}(\zeta,\sigma)
\right)
(Z-R\mathbf n_s).
\end{equation}
Since $\operatorname{supp}\hat g\subset B(0,2)\setminus B(0,1/2)$, Theorem~\ref{0524.thm53} applied on $\widetilde E$ gives
\begin{align}
&\left\|
\sum_{s\in\mathbb I_\delta}
a_s
\mathcal F_{\xi,\tau}^{-1}
\left(
(\mathcal F_xf)(\xi)\phi_{P_s}(\xi,\tau)
\right)
(\cdot-\mathbf n_s)
\right\|_{L^p(E)}
\notag\\
&\qquad=
R^{1-(n+1)/p}
\left\|
\sum_{s\in\mathbb I_\delta}
a_s
\mathcal F_{\zeta,\sigma}^{-1}
\left(
(\mathcal F_xg)(\zeta)\psi_{p_s}(\zeta,\sigma)
\right)
(\cdot-R\mathbf n_s)
\right\|_{L^p(\widetilde E)}
\notag\\
&\qquad\lesssim
\delta^{-1}
R^{1-(n+1)/p}
R^{-1-\epsilon}
R^{n/p}
\|f\|_{L^p(\mathbb R^n)}
=
\delta^{-1}R^{-1/p-\epsilon}
\|f\|_{L^p(\mathbb R^n)}.
\end{align}
This proves \eqref{0406.03}.
\end{proof}

\subsection{Propositions from \cite{gan2025sharplocalsmoothingestimates} }

We use two estimates from \cite{gan2025sharplocalsmoothingestimates}. Proposition~\ref{0526.prop23} descends from one intermediate scale to the next, while Proposition~\ref{0526.prop24} bounds the final $L^4$ square function.
Let $p_m=m^2+m-2$ for any $m \geq 2$.

\begin{proposition}[Proposition 9.5 of \cite{gan2025sharplocalsmoothingestimates}]\label{0526.prop23}
For every $1\le J\le n-2$, $4\le p\le p_{n-J}$, and any
$\varepsilon>0$, we have
\begin{equation}\label{eq:sec2-gmo-multiscale-step}
\begin{aligned}
&\left\|
\left(
\sum_{s\in \mathbb I_{\delta_1\ldots\delta_J}}
\left|
\sum_{\substack{
s'\in \mathbb I_{\delta_1\ldots\delta_{J+1}}\\
|s-s'|\le \delta_1\ldots\delta_J
}}
F_{p[\vec{\delta}_{(J+1)}](s')}
\right|^{\frac p2}
\right)^{\frac 2p}
\right\|_{L^p(W_{B_R^{(n+1)}})}
\\
&\qquad\lesssim
(\delta_{J+1})^{-\left(\frac12-\frac2p+\varepsilon\right)}
\left\|
\left(
\sum_{s'\in \mathbb I_{\delta_1\ldots\delta_{J+1}}}
\left|
F_{p[\vec{\delta}_{(J+1)}](s')}
\right|^{\frac p2}
\right)^{\frac 2p}
\right\|_{L^p(W_{B_R^{(n+1)}})}
\end{aligned}
\end{equation}
for any family of functions
$
F_{p[\vec{\delta}_{(J+1)}](s')}:\mathbb R^{n+1}\to\mathbb C$
with
$\operatorname{supp}\widehat{F}_{p[\vec{\delta}_{(J+1)}](s')}
\subset p[\vec{\delta}_{(J+1)}](s').$
\end{proposition}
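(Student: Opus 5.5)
This is Proposition 9.5 of \cite{gan2025sharplocalsmoothingestimates}, quoted from that paper, so the natural course is to take it from there. The plan below describes how it would be proved from scratch.

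\emph{Reduction.} Fix $s\in\mathbb I_{\delta_1\cdots\delta_J}$. The inner sum runs over $s'$ in an interval of length $\delta_1\cdots\delta_J$, and this interval is divided into about $\delta_{J+1}^{-1}$ subintervals of length $\delta_1\cdots\delta_{J+1}$. The plates $p[\vec\delta_{(J+1)}](s')$ attached to these $s'$ all lie in the fixed coarser plate $p[\vec\delta_{(J)}](s)$. I would apply the affine rescaling adapted to $p[\vec\delta_{(J)}](s)$: it sends the frame $e_1(s),\dots,e_{n+1}(s)$ to the coordinate axes, with scales $\rho_0,\dots,\rho_J$ in the first $J+1$ directions and unit size in the rest. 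By the construction of the frame, the first $J+1$ coordinates carry the part of the curve $s'\mapsto e_1(s')$ that varies; the remaining directions are trivial, since their widths are $\lesssim1$ and stay fixed.

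\emph{Local estimate.} After rescaling, with $\sigma=(s'-s)/(\delta_1\cdots\delta_J)$, the subplates become neighbourhoods of a nondegenerate curve in $\R^{J+1}$, essentially the moment curve up to small perturbation. Their dimensions are $1,\delta_{J+1},\delta_{J+1}^2,\dots$, so they match the canonical plates at scale $\delta_{J+1}^{-(J+1)}$ (possibly after also rescaling along the direction with sign condition $\nu$). The required bound on each rescaled cube is then the $\ell^{p/2}$ inequality
\[
\Bigl\|\sum_{\theta}F_\theta\Bigr\|_{L^p(\R^{J+1})}\lesssim_\varepsilon \delta_{J+1}^{-(\frac12-\frac2p+\varepsilon)}\Bigl\|\Bigl(\sum_\theta|F_\theta|^{p/2}\Bigr)^{2/p}\Bigr\|_{L^p},
\]
where $\theta$ ranges over $\delta_{J+1}$-arcs. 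This should hold for $4\le p\le p_{n-J}$, since the available "codimension" counts the directions $J+1,\dots,n$ not fixed by the coarse plate.

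\emph{Proof of the local inequality.} I would prove it by the standard broad/narrow induction on scales of Bourgain--Guth. The narrow part comes from lower-dimensional pieces and is handled by induction on dimension. The broad part is controlled by the multilinear restriction estimate for the moment curve together with a square-function bound at the critical exponent. Interpolating between $L^4$ and $L^{p_m}$, where $p_m=m^2+m-2$ arises from the optimal multilinear exponent, gives the $\frac12-\frac2p$ power.

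\emph{Globalization.} Finally I would undo the rescaling, raise the local inequality to the power $p/2$ at each fixed $s$, and sum over $s$. The weights $W_{B_R}$ are stable under this procedure, because the plate dual boxes have dimensions at most $R$.

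\emph{Main obstacle.} I expect the hardest step to be verifying the exact exponent range $p\le p_{n-J}$ in the broad case, in particular that the relevant dimension is $n-J$ rather than $J+1$. That is the point at which one would rely on the cited result.
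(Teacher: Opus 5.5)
Citing \cite[Proposition~9.5]{gan2025sharplocalsmoothingestimates} is all the paper does, so on that point you agree with it. The from-scratch plan, however, puts the refinement in the wrong directions, and as set up it cannot give the stated range.

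Compare the widths directly. The coarse plate $p[\vec\delta_{(J)}](s)$ has widths $\rho_0,\dots,\rho_J$ along $e_1(s),\dots,e_{J+1}(s)$. Along $e_{J+1+m}(s)$, for $1\le m\le n-J$, its width is $\rho_J(\delta_1\cdots\delta_J)^m$, which is not of unit size. The fine plate $p[\vec\delta_{(J+1)}](s')$ has the \emph{same} widths $\rho_0,\dots,\rho_J$ in the first $J+1$ frame directions. The only change there, when $\delta_{J+1}<1$, is that the $e_{J+1}$-coefficient acquires the sign condition $\nu_{J+1}$, so $e_{J+1}$ plays the role of the radial direction of a cone. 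Along $e_{J+1+m}$ the fine width is $\rho_J(\delta_1\cdots\delta_{J+1})^m$.

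So after normalizing the coarse plate, the subplates are unit-size in $J+1$ directions. In the last $n-J$ directions they have the profile $\delta_{J+1},\delta_{J+1}^2,\dots,\delta_{J+1}^{n-J}$. The refinement happens exactly in the directions you call trivial, and not at all in the $J+1$ directions where you place the curve. The correct count makes the step an $\ell^{p/2}$ estimate for cone-type plates over a nondegenerate curve in $\R^{n-J}$ at scale $\delta_{J+1}^{-(n-J)}$, that is, the analogue of Proposition~\ref{cor:ms-family-form-gmo} in dimension $n-J$. This is where both the factor $(\delta_{J+1}^{-(n-J)})^{\frac1{n-J}(\frac12-\frac2p)}=\delta_{J+1}^{-(\frac12-\frac2p)}$ and the range $4\le p\le p_{n-J}$ come from.

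Your model, a curve in $\R^{J+1}$ at scale $\delta_{J+1}^{-(J+1)}$, would give a range governed by $J$ that grows with $J$, whereas $p_{n-J}$ decreases with $J$. The remark about codimension does not reconcile the two. So the obstacle you defer to the broad case is really a misidentification of which directions carry the decoupling. Even with the correct count, the $J$ extra unit directions are attached to the rotating frame $e_1(s'),\dots,e_J(s')$, which moves by $O(1)$ in normalized coordinates as $s'$ varies. They are therefore not a fixed product factor that Fubini removes.

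The globalization step also fails as described. Write $F_s$ for the inner sum and $G_s:=\sum_{|s'-s|\le\delta_1\cdots\delta_J}|F_{p[\vec\delta_{(J+1)}](s')}|^{p/2}$. The left side is $\int(\sum_s|F_s|^{p/2})^2W_{B_R}$, which couples different $s$.

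\begin{itemize}
\item Summing $p$-th powers of the single-$s$ bounds controls only $\sum_s\int|F_s|^pW_{B_R}$, which is smaller than the left side.
\item Summing $p/2$-th powers and using Minkowski leaves $\sum_s\|G_s\|_{L^2(W_{B_R})}$ on the right, which is larger than the needed $\|\sum_sG_s\|_{L^2(W_{B_R})}$.
\end{itemize}

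You cannot repair this by localizing, because the single-$s$ inequality only holds on regions larger than those on which the $G_s$ are essentially constant. What has to be proved is a genuinely mixed-norm $L^p(\ell^{p/2})$ version of the single-$s$ estimate, and that is exactly what the cited proposition asserts.
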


\begin{proposition}[Proposition 9.4 of \cite{gan2025sharplocalsmoothingestimates}]
\label{0526.prop24}
Let $\mathcal B_R=B_{\mathbb R^{n+1}}((x_0,t_0),R)$ be a ball of radius $R$ centered at $(x_0,t_0)$, and let $B_R^x:=B_{\mathbb R^n}(x_0,R)$ be a ball of radius $R$ centered at $x_0$. For any $\varepsilon>0$, we have
\begin{equation}\label{eq:sec2-gmo-terminal-square-function}
\left\|
\left(
\sum_{s\in \mathbb I_\delta}
\left|a_s
\mathcal F_{\xi,\tau}^{-1}\left(
 (\mathcal F_xf)(\xi)\phi_{p[\vec\delta](s)}(\xi,\tau)
\right)
\bigl((x,t)-R\mathbf n_s\bigr)\right|^2
\right)^{\frac12}
\right\|_{L^4(W_{\mathcal B_R})}^4
\lesssim_{\varepsilon}
R^{-3+\varepsilon}
\|f\|_{L^4(W_{B_R^x})}^4
\end{equation}
whenever $(\vec\delta,\vec\nu)$ is admissible at scale $R$, $|a_s|\lesssim1$, and
$\mathbf n_s\in\{u(\gamma(s),1):u\in[1,2]\}$.
\end{proposition}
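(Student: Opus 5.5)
The plan is to reduce the $L^4$ square function estimate to a bilinear geometric counting statement for the dual boxes of the plates $p[\vec\delta](s)$. The reduction uses the fact that in $L^4$ the square function can be squared out exactly. Write
\[
g_s(x,t):=a_s\mathcal F_{\xi,\tau}^{-1}\big((\mathcal F_xf)(\xi)\phi_{p[\vec\delta](s)}(\xi,\tau)\big)\big((x,t)-R\mathbf n_s\big),
\]
so the left-hand side of \eqref{eq:sec2-gmo-terminal-square-function} equals
\[
\int\Big(\sum_s|g_s|^2\Big)^2W_{\mathcal B_R}=\sum_{s,s'}\int|g_s|^2|g_{s'}|^2W_{\mathcal B_R}.
\]

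First, since $\widehat f$ depends only on $\xi$, I will write $g_s=f*_xK_s(\cdot,t)$ after the translation by $R\mathbf n_s$. Here $K_s(\cdot,t)$ is the partial inverse Fourier transform of $\phi_{p[\vec\delta](s)}$. The function $K_s(\cdot,t)$ is concentrated, for each $t$, on a slice of the dual box $p[\vec\delta](s)^*$, which has dimensions $\rho_{i-1}^{-1}$ in the directions $e_i(s)$. By Cauchy--Schwarz,
\[
|g_s(x,t)|^2\lesssim\|K_s(\cdot,t)\|_{L^1}\,\big(|f|^2*|K_s(\cdot,t)|\big)(x).
\]
Substituting this into the double sum gives a bound of the form
\[
\iint|f(y)|^2|f(y')|^2\,\mathcal K(y,y')\,dy\,dy',
\]
where
\[
\mathcal K(y,y')=\sum_{s,s'}\int\|K_s\|_1\|K_{s'}\|_1\,|K_s(x-y,t)|\,|K_{s'}(x-y',t)|\,W_{\mathcal B_R}\,dx\,dt .
\]
This is the point where the localisation $\|f\|_{L^4(W_{B_R^x})}$ enters, through the decay of the kernels away from the $R$-ball.

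Second, it then suffices, by Schur's test together with $|f(y)|^2|f(y')|^2\le\frac12(|f(y)|^4+|f(y')|^4)$, to prove
\[
\sup_y\int\mathcal K(y,y')\,dy'\lesssim_\varepsilon R^{-3+\varepsilon}.
\]
Integrating in $y'$ removes one kernel at the cost of a factor $\|K_{s'}\|_1$. What remains is a sum over pairs $(s,s')$ of the measure of the intersection of the translated dual boxes $R\mathbf n_s+p[\vec\delta](s)^*$ and $R\mathbf n_{s'}+p[\vec\delta](s')^*$ inside $\mathcal B_R$, weighted by $|p[\vec\delta](s)|^2$.

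Third, this geometric estimate is the main obstacle. I would split the pairs according to $|s-s'|\sim 2^{j}\delta$. For such pairs I would use nondegeneracy, namely that $e_1(s),\dots,e_{n+1}(s)$ is a Frenet frame of the dual curve. This should show that the two boxes become transverse in the first $J$ directions once $2^j\delta\gtrsim\delta_1\cdots\delta_J$, and hence that their intersection shrinks correspondingly, while the number of such $s'$ is $\sim2^j$. The admissibility relation $\rho_n=R^{-1}$ should make each dyadic shell contribute at most $R^{-3}$, uniformly up to logarithmic losses; this relies on the $4=2\cdot2$ bilinear structure, giving essentially an $L^2$-orthogonality gain. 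The sum over the $O(\log R)$ shells then yields $R^{-3+\varepsilon}$.

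If the pointwise Cauchy--Schwarz step turns out to be too lossy for near-diagonal pairs, I would treat those pairs instead by Plancherel applied to $\sum_s|g_s|^2$. This uses that the supports of $\widehat{|g_s|^2}$, which lie in $p[\vec\delta](s)-p[\vec\delta](s)$, have bounded overlap at frequencies $\gtrsim\rho_1$. The low-frequency part would be handled by the kernel argument above. Alternatively, one can follow \cite{gan2025sharplocalsmoothingestimates} directly, since the statement is their Proposition 9.4.
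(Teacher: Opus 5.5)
The paper only quotes this statement from \cite{gan2025sharplocalsmoothingestimates}, so there is no in-paper proof to compare with. Judged on its own, your argument fails at the first step, because it reduces the proposition to an inequality that is false. The pointwise bound $|g_s|^2\lesssim\|K_s(\cdot,t)\|_{L^1}\,(|f|^2*|K_s(\cdot,t)|)$ throws away what keeps the $\#\mathbb I_\delta\sim\delta^{-1}$ pieces from all being large at once. That mechanism is that $g_s$ depends only on the part of $\widehat f$ in the $\xi$-projection of $p[\vec\delta](s)$, and these projections are essentially disjoint in $s$. For a concrete counterexample, take $f(x)=e^{ix\cdot\xi_0}\varphi(x/(CR))$ with $|\xi_0|=1$, $C$ a large constant, $\varphi(0)=1$ and $\widehat\varphi\in C_c^\infty(B(0,1))$. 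Take also nonnegative bumps $\phi_{p[\vec\delta](s)}$, $\mathbf n_s=\frac32(\gamma(s),1)$, and $\mathcal B_R$ centred at $(0,\frac32R)$. Then $|f|^2\gtrsim1$ on the $O(R)$-neighbourhood of $B_R^x$ that the kernels see. Hence $|f|^2*|K_s(\cdot,t)|\gtrsim\|K_s(\cdot,t)\|_{L^1}\gtrsim R^{-1}$ for every $s$ on a fixed fraction of $\mathcal B_R$, and so
\[
\iint|f(y)|^2|f(y')|^2\,\mathcal K(y,y')\,dy\,dy'=\int\Big(\sum_{s\in\mathbb I_\delta}\|K_s(\cdot,t)\|_{L^1}\,|f|^2*|K_s(\cdot,t)|\Big)^2W_{\mathcal B_R}\gtrsim\big(\delta^{-1}R^{-2}\big)^2R^{n+1}=\delta^{-2}R^{n-3}.
\]
By contrast, $R^{-3+\varepsilon}\|f\|_{L^4}^4\approx R^{n-3+\varepsilon}$, with or without the weight $W_{B_R^x}$. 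The loss $\delta^{-2}$ can be as large as $R$, for instance when $\delta_{n-1}=R^{-1/2}$ and all other $\delta_i=1$.

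The Schur step is also mis-computed. Integrating $\mathcal K(y,y')$ in $y'$ at fixed $(x,t)$ produces a factor $\|K_{s'}(\cdot,t')\|_{L^1}\approx R^{-1}$ that does not depend on $x$. So no intersection of an $s$-plate with an $s'$-plate survives, and $\sup_y\int\mathcal K(y,y')\,dy'\approx(\#\mathbb I_\delta)^2R^{-3}\approx\delta^{-2}R^{-3}$, which is just the trivial bound. More generally, no argument that sees only $|f|^2$ and positive kernels can work, since $|f|$ may be essentially constant.

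Your fallback does not repair this. In the example above, each $\widehat{|g_s|^2}$ is supported in $\{|\xi|\lesssim R^{-1},|\tau|\lesssim R^{-1}\}$, which lies entirely in the low-frequency part you hand back to the kernel argument. Moreover, the boxes $p[\vec\delta](s)-p[\vec\delta](s)$ are in general not boundedly overlapping at frequencies $\gtrsim\rho_1$. For $\delta_{n-1}=R^{-1/2}$ and the other $\delta_i=1$ one has $\rho_1=1$, and every $p[\vec\delta](s')-p[\vec\delta](s')$ with $|s'-s|\lesssim R^{-1/n}$ contains $c\,e_1(s)$. So the overlap at frequency $\sim1$ is at least about $R^{1/2-1/n}$.

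The missing ingredient is to use the $\xi$-localization before taking absolute values. With $\widehat{f_s}:=\chi_s\widehat f$ and $\chi_s$ adapted to the $\xi$-projection of $p[\vec\delta](s)$, one has $g_s=\mathscr T_sf_s$. Apply Cauchy--Schwarz to $f_s$ and dualize $\|\sum_s|g_s|^2\|_{L^2}$ against $w$. This leads to $R^{-1}\int(\sum_s|f_s|^2)\,\mathcal Nw$, where $\mathcal Nw(y):=\sup_s\int|K_s((x-y,t)-R\mathbf n_s)|\,w(x,t)\,dx\,dt$ is a Nikodym-type maximal average over the plates. The proposition then follows from two estimates, with weights suppressed:
\begin{itemize}
\item an $L^4$ square-function bound $\|(\sum_s|f_s|^2)^{1/2}\|_{L^4}\lesssim R^{\varepsilon}\|f\|_{L^4}$;
\item a maximal bound $\|\mathcal Nw\|_{L^2(\mathbb R^n)}\lesssim R^{-1/2+\varepsilon}\|w\|_{L^2(\mathbb R^{n+1})}$.
\end{itemize}
This is the Mockenhaupt--Seeger--Sogge scheme for the cone \cite{MockenhauptSeegerSogge1992}. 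Your shell-by-shell counting of plate intersections for $|s-s'|\sim2^j\delta$ is the kind of input the maximal bound needs; it cannot rescue a Schur bound.
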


\subsection{Proof of Theorem~\ref{0524.thm53} assuming Theorem~\ref{0524.thm54}}
For fixed coefficients $a_s$ and vectors $\mathbf n_s$ satisfying the hypotheses of Theorem~\ref{0524.thm53}, define
\begin{equation}\label{eq:sec2-translated-operators}
\mathscr T_sf(x,t)
:=
a_s
\mathcal F_{\xi,\tau}^{-1}\left(
(\mathcal F_xf)(\xi)\phi_{p[\vec\delta](s)}(\xi,\tau)
\right)
\bigl((x,t)-R\mathbf n_s\bigr),
\qquad
\mathscr Tf:=\sum_{s\in\mathbb I_\delta}\mathscr T_sf.
\end{equation}
Multiplication by $a_s$ and translation in $(x,t)$ do not change Fourier support. In particular,
$\operatorname{supp}\widehat{\mathscr T_sf}\subset p[\vec\delta](s)$.

We first reduce to a local estimate. Let $\mathcal B_R=B_{\mathbb R^{n+1}}((x_0,t_0),R)$ be an arbitrary ball in $\R^{n+1}$, and let $B_R^x=B_{\mathbb R^n}(x_0,R)$ be its projection onto $x$-variables as in Proposition~\ref{0526.prop24}. It suffices to prove
\begin{equation}\label{eq:sec2-local-weighted-target}
\|\mathscr Tf\|_{L^p(E\cap \mathcal B_R)}
\lesssim
\delta^{-1}R^{-1-c_{\ast\ast}}
\|f\|_{L^p(W_{B_R^x})}
\end{equation}
for some $c_{\ast\ast}=c_{\ast\ast}(p,n)>0$. Indeed, assume \eqref{eq:sec2-local-weighted-target}. Cover the slab $\mathbb R^n\times [R,2R]$ by $O(1)$-overlapping $R$-balls. Since $[R,2R]$ is of length $R$, we only need $O(1)$ layers. Within each of these layers, the $x$-projections of the chosen $R$-balls have bounded overlap. Summing the $p$-th powers of the local estimates \eqref{eq:sec2-local-weighted-target} over these $R$-balls using $\sum_B W_{B_R^x}\lesssim1$ gives Theorem~\ref{0524.thm53}.

Recall that $p_n=n^2+n-2$. We treat separately the ranges 
\begin{equation}
  2n-4-\epsilon_0<p<p_n \;\;\;\; \text{ or  } \;\;\;\; p \geq p_n,
\end{equation}
where $\epsilon_0$ is chosen later.

\begin{remark}[Local uniformity]
\label{rem:ms-gain-local-uniformity}
There exist constants $\eta_n>0$ and $\underline c_n>0$ such that the gain in Theorem~\ref{0524.thm54} may be chosen so that $c(p,n)\geq \underline c_n$ whenever $|p-(2n-4)|<\eta_n$. This will be verified at the end of Section~\ref{sec:broad-narrow}.
\end{remark}

\subsubsection{Case 1. $2n-4- \epsilon_0<p<p_n$}
We begin with the following lemma, where we distinguish $\delta_1\le R^{-\alpha_0}$ from $\delta_1> R^{-\alpha_0}$, where $\alpha_0:=1/(2n)$.
\begin{lemma}
\label{lem:sec2-common-descent}
Let $4<p<p_{n}$. For $1\leq j\leq n-1$ set
\begin{equation}
r_j:=\min\{p,p_{n+1-j}\},
\qquad
\mathfrak D_p(\vec\delta)
:=
\prod_{j=1}^{n-1}\delta_j^{2+\frac{r_j}{2}}.
\end{equation}
If $\delta_1\leq R^{-\alpha_0}$, then set $c_1=c(p,n)>0$ from Theorem~\ref{0524.thm54}. If $\delta_1>R^{-\alpha_0}$, set $c_1=0$. Then, for every $\varepsilon>0$,
\begin{equation}
\label{eq:sec2-common-descent-square-1}
\begin{aligned}
\|\mathscr Tf\|_{L^p(E\cap\mathcal B_R)}^p
\lesssim_{\varepsilon,p,n}
R^{-p+1+\varepsilon}
\delta_1^{c_1p}
\delta^{-p}
\mathfrak D_p(\vec\delta)
\|f\|_\infty^{p-4}
\|f\|_{L^4(W_{B_R^x})}^4.
\end{aligned}
\end{equation}
\end{lemma}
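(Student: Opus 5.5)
We follow the multiscale descent of \cite{gan2025sharplocalsmoothingestimates}. The only change is at the first step, where Theorem~\ref{0524.thm54} replaces the full-space $\ell^{p/2}$ decoupling whenever $\delta_1\le R^{-\alpha_0}$.

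\textbf{Step 1: first scale.} Group the pieces $\mathscr T_sf$, $s\in\mathbb I_\delta$, according to intervals $\theta$ of length $\delta_1$. The sum over each $\theta$ has Fourier support in a plate of type $p[\vec\delta_{(1)}]$. After a parabolic-type rescaling, this plate is comparable to $\Pi^{\varsigma}_{R_1}(\theta)$ with $R_1=\delta_1^{-n}$, where $\varsigma$ is determined by $\nu_1$. If $\delta_1\le R^{-\alpha_0}$, cover $\mathcal B_R$ by balls of radius $R_1\le R^{1/2}$ (in the rescaled picture). On each such ball, a Katz--Tao $(1,n,C)$-set restricted to the ball is still Katz--Tao, since this is a local condition. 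Applying Theorem~\ref{0524.thm54} on each ball and summing gives
\[
\|\mathscr Tf\|_{L^p(E\cap\mathcal B_R)}\lesssim \delta_1^{-(\frac12-\frac2p-c)}\Big\|\Big(\sum_{\theta}|\mathscr T_\theta f|^{p/2}\Big)^{2/p}\Big\|_{L^p(W_{\mathcal B_R})}.
\]
If instead $\delta_1>R^{-\alpha_0}$, we drop $E$ (enlarge to $\mathcal B_R$) and use the unrestricted estimate with $c=0$, i.e.\ \cite[Theorem~5.3]{gan2025sharplocalsmoothingestimates}. The main obstacle in this step is checking that the linear change of variables carrying $p[\vec\delta_{(1)}](s)$ to $\Pi^\varsigma_{R_1}$ distorts the Katz--Tao property only by constants. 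The frame $e_i(s_\theta)$ is fixed on each $\theta$, so the map is a fixed affine map at unit scale, and $n$-dimensional Katz--Tao counts are preserved up to $C$.

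\textbf{Step 2: descent.} Apply Proposition~\ref{0526.prop23} successively for $J=1,\dots,n-2$. Since $p$ may exceed $p_{n-J}$, for those $J$ use interpolation: H\"older in the $\ell^{p/2}$ sum together with the $p=p_{n-J}$ case. This yields the factor $\delta_{J+1}^{-(\frac12-\frac2{r_{J+1}}+\varepsilon)}$ together with the leftover powers that are encoded in $r_j=\min\{p,p_{n+1-j}\}$. After all steps we are left with the $\ell^{p/2}$ sum over $s\in\mathbb I_\delta$ of $|\mathscr T_sf|$.

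\textbf{Step 3: reduction to $L^4$.} Bound the remaining quantity by $\sup_s\|\mathscr T_sf\|_\infty^{p-4}$ times the $L^4$ norm of the square function. This uses $\ell^{p/2}\subset\ell^2$ and the pointwise bound $\|\mathscr T_sf\|_\infty\lesssim \|f\|_\infty$ times the $L^1$ norm of the plate kernel. Then invoke Proposition~\ref{0526.prop24} to obtain $R^{-3+\varepsilon}\|f\|_{L^4(W_{B^x_R})}^4$.

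\textbf{Step 4: bookkeeping.} Raise everything to the $p$-th power and collect the powers of $\delta_j$ from Steps 1--2, the $L^\infty$ kernel bounds, and the relation $\rho_n=R^{-1}$. This should give $R^{-p+1+\varepsilon}\delta^{-p}\mathfrak D_p(\vec\delta)$, with the extra factor $\delta_1^{c_1p}$ coming from Step 1; this is the same computation as in \cite{gan2025sharplocalsmoothingestimates}.
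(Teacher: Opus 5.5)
\textbf{Gap in the descent (Steps 2--4).} Your Step 1 and your endpoint (Proposition~\ref{0526.prop24}) agree with the paper. The descent in Steps 2--3, however, is not set up correctly, and this is exactly where the exponents $r_j$ and the factor $\mathfrak D_p(\vec\delta)$ come from.

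You stay at exponent $p$ through all $n-2$ descent steps and extract an $L^\infty$ norm only at the finest level, via $\sup_s\|\mathscr T_sf\|_\infty^{p-4}\lesssim(R^{-1}\|f\|_\infty)^{p-4}$. For every $J$ with $p>p_{n-J}$ this requires an $L^p(\ell^{p/2})\to L^p(\ell^{p/2})$ estimate that Proposition~\ref{0526.prop23} does not supply. H\"older applied to the $p_{n-J}$ case cannot bring you back to $L^p(\ell^{p/2})$ on the right without producing an $L^\infty$ norm of the intermediate pieces $\mathscr T_{J,s_J}f$. An interpolation toward an $L^\infty$ endpoint for these mixed-norm square functions would itself have to be proved; the interpolation lemma the paper uses, Lemma~\ref{lem:ms-large-p-admissible-interpolation}, only works between finite exponents inside $(4,p_n]$.

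Moreover, a single extraction at the end yields only $R^{4-p}\|f\|_\infty^{p-4}$. Nothing in your scheme produces the factors $\delta_k^{r_k-p}$, $k\ge2$, that are needed. So your claim in Step 4 that the bookkeeping ``should give'' $\delta^{-p}\mathfrak D_p(\vec\delta)$ is unsupported. For the record, an $\ell^{p/2}$ bound with constant $\delta_{J+1}^{-(1-\frac2p-\frac{r_{J+1}}{2p})}$ would make your scheme give the same numbers, but you would have to prove it.

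\textbf{The missing idea.} Lower the exponent at every level by extracting the $L^\infty$ norm of the intermediate pieces. Set $\mathcal E_j(q)=\int(\sum_{s_j}|\mathscr T_{j,s_j}f|^{q/2})^2W_{\mathcal B_R}$ and use the pointwise bound
\[
\Big(\sum_{s_j}|\mathscr T_{j,s_j}f|^{r_j/2}\Big)^2\le\sup_{s_j}\|\mathscr T_{j,s_j}f\|_\infty^{r_j-r_{j+1}}\Big(\sum_{s_j}|\mathscr T_{j,s_j}f|^{r_{j+1}/2}\Big)^2 .
\]
Then bound $\|\mathscr T_{j,s_j}f\|_\infty\lesssim R^{-1}\prod_{k=j+1}^{n-1}\delta_k^{-1}\|f\|_\infty$. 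This is the number of $s\in\mathbb I_\delta$ within $\delta_1\cdots\delta_j$ of $s_j$, times the $L^1$ kernel bound. Finally, apply Proposition~\ref{0526.prop23} at the admissible exponent $r_{j+1}\le p_{n-j}$.

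Since $r_{n-1}=4$, this lands on $\mathcal E_{n-1}(4)$, where Proposition~\ref{0526.prop24} applies with no further extraction. The extracted factors multiply to $\prod_{j=1}^{n-2}(R^{-1}\prod_{k>j}\delta_k^{-1})^{r_j-r_{j+1}}=R^{4-p}\prod_{k\ge2}\delta_k^{r_k-p}$. Combined with the decoupling losses $\delta_k^{2-r_k/2}$, this gives $\delta_k^{-p+2+r_k/2}$ for each $k\ge2$. The first step contributes $\delta_1^{c_1p}\delta_1^{-p+2+r_1/2}$, since $r_1=p$. Together these give $\delta_1^{c_1p}\delta^{-p}\mathfrak D_p(\vec\delta)$. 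This is why the statement carries $\|f\|_\infty^{p-4}$.

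\textbf{Two smaller corrections to Step 1.} First, $\delta_1\le R^{-\alpha_0}$ gives $\delta_1^{-n}\ge R^{1/2}$, not $\le$. What you actually need is $\delta_1^{-n}\le R$, which follows from $\rho_n=R^{-1}\le\delta_1^n$.

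Second, no change of variables is needed. $p[\vec\delta_{(1)}](s)$ already has widths $1,\delta_1,\ldots,\delta_1^n$ in the frame at $s$, so it is a $\Pi^{\varsigma}_{\delta_1^{-n}}$-plate with $\varsigma$ given by $\nu_1$. A separate frame change for each $\theta$ would in any case be incompatible with applying Theorem~\ref{0524.thm54} to the whole family at once.
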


\begin{proof}
For $1\leq j\leq n-1$, define
\begin{equation}
\mathscr T_{j,s_j}f
:=
\sum_{\substack{s\in\mathbb I_\delta\\
|s-s_j|\leq\delta_1\ldots\delta_j}}
\mathscr T_sf,
\qquad
\mathcal E_j(q)
:=
\int
\left(
\sum_{s_j\in\mathbb I_{\delta_1\ldots\delta_j}}
|\mathscr T_{j,s_j}f|^{q/2}
\right)^2
W_{\mathcal B_R}.
\end{equation}
If $\delta_1\leq R^{-\alpha_0}$, then $\delta_1^{-n}\geq R^{n\alpha_0}$. Taking an $O(1)$-overlapping cover of $E\cap\mathcal B_R$ by $\delta_1^{-n}$-balls, applying Theorem~\ref{0524.thm54} at each $\delta_1^{-n}$-ball, and summing $p$-th powers gives
\begin{equation}
\|\mathscr Tf\|_{L^p(E\cap\mathcal B_R)}^p
\lesssim
\delta_1^{-p(\frac12-\frac2p-c(p,n))}
\mathcal E_1(p).
\end{equation}
If $\delta_1>R^{-\alpha_0}$, apply Theorem 5.3 of \cite{gan2025sharplocalsmoothingestimates}. Since
\begin{equation}
\|G\|_{L^p(E\cap B_{\delta_1^{-n}})}
\leq
\|G\|_{L^p(B_{\delta_1^{-n}})},
\end{equation}
taking an $O(1)$-overlapping cover of $E\cap\mathcal B_R$ by $\delta_1^{-n}$-balls and summing $p$-th powers gives
\begin{equation}
\|\mathscr Tf\|_{L^p(E\cap\mathcal B_R)}^p
\lesssim_\varepsilon
\delta_1^{-p(\frac12-\frac2p+\varepsilon)}
\mathcal E_1(p).
\end{equation}
Note that we use $\sum_B W_B\lesssim W_{\mathcal B_R}$ where the sum is over the $\delta_1^{-n}$-balls that cover $\mathcal B_R$.

For $1\leq j\leq n-2$, the pointwise $\ell^{r_{j+1}/2}$-$\ell^\infty$ inequality, Proposition~\ref{0526.prop23} with exponent $r_{j+1}$, and the uniform kernel estimate $\sup_u\|\mathcal F_{\xi,\tau}^{-1}(\phi_{p[\vec\delta](s)})(\cdot,u)\|_{L_x^1}\lesssim R^{-1}$ give
\begin{equation}
\begin{aligned}
\mathcal E_j(r_j)
\lesssim{}&
\left(
R^{-1}\prod_{k=j+1}^{n-1}\delta_k^{-1}
\right)^{r_j-r_{j+1}}
\delta_{j+1}^{-r_{j+1}
(\frac12-\frac2{r_{j+1}}+\varepsilon)}
\|f\|_\infty^{r_j-r_{j+1}}
\mathcal E_{j+1}(r_{j+1}).
\end{aligned}
\end{equation}
Iterating this estimate and using $r_1=p$ and $r_{n-1}=4$, we obtain the factor $\|f\|_\infty^{p-4}\mathcal E_{n-1}(4)$. Moreover,
\begin{equation}
\prod_{j=1}^{n-2}
\left(
R^{-1}\prod_{k=j+1}^{n-1}\delta_k^{-1}
\right)^{r_j-r_{j+1}}
=
R^{4-p}\prod_{k=2}^{n-1}\delta_k^{r_k-p}.
\end{equation}
Note that
\begin{equation}
\delta_1^{-p(\frac12-\frac2p-c_1)}
=
\delta_1^{c_1p}
\delta_1^{-p+2+\frac p2}
\end{equation}
and
\begin{equation}
\delta_k^{r_k-p}
\delta_k^{-\frac{r_k}{2}+2}
=
\delta_k^{-p+2+\frac{r_k}{2}},\qquad k\geq2.
\end{equation}
Therefore, the product of all scale factors equals
\begin{equation}
\delta^{-C\varepsilon}\delta_1^{c_1p}
\delta^{-p}R^{4-p}
\prod_{k=1}^{n-1}\delta_k^{2+\frac{r_k}{2}}
=
\delta^{-C\varepsilon}\delta_1^{c_1p}\delta^{-p}R^{4-p}\mathfrak D_p(\vec\delta)
\end{equation}
for some $C>0$. Since $R^{-1}=\prod_{k=1}^{n-1}\delta_k^{n-k+1}\leq\prod_{k=1}^{n-1}\delta_k^2=\delta^2$, we have $\delta^{-1}\leq R^{1/2}$. After taking $\varepsilon$ sufficiently small, the factor $\delta^{-C\varepsilon}$ is absorbed into the $R^\varepsilon$ loss. The estimate \eqref{eq:sec2-common-descent-square-1} then follows from Proposition~\ref{0526.prop24}.
\end{proof}
To bound $\mathfrak D_p(\vec\delta)$ by a power of $R$, define
\begin{equation}
 \omega_m(p)=\min\left\{\frac1m\left(\frac12+\frac2p\right),\frac2p\right\}
\end{equation}
for $m\in\mathbb{Z}_{>0}$ and $p\geq4$.
\medskip

\subsubsection*{Case 1.1. $\delta_1 \leq R^{-\alpha_0}$.}
For every $1\leq k\leq n-1$,
\begin{equation}
 \frac1p\left(2+\frac{r_k}2\right)\geq(n-k+1)\omega_n(p).
\end{equation}
By Lemma~\ref{lem:sec2-common-descent},
\begin{equation}\label{eq:case1}
 \begin{split}
 \|\mathscr Tf\|_{L^p(E\cap\mathcal B_R)}^p&\lesssim_\epsilon \delta^{-p}R^{-p+1-p\omega_n(p)+\epsilon}\delta_1^{c_1p}\|f\|_{L^\infty}^{p-4}\|f\|_{L^4(W_{B_R^x})}^4\\
 &\lesssim_\epsilon\delta^{-p}R^{-p+1-p\omega_n(p)-c_1p\alpha_0+\epsilon}\|f\|_{L^\infty}^{p-4}\|f\|_{L^4(W_{B_R^x})}^4
 \end{split}
\end{equation}
because
\begin{equation}
 \mathfrak D_p(\vec\delta)^{1/p}=\prod_{k=1}^{n-1}\delta_k^{\frac1p(2+\frac{r_k}{2})}\leq\prod_{k=1}^{n-1}\delta_k^{(n-k+1)\omega_n(p)}=R^{-\omega_n(p)}
\end{equation}
\medskip

\subsubsection*{Case 1.2. $\delta_1 > R^{-\alpha_0}$.}
Since the first intermediate scale $\delta_1$ is too large, we use the estimate without the improvement from Theorem~\ref{0524.thm54}. The estimates for $\delta_{2},\ldots,\delta_{n-1}$ give the exponent $\omega_{n-1}(p)$ in place of $\omega_n(p)$.

To write the remaining factor as a power of $\delta_1$, define
\begin{equation}
 \mu(p,n):=n\omega_{n-1}(p)-\frac12-\frac2p.
\end{equation}
Note that
\begin{equation}
 \frac1p\left(2+\frac {r_k} 2\right)\geq(n-k+1)\omega_{n-1}(p)
\end{equation}
for $k\geq2$. By Lemma~\ref{lem:sec2-common-descent},
\begin{equation}\label{eq:case2}
 \begin{split}
 \|\mathscr Tf\|_{L^p(E\cap\mathcal B_R)}^p&\lesssim_\epsilon \delta^{-p}R^{-p+1-p\omega_{n-1}(p)+p\alpha_0\max\{0,\mu(p,n)\}+\epsilon}\|f\|_{L^\infty}^{p-4}\|f\|_{L^4(W_{B_R^x})}^4
 \end{split}
\end{equation}
because
\begin{equation}
 \begin{split}
 \mathfrak D_p(\vec\delta)^{1/p}=\prod_{k=1}^{n-1}\delta_k^{\frac1p(2+\frac{r_k}{2})}&\leq\delta_1^{\frac12+\frac2p}\prod_{k=2}^{n-1}\delta_k^{(n-k+1)\omega_{n-1}(p)}\\
 &=R^{-\omega_{n-1}(p)}\delta_1^{-\mu(p,n)}\\
 &\leq R^{-\omega_{n-1}(p)}R^{\alpha_0\max\{0,\mu(p,n)\}}
 \end{split}
\end{equation}

For $p\geq 2n-4$, $p\omega_n(p)\geq1$ so that $1-p\omega_n(p)-p\alpha_0c(p,n)<0$. Since
\begin{equation}
p\omega_{n-1}(p)=\min\left\{\frac{p+4}{2(n-1)},2\right\},
\end{equation}
we have $p\omega_{n-1}(p)\geq n/(n-1)$ and $np\omega_{n-1}(p)-p/2-2\leq p\omega_{n-1}(p)$. Hence
\begin{equation}\label{eq:large-delta1-exponent}
1-p\omega_{n-1}(p)+\alpha_0\max\left\{0,np\omega_{n-1}(p)-\frac p2-2\right\}
\leq1-(1-\alpha_0)p\omega_{n-1}(p)\leq-\frac1{2(n-1)}.
\end{equation}
For $p$ sufficiently close to $2n-4$, Remark~\ref{rem:ms-gain-local-uniformity} gives $c(p,n)\geq\underline c_n>0$. Since $1-p\omega_n(p)-p\alpha_0\underline c_n$ and the first term of \eqref{eq:large-delta1-exponent} is continuous in $p$, there exists $\epsilon_0=\epsilon_0(n)>0$ such that both terms remain negative whenever $2n-4-\epsilon_0<p<p_n$. Consequently, for every such $p$, there exists $c_\ast=c_\ast(p,n)>0$ such that
\begin{equation}\label{eq:gain-exponent}
\max\left\{1-p\omega_n(p)-p\alpha_0c(p,n),1-p\omega_{n-1}(p)+\alpha_0\max\left\{0,np\omega_{n-1}(p)-\frac p2-2\right\}\right\}<-pc_\ast(p,n).
\end{equation}
Then, choosing $\epsilon\in(0,pc_\ast(p,n)/2)$ for \eqref{eq:case1} and \eqref{eq:case2} gives
\begin{equation}\label{eq:p-to-infty-4}
 \|\mathscr Tf\|_{L^p(E\cap\mathcal B_R)}^p\lesssim \delta^{-p}R^{-p-pc_\ast(p,n)/2}\|f\|_{L^\infty}^{p-4}\|f\|_{L^4(W_{B_R^x})}^4.
\end{equation}

To pass from \eqref{eq:p-to-infty-4} to an $L^p$ bound, we use the standard localization from \cite[(9.9)--(9.11) and (10.2)]{gan2025sharplocalsmoothingestimates}. Put $A:=\|f\|_{L^p(W_{B_R^x})}$ and assume $A>0$. Let $x_0$ be the center of $B_R^x$, and choose $\chi_{\rm loc}\in C_c^\infty(\mathbb R^n)$ such that
\begin{equation}
\chi_{\rm loc}=1\quad\text{on }B(x_0,R^2),
\qquad
\operatorname{supp}\chi_{\rm loc}\subset B(x_0,2R^2).
\end{equation}
Let $P_{\rm freq}$ be a fixed smooth Fourier multiplier operator whose multiplier equals $1$ on the projection onto the $\xi$-variables of every multiplier support occurring in $\mathscr T$, and set $g:=P_{\rm freq}(\chi_{\rm loc}f)$. Then $\mathscr Tg=\mathscr T(\chi_{\rm loc}f)$.

For $s\in\mathbb I_\delta$, let
$K_s:=\mathcal F_{\xi,\tau}^{-1}(\phi_{p[\vec\delta](s)})$.
Then
$\mathscr T_sh(x,t)=a_s\int_{\mathbb R^n}h(y)K_s((x-y,t)-R\mathbf n_s)\,dy$. Repeated integration by parts gives, for every $N\geq1$,
\begin{equation}\label{eq:Tf-far}
\|\mathscr T((1-\chi_{\rm loc})f)\|_{L^p(E\cap\mathcal B_R)}
\lesssim_N
R^{-N}A
\end{equation}
Moreover, by Young's convolution inequality, $\|g\|_{L^p(W_{B_R^x})}\lesssim A$. 

Fix a sufficiently large constant $C_{\rm amp}>1$ such that $\|g\|_\infty\leq R^{C_{\rm amp}}A$. For $\Lambda=\{2^j:R^{-N}\le 2^j\le R^{C_{\rm amp}}\}$, choose $N$ sufficiently large and write
\begin{equation}
g=\sum_{\lambda\in\Lambda}g_\lambda+r,
\qquad
g_\lambda
:=
g\mathbf 1_{\{\lambda A<|g|\leq2\lambda A\}},
\end{equation}
where $R^{-N}\leq\lambda\leq R^{C_{\rm amp}}$, $\#\Lambda\lesssim\log R$, and $\|r\|_\infty\lesssim R^{-N}A$. For every $\lambda\in\Lambda$,
\begin{equation}
\mathscr Tg_\lambda=\mathscr T(P_{\rm freq}g_\lambda),
\qquad
\|P_{\rm freq}g_\lambda\|_\infty\lesssim\lambda A,
\qquad
\|P_{\rm freq}g_\lambda\|_{L^4(W_{B_R^x})}
\lesssim
\|g_\lambda\|_{L^4(W_{B_R^x})},
\end{equation}
so that \eqref{eq:p-to-infty-4} implies
\begin{equation}\label{eq:g-lambda}
 \begin{aligned}
 \|\mathscr Tg_\lambda\|_{L^p(E\cap\mathcal B_R)}^p&=\|\mathscr T(P_{\rm freq}g_\lambda)\|_{L^p(E\cap\mathcal B_R)}^p \\&\lesssim\delta^{-p}R^{-p-pc_\ast(p,n)/2}\|P_{\rm freq}g_\lambda\|_\infty^{p-4}\|P_{\rm freq}g_\lambda\|_{L^4(W_{B_R^x})}^4\\
 &\lesssim \delta^{-p}R^{-p-pc_\ast(p,n)/2}(\lambda A)^{p-4}\|g_\lambda\|_{L^4(W_{B_R^x})}^4\\
 &\sim \delta^{-p}R^{-p-pc_\ast(p,n)/2}\|g_\lambda\|_{L^p(W_{B_R^x})}^p.
 \end{aligned}
\end{equation}
Also, $\|P_{\rm freq}r\|_\infty\lesssim R^{-N}A$. By the uniform kernel estimate $\|\mathscr T_s\|_{L^\infty(\mathbb R^n)\to L^\infty(\mathbb R^{n+1})}\lesssim R^{-1}$, we have
\begin{equation}
\|\mathscr Tr\|_{L^p(E\cap\mathcal B_R)}=\|\mathscr T(P_{\rm freq}r)\|_{L^p(E\cap\mathcal B_R)}
\lesssim
\delta^{-1}R^{n/p-1}R^{-N}A\lesssim
\delta^{-1}R^{-2-c_\ast/2}A
\end{equation}
for $N>2n/p+c_\ast/2+1$. Therefore, by \eqref{eq:g-lambda} and H\"older's inequality in $\lambda$,
\begin{equation}\label{eq:Tf-near}
\begin{aligned}
\|\mathscr T(\chi_{\rm loc}f)\|_{L^p(E\cap\mathcal B_R)}=\|\mathscr Tg\|_{L^p(E\cap\mathcal B_R)}
&\lesssim
\delta^{-1}R^{-1-c_\ast/2}
\sum_{\lambda\in\Lambda}
\|g_\lambda\|_{L^p(W_{B_R^x})}
+
\delta^{-1}R^{-2-c_\ast/2}A \\
&\lesssim
\delta^{-1}R^{-1-c_\ast/2}
(\log R)^{1-1/p}A \\
&\lesssim_\varepsilon
\delta^{-1}R^{-1-c_\ast/2+\varepsilon}
\|f\|_{L^p(W_{B_R^x})}.
\end{aligned}
\end{equation}
By \eqref{eq:Tf-far} and \eqref{eq:Tf-near}, taking $\varepsilon<c_\ast/4$ proves the local estimate \eqref{eq:sec2-local-weighted-target} for $p\in(2n-4-\epsilon_0,p_n)$.

\subsubsection{Case 2. $p \geq p_n$}
We now interpolate the estimate from Case 1 with the $L^\infty$ bound. Fix $p_{\rm small}\in(2n-4-\epsilon_0,p_n)$. By the small-exponent result just proved, there is $c_\ast=c_\ast(p_{\rm small},n)>0$ such that
\begin{equation}\label{eq:sec2-small-p0-endpoint}
\|\mathscr Tf\|_{L^{p_{\rm small}}(E\cap\mathcal B_R)}
\lesssim_{C}
\delta^{-1}R^{-1-c_\ast/4}
\|f\|_{L^{p_{\rm small}}(W_{B_R^x})}.
\end{equation}
Let $\chi$ be a smooth cutoff in the $\xi$ variable which is identically $1$ on $\cup_{s\in\mathbb I_\delta}\operatorname{proj}_\xi(\operatorname{supp}\phi_{p[\vec\delta](s)})$. Define
\begin{equation}
\mathscr T^{\chi}f:=\mathscr T\mathcal F_x^{-1}(\chi\widehat f).
\end{equation}
Following the $L^\infty$ argument in \cite[proof of Theorem~9.1, especially (9.10)]{gan2025sharplocalsmoothingestimates}, the uniform $L^1$ bound $\sup_u\|\mathcal F_{\xi,\tau}^{-1}(\phi_{p[\vec\delta](s)})(\cdot,u)\|_{L_x^1}\lesssim R^{-1}$, together with $|a_s|\lesssim1$ and translation invariance of the $L^1$-norm, gives
\begin{equation}\label{eq:sec2-interp-infty}
\|\mathscr T^\chi f\|_{L^\infty(E\cap\mathcal B_R)}\lesssim\#\mathbb I_\delta \cdot R^{-1}\|f\|_{L^\infty}\lesssim\delta^{-1}R^{-1}\|f\|_{L^\infty}.
\end{equation}
Since $\mathcal F_x^{-1}(\chi\widehat\cdot)$ is uniformly bounded on $L^{p_{\rm small}}(W_{B_R^x})$, it follows that
\begin{equation}\label{eq:sec2-interp-p0}
\|\mathscr T^\chi f\|_{L^{p_{\rm small}}(E\cap\mathcal B_R)}\lesssim\delta^{-1}R^{-1-c_\ast/4}\|f\|_{L^{p_{\rm small}}(W_{B_R^x})}.
\end{equation}
Interpolating \eqref{eq:sec2-interp-p0} and \eqref{eq:sec2-interp-infty} yields, for every finite $p\geq p_n$,
\begin{equation}
\|\mathscr T^\chi f\|_{L^p(E\cap\mathcal B_R)}\lesssim\delta^{-1}R^{-1-c_\ast p_{\rm small}/4p}\|f\|_{L^p(W_{B_R^x})}.
\end{equation}
Since $\mathscr T^\chi f=\mathscr Tf$, \eqref{eq:sec2-local-weighted-target} follows with $c_{\ast\ast}=c_\ast p_{\rm small}/4p$.

\section{Preliminaries for the proof of Theorem~\ref{0524.thm54}}\label{sec:prelim}
In this section, we normalize the curve to be close to the moment curve, and then record the estimates used in the broad-narrow analysis.

\subsection{Localization of a nondegenerate curve}\label{seq:loc-curve}

Let $\gamma_{\rm mo}(s):=(s,s^2,\ldots,s^n)$. Since the cutoff $\psi$ is compactly supported in $(-1,1)$, we may first localize to finitely many compact subintervals of $(-1,1)$. By the standard normalization for a nondegenerate curve using Taylor expansion on short intervals, as used in \cite[Proposition~6.3]{gan2025sharplocalsmoothingestimates}, we can find a change of variables $s=s_0+\rho t$ and an invertible linear map $\mathsf T$ such that
\begin{equation}
\widetilde\Gamma(t)
:=
\mathsf T(\gamma(s_0+\rho t),1)
=
(\widetilde\gamma(t),1),
\end{equation}
where
\begin{equation}\label{eq:ms-moment-curve-reduction}
\widetilde\gamma(t)
=
\gamma_{\rm mo}(t)+E_{\rm loc}(t),
\qquad
\|E_{\rm loc}\|_{C^{N_0}}
\leq
\varepsilon_{\rm loc}
\end{equation}
on a fixed neighborhood of $[-1,1]$.

Choose $N_0\geq 2n+1$ sufficiently large for the cone and the multilinear estimates below to hold. We then choose $\varepsilon_{\rm loc}>0$ sufficiently small to ensure the perturbation bounds required for applying estimates from \cite{MR3783217}. Both choices are independent of $R$ and of the intervals used in subsequent rescalings.
On the frequency side, the map $\mathsf T^{-T}$ carries frequency plates for $\gamma$ to those for $\widetilde{\gamma}$. Indeed, if $\widetilde\zeta:=\mathsf T^{-T}\zeta$, then
\begin{equation}
\left\langle
\widetilde\zeta,
\widetilde\Gamma^{(k)}(t)
\right\rangle
=
\rho^k
\left\langle
\zeta,
\Gamma_{\rm old}^{(k)}(s_0+\rho t)
\right\rangle
\qquad
(0\leq k\leq n),
\end{equation}
where $\Gamma_{\rm old}(s):=(\gamma(s),1)$ denotes the lifted curve before normalization. By abuse of notation, we continue to write $\gamma$ for $\widetilde \gamma$.
\medskip

Put $\Gamma(s):=(\gamma(s),1)$. Since $\Gamma(s),\Gamma'(s),\ldots,\Gamma^{(n)}(s)$ are linearly independent, let $\Xi(s)$ be the unique smooth vector satisfying
\begin{equation}\label{eq:dual-frequency-curve}
\langle\Xi(s),\Gamma^{(k)}(s)\rangle=0
\quad(0\leq k\leq n-1),
\qquad
\langle\Xi(s),\Gamma^{(n)}(s)\rangle=1.
\end{equation}
Differentiating \eqref{eq:dual-frequency-curve} gives
\begin{equation}\label{eq:ms-dual-anti-triangular}
\langle\Xi^{(j)},\Gamma^{(k)}\rangle=0\quad(j+k<n),\qquad \langle\Xi^{(j)},\Gamma^{(n-j)}\rangle=(-1)^j,
\end{equation}
so $\Xi$ satisfies
\begin{equation}
 \det{ \big( \Xi(s),\Xi'(s),\ldots,\Xi^{(n)}(s) \big)} \neq 0.
\end{equation}

By \eqref{eq:ms-moment-curve-reduction}, after applying some linear transformation, we may write $\Xi(s)=a(s)(\kappa(s),1)$ for some smooth curve $\kappa$ satisfying
\begin{equation}\label{eq:ms-dual-graph-chart}
\|\kappa-\gamma_{\rm mo}\|_{C^{N_0-n}([-1,1])}\lesssim\varepsilon_{\rm loc},
\end{equation}
where $a(s)$ is a smooth function satisfying $|a(s)| \sim 1$. Denote by $\mathfrak K$ the collection of curves $\kappa$, defined with uniform $C^{N_0-n}$ bounds on the fixed neighborhood of $[-1,1]$ specified above and satisfying \eqref{eq:ms-dual-graph-chart}.
\medskip

For fixed constants $0<a_-<1<a_+<\infty$, define the following collection of curves 
\begin{equation}\label{eq:ms-uniform-source-class-definition}
\mathfrak E:=\left\{\eta_{\varsigma,a,\kappa}(t)=\varsigma a\int_0^t(\kappa(u),1)\,du:\ \kappa\in\mathfrak K,\ \varsigma\in\{+,-\},\ a\in[a_-,a_+]\right\}.
\end{equation}
Since $\eta'=\varsigma a(\kappa,1)$ and $\kappa$ is a small $C^n$ perturbation of $\gamma_{\rm mo}$, there are constants $C_N^{\mathfrak E}<\infty$ and $\nu_{\det}>0$ such that, for every integer $0\leq N\leq N_0-n+1$,
\begin{equation}\label{eq:ms-uniform-source-class}
\sup_{\eta\in\mathfrak E}\|\eta\|_{C^N([-1,1])}\leq C_N^{\mathfrak E},\qquad \inf_{\eta\in\mathfrak E}\inf_{t\in[-1,1]}\left|\det\bigl(\eta'(t),\ldots,\eta^{(n+1)}(t)\bigr)\right|\geq\nu_{\det}.
\end{equation}
Let $J_{\rm sc}$ be a fixed compact interval in $(0,\infty)$. For $\kappa\in\mathfrak K$, write
\begin{equation}\label{eq:conic-pieces}
\Sigma_I^\kappa:=\{\lambda(\kappa(s),1):\lambda\in J_{\rm sc},\ s\in I\}
\end{equation}
and simply $\Sigma_I$ when $\kappa$ is fixed.

\begin{lemma}[Rescaling lemma]\label{lem:ms-parent-normalization}
Let $J\subset [-\frac12,\frac12]$ have center $s_J$ and length $0<\rho\leq1$. Define the $n \times n$ matrix $\mathsf M_{s_J}$ and the linear transformation $\mathsf A_J$ by
\begin{equation}\label{eq:ms-AJ-definition}
\mathsf M_{s_J}:=\left[\kappa'(s_J),\frac{\kappa''(s_J)}{2!},\ldots,\frac{\kappa^{(n)}(s_J)}{n!}\right],\qquad
\mathsf A_J(\xi',\xi_{n+1}):=\left(D_\rho\mathsf M_{s_J}^{-1}(\xi'-\xi_{n+1}\kappa(s_J)),\xi_{n+1}\right),
\end{equation}
where $D_\rho:=\operatorname{diag}(\rho^{-1},\ldots,\rho^{-n})$.
\begin{enumerate}
\item We have $\|\mathsf A_J\|\lesssim\rho^{-n}$, $\|\mathsf A_J^{-1}\|\lesssim1$, and $|\det\mathsf A_J|\sim\rho^{-n(n+1)/2}$. Moreover,
\begin{equation}
 \mathsf A_J(\kappa(s_J+\rho v),1)=(\widetilde\kappa_J(v),1)\qquad\text{for some $\widetilde\kappa_J\in\mathfrak K$},
\end{equation}
with $\mathsf A_J(\kappa^{(j)}(s_J+\rho v),0)=\rho^{-j}(\widetilde\kappa_J^{(j)}(v),0)$, $1 \leq j \leq n$.

\item Let $r\geq1$. If $I\subset J$, $\operatorname{supp}\widehat G\subset N_{r^{-1}}(\Sigma_I^\kappa)$, and $\widetilde G(y):=G(\mathsf A_J^Ty)$, then
\begin{equation}\label{eq:ms-AJ-support}
\operatorname{supp}\widehat{\widetilde G}\subset N_{1/R_J}(\Sigma_{\widetilde I}^{\widetilde\kappa_J})
\end{equation}
with $\widetilde I:=(I-s_J)/\rho$ and $R_J:=r\rho^n$.
\end{enumerate}
\end{lemma}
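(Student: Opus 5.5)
The plan is a direct Taylor-expansion computation for part (1), followed by an inclusion check on Fourier supports for part (2).

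\textbf{Part (1), the matrix and the bounds.} By \eqref{eq:ms-dual-graph-chart}, $\kappa$ is $C^{N_0-n}$-close to $\gamma_{\rm mo}$. Hence $\mathsf M_{s_J}$ is a small perturbation of $\mathsf M^{\rm mo}_{s_J}$, whose columns are $\gamma_{\rm mo}^{(j)}(s_J)/j!$. This matrix is upper triangular with unit diagonal and has entries bounded uniformly for $|s_J|\le\frac12$. Therefore $\|\mathsf M_{s_J}\|,\|\mathsf M_{s_J}^{-1}\|\lesssim1$ and $|\det\mathsf M_{s_J}|\sim1$. Since $\rho\le1$, we have $\|D_\rho\|=\rho^{-n}$, $\|D_\rho^{-1}\|\le1$ and $\det D_\rho=\rho^{-n(n+1)/2}$. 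The map $\mathsf A_J$ is block triangular, with the shear $\xi'\mapsto\xi'-\xi_{n+1}\kappa(s_J)$ and $|\kappa(s_J)|\lesssim1$. The stated bounds on $\|\mathsf A_J\|$, $\|\mathsf A_J^{-1}\|=\|(\xi',\xi_{n+1})\mapsto(\xi_{n+1}\kappa(s_J)+\mathsf M_{s_J}D_\rho^{-1}\xi',\xi_{n+1})\|$ and $\det\mathsf A_J$ then follow immediately.

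\textbf{Part (1), the rescaled curve.} Define
\[
\widetilde\kappa_J(v):=D_\rho\mathsf M_{s_J}^{-1}\bigl(\kappa(s_J+\rho v)-\kappa(s_J)\bigr),
\]
so that $\mathsf A_J(\kappa(s_J+\rho v),1)=(\widetilde\kappa_J(v),1)$ holds by definition. The derivative identities hold because the $\xi_{n+1}$-component of $(\kappa^{(j)},0)$ vanishes. Consequently $\mathsf A_J(\kappa^{(j)}(s_J+\rho v),0)=(D_\rho\mathsf M_{s_J}^{-1}\kappa^{(j)}(s_J+\rho v),0)$, and this equals $\rho^{-j}(\widetilde\kappa_J^{(j)}(v),0)$ by the chain rule.

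\textbf{The main obstacle: $\widetilde\kappa_J\in\mathfrak K$.} I would Taylor expand $\kappa(s_J+\rho v)-\kappa(s_J)=\sum_{j=1}^n\frac{\rho^jv^j}{j!}\kappa^{(j)}(s_J)+\mathcal R(v)$. The definition of $\mathsf M_{s_J}$ turns the polynomial part into $\mathsf M_{s_J}(\rho v,\ldots,\rho^nv^n)^T$. Applying $D_\rho\mathsf M_{s_J}^{-1}$ then gives exactly $\gamma_{\rm mo}(v)$, plus the error $D_\rho\mathsf M_{s_J}^{-1}\mathcal R(v)$. Here $\mathcal R$ involves $\kappa^{(n+1)}$ and higher, through the integral form of the remainder. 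For $\kappa=\gamma_{\rm mo}$ we have $\mathcal R\equiv0$, because the moment curve is a polynomial of degree $n$. In general $\kappa^{(m)}=(\kappa-\gamma_{\rm mo})^{(m)}$ for $m\ge n+1$, so $\|\kappa^{(m)}\|\lesssim\varepsilon_{\rm loc}$. The $i$-th component of the error carries the factor $\rho^{-i}\cdot\rho^{n+1}\le\rho$, and derivatives in $v$ each contribute a further factor of $\rho$. Hence the error has $C^{N_0-n}$ norm $\lesssim\varepsilon_{\rm loc}$ on a fixed neighborhood of $[-1,1]$, and in fact the bound improves as $\rho$ decreases.

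The delicate point is that the implicit constant in \eqref{eq:ms-dual-graph-chart} must not grow under rescaling, since the lemma is applied iteratively. To handle this, I would define $\mathfrak K$ with a fixed constant and check that the remainder bound is at most $C\varepsilon_{\rm loc}\rho\le C\varepsilon_{\rm loc}$ with the same $C$. I would also need the domain of $\widetilde\kappa_J$ to contain the fixed neighborhood of $[-1,1]$. This requires $s_J+\rho v$ to remain inside the original neighborhood for $|v|$ slightly larger than $1$, which holds since $J\subset[-\frac12,\frac12]$.

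\textbf{Part (2).} Take $\widetilde G(y)=G(\mathsf A_J^Ty)$. Then $\widehat{\widetilde G}(\zeta)=|\det\mathsf A_J|^{-1}\widehat G(\mathsf A_J^{-1}\zeta)$, so $\operatorname{supp}\widehat{\widetilde G}=\mathsf A_J(\operatorname{supp}\widehat G)\subset\mathsf A_J\bigl(N_{r^{-1}}(\Sigma^\kappa_I)\bigr)$. By linearity and part (1), $\mathsf A_J\Sigma^\kappa_I=\Sigma^{\widetilde\kappa_J}_{\widetilde I}$. The norm bound $\|\mathsf A_J\|\lesssim\rho^{-n}$ then sends an $r^{-1}$-neighborhood into a $C\rho^{-n}r^{-1}=C/R_J$-neighborhood. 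The constant $C$ is absorbed into the notation, which is possible if neighborhoods are understood up to constants. Alternatively, one can enlarge $J_{\rm sc}$ slightly and use the convention already implicit in the paper.
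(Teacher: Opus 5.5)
Your proposal is correct and follows the paper's proof. For part (1), Taylor expansion at $s_J$ gives $\widetilde\kappa_J=\gamma_{\rm mo}+D_\rho\mathsf M_{s_J}^{-1}\mathcal R$ with an $O(\varepsilon_{\rm loc}\rho)$ error, which is the paper's $\mathcal R_J$. Part (2) is exactly the identity $\widehat{\widetilde G}=|\det\mathsf A_J|^{-1}\widehat G\circ\mathsf A_J^{-1}$, combined with $\mathsf A_J\Sigma_I^\kappa=\Sigma_{\widetilde I}^{\widetilde\kappa_J}$ and $\|\mathsf A_J\|r^{-1}\lesssim R_J^{-1}$.

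Your checks on the domain of $\widetilde\kappa_J$ and on derivatives of order $>n$ fill in details the paper leaves implicit. Two small remarks:
\begin{itemize}
\item The moment-curve matrix, with entries $\binom{i}{j}s_J^{i-j}$ for $i\ge j$, is lower rather than upper triangular.
\item Your ``same constant $C$'' claim is not what the computation gives, since an $n$-dependent factor from $\|\mathsf M_{s_J}^{-1}\|$ and the Taylor constants appears. This is harmless: $\mathfrak K$ is defined with $\lesssim$, and the rescaling is never iterated in the paper.
\end{itemize}
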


\begin{proof}
(1) For $v\in[-1,1]$, Taylor's formula gives $\widetilde\kappa_J(v)=D_\rho\mathsf M_{s_J}^{-1}(\kappa(s_J+\rho v)-\kappa(s_J))=\gamma_{\rm mo}(v)+\mathcal R_J(v)$ with $\|\mathcal R_J\|_{C^n}\lesssim\varepsilon_{\rm loc}\rho$ and uniform bounds for derivatives of order $n+1,\dots,N_0-n$; hence $\widetilde\kappa_J\in\mathfrak K$. The remaining assertions in (1) follow from uniform invertibility of $\mathsf M_{s_J}$ and direct calculation.

(2) The assertion follows from $\widehat{\widetilde G}(\xi)=|\det\mathsf A_J|^{-1}\widehat G(\mathsf A_J^{-1}\xi)$ and $\|\mathsf A_J\|r^{-1}\lesssim(r\rho^n)^{-1}$.
\end{proof}

Since $\widetilde\kappa_J\in\mathfrak K$, the curve $\eta_{\varsigma,a,\widetilde\kappa_J}$ produced by rescaling still belongs to $\mathfrak E$. Thus the subsequent estimates apply uniformly to curves in the class $\mathfrak E$.

\subsection{Some lemmas}

In the proof of Theorem~\ref{0524.thm54} for small $p$, we use the abstract broad-narrow decomposition \cite[Lemma A.2]{MR4908993}. For a dyadic interval $J$, let $\mathcal D(J;\lambda)$ be the dyadic subintervals of $J$ of length $\lambda$. Let $\operatorname{Sep}_k(J;\lambda)$ be the ordered $k$-tuples $(I_1,\dots,I_k)$ in $\mathcal D(J;\lambda)^k$ satisfying $\operatorname{dist}(I_i,I_j)\geq\lambda\quad(i\neq j)$.

\begin{lemma}[Abstract broad-narrow decomposition]
\label{lem:ms-abstract-bn}
Let $(X,\mu_{\rm meas})$ be a measure space, let $k\geq2$, and let $0<\delta<1$ be dyadic. Suppose that $(F_I)_{|I|\geq\delta}$ is a dyadic decomposition of $F\in L^q(X)$, where $1\leq q<\infty$. For every $\beta_{\rm abs}>0$ there are dyadic numbers $\delta_{\rm nar},\delta_{\rm br}$ and a dyadic constant $C_{\beta_{\rm abs},k}\geq1$ such that
\begin{equation}\label{eq:ms-stopping-scales}
 \delta<\delta_{\rm nar}\lesssim_{\beta_{\rm abs},k}\delta,
 \qquad
 \delta<\delta_{\rm br}\leq1,
\end{equation}
and
\begin{align}
\|F\|_{L^q(X)}
\lesssim_{\beta_{\rm abs},k}{}&
\delta^{-\beta_{\rm abs}}
\left(\sum_{|I|=\delta_{\rm nar}}
\|F_I\|_{L^q(X)}^q\right)^{1/q}
\label{eq:ms-abstract-bn}\\
&+\delta^{-\beta_{\rm abs}}
\left(
\sum_{|J|=C_{\beta_{\rm abs},k}\delta_{\rm br}}
\ \sum_{\mathbf I\in
\operatorname{Sep}_k(J;\delta_{\rm br})}
\left\|
\prod_{j=1}^k|F_{I_j}|^{1/k}
\right\|_{L^q(X)}^q
\right)^{1/q}.
\notag
\end{align}
The second sum is understood to be empty if $C_{\beta_{\rm abs},k}\delta_{\rm br}>1$.
\end{lemma}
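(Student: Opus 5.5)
The plan is to prove a pointwise two-alternative inequality at a single scale and then iterate it down the dyadic tree. Only $O(\log(1/\delta)/\log K)$ generations occur, so the constant lost at each generation accumulates into at most $\delta^{-\beta_{\rm abs}}$. Here $(F_I)$ is a dyadic decomposition in the sense that $F_J=\sum_{I\subset J,\,|I|=|J|/K}F_I$ for every $K$ dyadic. Fix a dyadic $K=K(\beta_{\rm abs},k)\geq 2$, to be chosen large, and work with the $K$-adic scales $\lambda_j:=K^{-j}$, which are dyadic.

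\emph{Step 1 (one-scale dichotomy).} Fix $x\in X$ and an interval $J$ of length $\lambda$, and let $S$ be the set of $K$-children $I$ of $J$ with $|F_I(x)|\geq |F_J(x)|/(2K)$.
\begin{itemize}
\item If $S$ contains $k$ intervals pairwise at distance $\geq\lambda/K$, they form a tuple $\mathbf I\in\operatorname{Sep}_k(J;\lambda/K)$. Then
\[
|F_J(x)|\leq 2K\prod_{j}|F_{I_j}(x)|^{1/k}.
\]
\item Otherwise, take a maximal $(\lambda/K)$-separated subfamily of $S$; it has fewer than $k$ elements. Every element of $S$ is adjacent to one of them, so $\#S\leq 3(k-1)$. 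The children outside $S$ contribute at most $K\cdot|F_J(x)|/(2K)$ in total, so
\[
|F_J(x)|\leq 2\sum_{I\in S}|F_I(x)|\leq 6(k-1)\max_{|I|=\lambda/K,\ I\subset J}|F_I(x)|.
\]
\end{itemize}
In both cases,
\[
|F_J(x)|\leq C_k\max_{I}|F_I(x)|+2K\max_{\mathbf I\in\operatorname{Sep}_k(J;\lambda/K)}\prod_{j}|F_{I_j}(x)|^{1/k}.
\]

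\emph{Step 2 (iteration).} Start from $J=[0,1]$ and apply Step 1 repeatedly to the maximizing child. Stop at the first generation $m$ with $K^{-m}\geq\delta$ but $K^{-m-1}<\delta$; then $\delta_{\rm nar}:=K^{-m}\in[\delta,K\delta)$, and I perturb by one dyadic factor if the strict inequality $\delta<\delta_{\rm nar}$ is needed. This gives pointwise
\[
|F(x)|\leq C_k^m\max_{|I|=\delta_{\rm nar}}|F_I(x)|+\sum_{j=0}^{m-1}C_k^{j}\,2K\max_{|J|=\lambda_j}\ \max_{\mathbf I\in\operatorname{Sep}_k(J;\lambda_{j+1})}\prod_{i}|F_{I_i}(x)|^{1/k}.
\]
Since $m\leq \log(1/\delta)/\log K$, we have $C_k^m\leq\delta^{-\log C_k/\log K}$. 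I choose $K$ so large that this is at most $\delta^{-\beta_{\rm abs}/2}$; the factor $2K$ is then a constant depending only on $\beta_{\rm abs},k$.

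\emph{Step 3 (integration and pigeonholing).} Bound each $\max$ by the corresponding $\ell^q$ sum, take $L^q(X)$ norms, and apply Minkowski's inequality. The broad part is a sum of $m\lesssim\log(1/\delta)$ terms. Either pigeonhole the largest one, or use $m\leq\delta^{-\beta_{\rm abs}/2}$ after enlarging the constant, to select a single scale $\delta_{\rm br}:=\lambda_{j+1}>\delta$. The broad tuples then lie in intervals of length $\lambda_j=K\delta_{\rm br}$, which gives $C_{\beta_{\rm abs},k}=K$. If $K\delta_{\rm br}>1$, that level does not occur and the broad sum is empty, consistent with the convention in the statement. Combining gives \eqref{eq:ms-abstract-bn}.

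The only delicate point is Step 1, specifically the counting argument showing that the absence of $k$ separated large children forces $\#S\lesssim_k1$ with a constant independent of $K$. The whole argument hinges on this: it is what makes $C_k^m$ a power $\delta^{-o(1)}$ as $K\to\infty$. The rest is bookkeeping of dyadic versus $K$-adic scales.
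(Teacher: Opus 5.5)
Your proposal is correct and is the argument the paper has in mind: its proof just says to iterate the Bourgain--Guth-type one-scale dichotomy (either $k$ separated significant children, or $|F_J|$ is controlled by $O_k(1)$ children with a constant independent of $K$) down the $K$-adic tree via the identities $F_J=\sum_{I\subset J,\ |I|=|J|/K}F_I$, losing $C_k^{\log(1/\delta)/\log K}\le\delta^{-\beta_{\rm abs}}$ and then pigeonholing a single broad scale, exactly as in your Steps 1--3. The one loose end is the strict inequality $\delta<\delta_{\rm nar}$: narrow terms at different scales cannot be compared pointwise, so rather than ``perturbing by a dyadic factor'' you should stop at the first $m$ with $K^{-m-1}\le\delta$, which gives $\delta_{\rm nar}=K^{-m}\in(\delta,K\delta]$ and makes every broad scale $\lambda_{j+1}\ge\lambda_m$ strictly larger than $\delta$ as well.
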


\begin{proof}
Iterate the variant of Bourgain--Guth decomposition \cite{MR2860188} as in \cite{MR3161099} and use the dyadic identities among the functions $F_I$.
\end{proof}

We also use the following local orthogonality estimate on $r$-cubes.
\begin{lemma}[Local orthogonality]
\label{lem:ms-selected-local-orthogonality}
Let $Q$ be an $r$-cube and let $(H_\alpha)_\alpha$ be a finite family such that
\begin{equation}
 \sum_\alpha\mathbf{1}_{\operatorname{supp}\widehat{H_\alpha}+B(0,cr^{-1})}\lesssim1
\end{equation}
for some fixed $c>0$. Then
\begin{equation}\label{eq:ms-selected-local-orthogonality}
 \int\left|\sum_{\alpha}H_\alpha\right|^2W_{Q}
 \lesssim
 \int\sum_{\alpha}|H_\alpha|^2W_{Q}.
\end{equation}
\end{lemma}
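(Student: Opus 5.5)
The plan is to expand the square $|\sum_\alpha H_\alpha|^2$ into $\sum_{\alpha,\beta}H_\alpha\overline{H_\beta}$ and move the weight to the frequency side. Since $W_Q$ is not compactly Fourier supported, I will first dominate it by a weight with compact Fourier support. Fix once and for all a Schwartz function $\phi$ on $\R^m$ with $\phi\geq0$, $\phi\gtrsim1$ on $Q_1^{(m)}$, and $\operatorname{supp}\widehat\phi\subset B(0,c/2)$; for instance take $\phi=|\check\chi|^2$ with $\chi$ supported in $B(0,c/4)$ and suitably normalized. Then set $\phi_Q(x):=\phi(T_Q^{-1}x)$, so that $\widehat{\phi_Q}$ is supported in $B(0,c r^{-1}/2)$ up to the harmless normalization of $T_Q$ (an $r$-cube scaled by $r$, adjusting $c$ by a constant).

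The weight $W_Q$ itself is handled by the standard device of writing it as a superposition of translates of such a bump. Since $W^{(m)}(x)=(1+|x|)^{-100m}$ satisfies $W(x)\lesssim\sum_{k\in\mathbb Z^m}W(k)\,\phi(x-k)$ and conversely $\sum_k W(k)\phi(x-k)\lesssim W(x)$ (both because $\phi$ decays faster than any polynomial and $W(x-k)\gtrsim W(x)W(k)$-type submultiplicativity with $W(x)\lesssim W(k)(1+|x-k|)^{100m}$), it suffices to prove the inequality with $W_Q$ replaced by each translate $\phi_{Q+rk}$ uniformly in $k$, and then sum with the coefficients $W(k)$. Translation does not affect the Fourier support of $\phi$, so it is enough to treat a single $\phi_Q$.

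For such a $\phi_Q$, Plancherel gives $\int H_\alpha\overline{H_\beta}\,\phi_Q=\langle \widehat{H_\alpha}, \widehat{H_\beta}*\widehat{\phi_Q}\rangle$ (up to conjugation conventions), which vanishes unless $\operatorname{supp}\widehat{H_\alpha}$ meets $\operatorname{supp}\widehat{H_\beta}+B(0,cr^{-1}/2)$, i.e. unless the enlarged sets $\operatorname{supp}\widehat{H_\alpha}+B(0,cr^{-1})$ and $\operatorname{supp}\widehat{H_\beta}+B(0,cr^{-1})$ intersect. Letting $\alpha\sim\beta$ denote this relation, I bound
$$\int\Big|\sum_\alpha H_\alpha\Big|^2\phi_Q=\sum_{\alpha\sim\beta}\int H_\alpha\overline{H_\beta}\phi_Q\leq\sum_{\alpha\sim\beta}\tfrac12\int(|H_\alpha|^2+|H_\beta|^2)\phi_Q,$$
using $\phi_Q\geq0$ and AM--GM. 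Finally, the bounded overlap hypothesis implies that each $\alpha$ has $O(1)$ neighbours $\beta$: indeed, fixing a point $\xi$ in the intersection (and noting sets are essentially of diameter $\gtrsim r^{-1}$), all such $\beta$ have enlarged supports containing points of a $2cr^{-1}$ ball around points of $\alpha$'s support. This is the step I expect to be the only delicate one, since bounded overlap at a point does not immediately bound the number of neighbours when supports are large. I would handle it by not counting neighbours but instead proving the estimate directly as $\|\sum_\alpha \widehat{H_\alpha}*\widehat{\phi_Q}^{1/2}\|$-type almost orthogonality. Concretely, I would write $\phi_Q=|\psi_Q|^2$ with $\widehat{\psi_Q}$ supported in $B(0,cr^{-1}/4)$. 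Then $\int|\sum H_\alpha|^2\phi_Q=\|\sum_\alpha \widehat{H_\alpha\psi_Q}\|_2^2$, and the functions $\widehat{H_\alpha\psi_Q}$ are supported in $\operatorname{supp}\widehat{H_\alpha}+B(0,cr^{-1})$, which have bounded overlap. Cauchy--Schwarz pointwise on the frequency side then gives $\lesssim\sum_\alpha\|H_\alpha\psi_Q\|_2^2=\sum_\alpha\int|H_\alpha|^2\phi_Q$. This cleaner route avoids the neighbour count entirely, so I will use it. Summing over translates with weights $W(k)$ and using the two-sided comparison with $W_Q$ completes the proof.
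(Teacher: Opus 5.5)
Your proof is correct and is essentially the paper's argument. You dominate $W_Q$ by a nonnegative combination of functions $|\psi|^2$ with $\operatorname{supp}\widehat\psi$ in a ball of radius $\lesssim cr^{-1}$, apply Plancherel and the bounded-overlap hypothesis to each $\|\psi\sum_\alpha H_\alpha\|_2^2$, and resum; you were right to drop the neighbour-counting route. The only difference is how the weight is decomposed: you use lattice translates of a single $r$-scale bump, which gives a two-sided comparison and returns exactly $W_Q$, whereas the paper uses the concentric dilates $2^jC_0Q$ with bumps $\psi_{j,Q}$ whose Fourier support shrinks like $2^{-j}r^{-1}$, ending with $W_{C'Q}\lesssim W_Q$.
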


\begin{proof}
Let $x_Q$ be the center of $Q$. We have
\begin{equation}
 W_{Q}(x)\lesssim\sum_{j\ge0}2^{-100(n+1)j}\mathbf 1_{2^jC_0Q}(x)
\end{equation}
for some $C_0=C_0(n)$. For each $j\ge0$, choose a Schwartz function $\psi_{j,Q}$ satisfying $|\psi_{j,Q}|\ge1$ on $2^jC_0Q$, $\operatorname{supp}\widehat{\psi_{j,Q}}\subset B(0,(c/2)2^{-j}r^{-1})$, and
\begin{equation}
 |\psi_{j,Q}(x)|\lesssim_N\left(1+\frac{|x-x_Q|}{2^jr}\right)^{-N}
\end{equation}
for every fixed $N$. Such functions are obtained by translating and dilating one fixed Schwartz function with compact Fourier support. Plancherel's theorem gives
\begin{equation}
 \int_{2^jC_0Q}\left|\sum_\alpha H_\alpha\right|^2
 \leq\left\|\psi_{j,Q}\sum_\alpha H_\alpha\right\|_2^2
 \lesssim\sum_\alpha\|\psi_{j,Q}H_\alpha\|_2^2,
\end{equation}
because $\operatorname{supp}(\widehat{\psi_{j,Q}}*\widehat H_\alpha)\subset\operatorname{supp}\widehat H_\alpha+B(0,cr^{-1})$. Multiplying by $2^{-100(n+1)j}$ and summing in $j$, then taking $N>200(n+1)$, yields
\begin{equation}
 \sum_{j\ge0}2^{-100(n+1)j}|\psi_{j,Q}(x)|^2\lesssim W_{C'Q}(x)
\end{equation}
for a fixed $C'=C'(c,n)$. The preceding estimates imply \eqref{eq:ms-selected-local-orthogonality}.
\end{proof}

\subsection{Cone estimates, rescaling, and interpolation}

We record the sharp cone $\ell^{q/2}$ estimate of Gan-Maldague-Oh. Let us first introduce the necessary definitions.

\begin{definition}
\label{def:ms-admissible-cone-blocks}
Fix a compact interval $J\subset(0,\infty)$. For $\eta\in\mathfrak E$, $0<\rho\leq1$, and $s\in [-\frac12,\frac12]$, define
\begin{equation}
\mathfrak B_\eta(s;\rho)
:=
\left\{
\sum_{j=0}^{n}\lambda_j\eta^{(j+1)}(s):
\lambda_0\in J,\quad
|\lambda_j| \lesssim \rho^j
\quad(1\leq j\leq n)
\right\}.
\end{equation}
Let $s_\theta$ denote the center of $\theta\in\mathcal G_\delta$. A finite family $(K_\theta)_{\theta\in\mathcal G_\delta}$ is called admissible at scale $\delta^{-n}$ if
\begin{equation*}
    \operatorname{supp}\widehat K_\theta
\subset
\mathfrak B_\eta(s_\theta;\delta)
\end{equation*}
for every $\theta$.
\end{definition}

The following estimate applies to the admissible families in Definition~\ref{def:ms-admissible-cone-blocks}.
\begin{proposition}{\cite[Theorem~5.3]{gan2025sharplocalsmoothingestimates}}
\label{cor:ms-family-form-gmo}
Fix $4\leq q\leq n^2+n-2$ and $\varepsilon>0$, and choose $N_0$ sufficiently large depending on $n,q,\varepsilon$. Let $R\geq1$ and $\eta\in\mathfrak E$. If $(K_\theta)_{\theta\in\mathcal G_{R^{-1/n}}}$ is admissible at scale $R$, then
\begin{equation}\label{eq:ms-family-form-gmo}
\left\|\sum_\theta K_\theta\right\|_{L^q}
\lesssim_{\mathfrak E,q,\varepsilon}
R^{\frac1n\left(\frac12-\frac2q\right)+\varepsilon}
\left\|
\left(\sum_\theta|K_\theta|^{q/2}\right)^{2/q}
\right\|_{L^q}.
\end{equation}
\end{proposition}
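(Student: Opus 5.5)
The plan is to obtain the estimate directly from \cite[Theorem~5.3]{gan2025sharplocalsmoothingestimates}. The remaining work is to check that our blocks $\mathfrak B_\eta(s;\delta)$ are, up to harmless dilation, the canonical conical blocks used there, and that the implicit constants are uniform over the class $\mathfrak E$. Fix $\eta\in\mathfrak E$ and write $\eta'=\varsigma a(\kappa,1)$. The set $\mathfrak B_\eta(s;\delta)$ consists of vectors $\lambda_0\eta'(s)+\sum_{j\ge1}\lambda_j\eta^{(j+1)}(s)$ with $\lambda_0\in J$ and $|\lambda_j|\lesssim\delta^j$. Because $\eta',\dots,\eta^{(n+1)}$ are uniformly linearly independent by \eqref{eq:ms-uniform-source-class}, this is comparable to a box in the frame $(\eta'(s),\ldots,\eta^{(n+1)}(s))$ with dimensions $1,\delta,\ldots,\delta^n$. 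Applying Gram--Schmidt to this frame turns it into a box of the same dimensions in the Frenet frame of the cone $\{\lambda(\kappa(s),1)\}$, with comparability constants depending only on $C^{\mathfrak E}_{n+1}$ and $\nu_{\det}$. With $\delta=R^{-1/n}$, these are exactly the $(1,R^{-1/n},\dots,R^{-1})$ plates, that is, the $R^{-1}$-neighbourhood pieces of $\Sigma^\kappa_\theta$ that appear in GMO's cone decoupling / $\ell^{q/2}$ theorem.

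\textbf{Step 1: normalize the support condition.} The GMO theorem is stated for one fixed nondegenerate cone and for functions Fourier supported in the canonical blocks of a fixed dyadic radial interval. I would first split $J$ into $O(1)$ subintervals on which $\lambda_0$ is comparable to a constant. On each piece the plates $\mathfrak B_\eta(s_\theta;\delta)$ sit inside $C$-dilates of GMO's canonical plates $\theta$. The triangle inequality over these $O(1)$ pieces costs only a constant, because the right-hand side of \eqref{eq:ms-family-form-gmo} is unaffected by splitting each $K_\theta$ into finitely many pieces; here one can use a smooth partition in $\lambda_0$ adapted to the plate. Since the plates overlap boundedly, passing to $C$-dilates also changes only constants.

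\textbf{Step 2: uniformity in $\eta$.} The GMO proof proceeds by induction on scales and rescaling; compare Lemma~\ref{lem:ms-parent-normalization}. It uses the curve only through bounds on finitely many derivatives and a lower bound on the torsion determinant. Concretely, it needs $N_0(n,q,\varepsilon)$ derivatives, and the class $\mathfrak E$ is stable under the rescaling in Lemma~\ref{lem:ms-parent-normalization}. Rather than redo the argument, I would invoke their stated uniformity: their constants depend only on $C^{N_0}$ norms and nondegeneracy, together with a compactness argument on $\mathfrak K$ in $C^{N_0-n}$. Choosing $N_0$ large depending on $(n,q,\varepsilon)$ then gives a constant depending only on $\mathfrak E,q,\varepsilon$.

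\textbf{Main obstacle.} I expect the delicate point to be Step 2: confirming that the constant in \cite[Theorem~5.3]{gan2025sharplocalsmoothingestimates} depends only on finitely many derivatives of the curve and on $\nu_{\det}$, and not on the specific curve. If the published statement is only for a fixed curve, I would trace their induction on scales. Each step applies either a rescaling of the type in Lemma~\ref{lem:ms-parent-normalization}, which keeps us inside $\mathfrak E$, or a lower-dimensional estimate for perturbed moment curves, which is uniform by \cite{MR3783217} since $\varepsilon_{\rm loc}$ is small. The induction hypothesis can therefore be formulated uniformly over $\mathfrak E$ from the outset, and this closes the argument.
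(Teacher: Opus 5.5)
Your proposal is correct and follows the paper, which imports this estimate from \cite[Theorem~5.3]{gan2025sharplocalsmoothingestimates} without further argument; the block comparison and the uniformity over $\mathfrak E$ that you sketch are exactly what that citation tacitly relies on. One point worth making explicit is that GMO's plates $\Pi_R^\varsigma(\theta)$ are comparable to $\mathfrak B_\eta(s_\theta;R^{-1/n})$ when $\eta'$ is a multiple of the dual curve $\Xi$ of $\Gamma$; so for a general $\eta\in\mathfrak E$ (for instance a rescaled curve $\widetilde\kappa_J$), you apply their theorem to the curve whose dual is $(\kappa,1)$, which exists by biduality after a fixed linear change of coordinates.
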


Rescaling an interval of length $\rho$ gives the following estimate at scale $R \rho^n$.

\begin{lemma}[Rescaled cone $\ell^{q/2}$ estimate]
\label{lem:ms-rescaled-envelope}
Fix $4\leq q\leq n^2+n-2$ and $\varepsilon>0$, and choose $N_0$ as in Proposition~\ref{cor:ms-family-form-gmo}. Let $\eta\in\mathfrak E$ and let $I\subset [-\frac12,\frac12]$ be an interval of length $\rho$.
Let $(H_\theta)_{\theta\in\mathcal G_{R^{-1/n}}}$ be admissible at scale $R$, and
suppose that $H_\theta\equiv0$ whenever $s_\theta\notin I$. If $ R \rho^n \geq1$, then
\begin{equation}\label{eq:ms-rescaled-envelope}
\left\|\sum_\theta H_\theta\right\|_{L^q(B_R^{(n+1)})}
\lesssim_{\varepsilon,q,n,\mathfrak K}
 \big( R \rho^n \big)^{\frac1n\left(\frac12-\frac2q\right)+\varepsilon}
\left\|
\left(\sum_\theta|H_\theta|^{q/2}\right)^{2/q}
\right\|_{L^q(W_{B_R^{(n+1)}})}.
\end{equation}
\end{lemma}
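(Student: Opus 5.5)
We will prove Lemma~\ref{lem:ms-rescaled-envelope} by parabolic-type rescaling with the map $\mathsf A_J$ of Lemma~\ref{lem:ms-parent-normalization}, applying Proposition~\ref{cor:ms-family-form-gmo} at the reduced scale $R\rho^n$, and then localizing back to $B_R^{(n+1)}$.

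\textbf{Step 1: reduce to an interval in $[-\frac12,\frac12]$ and write the Fourier supports in conic form.} Let $J\supset I$ be a concentric interval of comparable length $\sim\rho$ (enlarging slightly if needed). Since $\eta'=\varsigma a(\kappa,1)$, each block $\mathfrak B_\eta(s_\theta;R^{-1/n})$ lies in an $O(R^{-1})$-anisotropic neighborhood of the cone $\Sigma^\kappa_{\theta}$. It is not merely a Euclidean neighborhood: the block has dimensions $1,R^{-1/n},\dots,R^{-1}$ along $\eta'(s_\theta),\dots,\eta^{(n+1)}(s_\theta)$.

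\textbf{Step 2: apply $\mathsf A_J$ and identify the image blocks.} Set $\widetilde H_\theta(y):=H_\theta(\mathsf A_J^Ty)$. By part (1) of Lemma~\ref{lem:ms-parent-normalization}, $\mathsf A_J$ maps $\eta^{(j+1)}(s_J+\rho v)$ to $\rho^{-j}\widetilde\eta^{(j+1)}(v)$, where $\widetilde\eta:=\eta_{\varsigma,a,\widetilde\kappa_J}\in\mathfrak E$. The $j=0$ term also carries the fixed nonzero factor; the $\xi_{n+1}$ coordinate is preserved, so the $\lambda_0$ range stays compact.

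Hence $\mathsf A_J\mathfrak B_\eta(s_\theta;R^{-1/n})\subset\mathfrak B_{\widetilde\eta}(\widetilde s_\theta;R^{-1/n}\rho^{-1})$ with $\widetilde s_\theta=(s_\theta-s_J)/\rho$. Note that $R^{-1/n}\rho^{-1}=(R\rho^n)^{-1/n}$. Thus $\{\widetilde H_\theta\}$ is indexed by intervals of length $(R\rho^n)^{-1/n}$ in $\widetilde I\subset[-\frac12,\frac12]$ (after a harmless translation or a further $O(1)$ splitting of $J$), and the family is admissible at scale $R\rho^n\ge1$. The constant in the condition $|\lambda_j|\lesssim$ changes by $O(1)$, and this is absorbed by Taylor's remainder bounds, since $\eta^{(j+1)}(s)$ varies by $O(R^{-1/n})$ across $\theta$.

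\textbf{Step 3: apply Proposition~\ref{cor:ms-family-form-gmo} locally.} Proposition~\ref{cor:ms-family-form-gmo} is a global $L^q$ estimate, so we first localize. Multiply $\sum_\theta H_\theta$ by a Schwartz weight $\phi_{B_R}$ with $|\phi_{B_R}|\gtrsim1$ on $B_R^{(n+1)}$ and $\widehat{\phi_{B_R}}$ supported in $B(0,R^{-1})$. Adding an $R^{-1}$ ball only thickens each block by $O(1)$ times its smallest width, so $(\phi_{B_R}H_\theta)$ is still admissible up to constants.

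The rescaled function $\widetilde{\phi_{B_R}H_\theta}$ then has Fourier support in $\mathfrak B_{\widetilde\eta}(\widetilde s_\theta;(R\rho^n)^{-1/n})$. This holds because $\|\mathsf A_J\|R^{-1}\lesssim(R\rho^n)^{-1}$ by part (2) of Lemma~\ref{lem:ms-parent-normalization}, and more precisely $\mathsf A_J$ sends the $R^{-1}$ ball into the block thickening.

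Apply Proposition~\ref{cor:ms-family-form-gmo} to $\{\widetilde{\phi_{B_R}H_\theta}\}$ and undo the change of variables. Since both sides transform by the same Jacobian factor $|\det\mathsf A_J|^{1/q}$, we obtain
\[
\Big\|\sum_\theta H_\theta\Big\|_{L^q(B_R)}\lesssim(R\rho^n)^{\frac1n(\frac12-\frac2q)+\varepsilon}\Big\|\Big(\sum_\theta|H_\theta|^{q/2}\Big)^{2/q}\phi_{B_R}\Big\|_{L^q},
\]
and $|\phi_{B_R}|\lesssim W_{B_R}$ closes the argument.

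\textbf{Main obstacle.} The delicate point is Step 2's verification that $\mathsf A_J$ maps each block, including the $\lambda_j\eta^{(j+1)}$ components and the added $R^{-1}$ thickening, into a uniformly admissible block for $\widetilde\eta$ with uniform constants. This should follow from part (1) of Lemma~\ref{lem:ms-parent-normalization} together with Taylor expansion of $\eta^{(j+1)}$ about $s_\theta$.

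Uniformity of the implicit constants over $\mathfrak E$ comes from \eqref{eq:ms-uniform-source-class}, since $\widetilde\kappa_J\in\mathfrak K$.
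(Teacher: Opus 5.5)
Your proposal is correct and follows essentially the same route as the paper. You rescale by $\mathsf A_I$ using Lemma~\ref{lem:ms-parent-normalization}(1) to get admissible blocks at scale $R\rho^n$ centered at $(s_\theta-s_I)/\rho$, apply Proposition~\ref{cor:ms-family-form-gmo}, cancel the common Jacobian, and localize to $B_R$. Your explicit cutoff $\phi_{B_R}$ with $\operatorname{supp}\widehat{\phi_{B_R}}\subset B(0,R^{-1})$, applied before rescaling, and your re-centering of the shifted lattice of rescaled centers simply spell out the localization and admissibility details that the paper leaves implicit.
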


\begin{proof}
Let $\mathsf A_I$ be the linear map from Lemma~\ref{lem:ms-parent-normalization} with $J=I$. Define
$G_\theta(y):=H_\theta(\mathsf A_I^Ty).$
By Lemma~\ref{lem:ms-parent-normalization}(1), the original blocks transform into blocks
at scale $R\rho^n$, centered at $(s_\theta-s_I)/\rho$. Thus Proposition~\ref{cor:ms-family-form-gmo} gives
\begin{equation}
\left\|\sum_\theta G_\theta\right\|_{L^q}
\lesssim
(R\rho^n)^{\frac1n(\frac12-\frac2q)+\varepsilon}
\left\|
\left(\sum_\theta |G_\theta|^{q/2}\right)^{2/q}
\right\|_{L^q}.
\end{equation}
Changing variables $x=\mathsf A_I^Ty$ cancels the common Jacobian factor on both sides. We then obtain
\begin{equation}
\left\|\sum_\theta H_\theta\right\|_{L^q(B_R)}
\lesssim
(R\rho^n)^{\frac1n(\frac12-\frac2q)+\varepsilon}
\left\|
\left(\sum_\theta |H_\theta|^{q/2}\right)^{2/q}
\right\|_{L^q(W_{B_R})}
\end{equation}
by localizing to the $R$-ball $B_R$.
\end{proof}

The following interpolation theorem is proved in \cite[Proposition~6.5]{gan2025sharplocalsmoothingestimates} in the case where $Y=\R^{n+1}$. The same proof works for any measurable set $Y \subset \R^{n+1}$. We do not reproduce the proof here.

\begin{lemma}[Interpolation]
\label{lem:ms-large-p-admissible-interpolation}
Let $\eta\in\mathfrak E$, and let $Y\subset 2B_R\subset\mathbb R^{n+1}$ be measurable. Let $(K_\theta)_{\theta\in\mathcal G_{R^{-1/n}}}$ be admissible at scale $R$. Let
\begin{equation}
 4<q_0<q<q_1\leq n^2+n-2,
 \qquad
 \frac1q=\frac\lambda{q_0}+\frac{1-\lambda}{q_1},
 \qquad 0<\lambda<1.
\end{equation}
Suppose that constants $A_0,A_1\geq1$ satisfy the following: for $j=0,1$, every admissible family $\mathbf G=(G_\vartheta)_{\vartheta\in\mathcal G_{R^{-1/n}}}$ satisfies
\begin{equation}\label{eq:ms-large-p-admissible-endpoints}
 \left\|\sum_\vartheta G_\vartheta\right\|_{L^{q_j}(Y)}
 \leq A_j\left\|\left(\sum_\vartheta |G_\vartheta|^{q_j/2}\right)^{2/q_j}\right\|_{L^{q_j}(\mathbb R^{n+1})}.
\end{equation}
Then, for every $\varepsilon>0$,
\begin{equation}\label{eq:ms-large-p-admissible-interpolation}
 \left\|\sum_\theta K_\theta\right\|_{L^q(Y)}
 \lesssim_{q_0,q,q_1,\varepsilon}
 R^{\varepsilon} A_0^\lambda A_1^{1-\lambda}
 \left\|\left(\sum_\theta |K_\theta|^{q/2}\right)^{2/q}\right\|_{L^q(\mathbb R^{n+1})}.
\end{equation}
The implicit constant is uniform over curves in $\mathfrak E$.
\end{lemma}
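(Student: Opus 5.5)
The plan is to follow the proof of \cite[Proposition~6.5]{gan2025sharplocalsmoothingestimates} and check that nothing in it uses $Y=\R^{n+1}$. The restriction to $Y$ enters only through H\"older's inequality on $Y$, which holds on any measurable set, and through the endpoint hypotheses \eqref{eq:ms-large-p-admissible-endpoints}, which are already stated on $Y$. The difficulty is that the Fourier-support constraint prevents applying Riesz--Thorin or real interpolation directly to the map $(K_\theta)_\theta\mapsto\sum_\theta K_\theta$. So I will first reduce to families in which every piece has essentially constant amplitude and essentially constant pointwise multiplicity. For such families the three square-function norms ($q_0$, $q$, $q_1$) are comparable up to explicit powers of the two parameters.

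\emph{Step 1: wave packet decomposition.} For each $\theta$, let $\mathbb T_\theta$ be a tiling of $\R^{n+1}$ by boxes $T$ dual to $\mathfrak B_\eta(s_\theta;R^{-1/n})$. Write $K_\theta=\sum_{T\in\mathbb T_\theta}K_T$, where $K_T:=\phi_T\cdot K_\theta$ is then re-projected by a multiplier adapted to a slight enlargement of $\mathfrak B_\eta(s_\theta;R^{-1/n})$. Each $K_T$ is essentially supported on $T$ and has Fourier support in a $C$-dilate of the block. I also need the enlarged blocks to still form an admissible family in the sense of Definition~\ref{def:ms-admissible-cone-blocks}, possibly after splitting into $O(1)$ subfamilies; I will check this. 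By local constancy, $|K_\theta|\approx\|K_T\|_\infty=:a_T$ on $T$, up to Schwartz tails.

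\emph{Step 2: pigeonholing.} Discard packets with $a_T\le R^{-C}\max_{T'}a_{T'}$; they are controlled trivially. Sort the rest into dyadic amplitude classes $a_T\sim\lambda$. Then sort further by the multiplicity $\mu$, meaning the number of $\theta$ for which a packet of the class has its support near a given point. This is done on a fixed partition of $2B_R$ into $R^{1/n}$-size cubes, so $\mu$ is constant on each cube of the relevant subcollection. There are $O((\log R)^2)$ classes $(\lambda,\mu)$. For the subfamily $\mathbf K^{\lambda,\mu}=(K^{\lambda,\mu}_\theta)_\theta$ we have, pointwise where it lives,
\[
\Bigl(\sum_\theta|K^{\lambda,\mu}_\theta|^{r/2}\Bigr)^{2/r}\approx\lambda\,\mu^{2/r}\qquad\text{for } r\in\{q_0,q,q_1\}.
\]
Hence, writing $|S|$ for the measure of the support region $S$ of the subfamily,
\[
\Bigl\|\bigl(\textstyle\sum_\theta|K_\theta^{\lambda,\mu}|^{r/2}\bigr)^{2/r}\Bigr\|_{L^r}\approx\lambda\,\mu^{2/r}|S|^{1/r}.
\]

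\emph{Step 3: interpolate each class.} H\"older on $Y$ gives
\[
\bigl\|\textstyle\sum_\theta K^{\lambda,\mu}_\theta\bigr\|_{L^q(Y)}\le\|\cdot\|_{L^{q_0}(Y)}^{\lambda}\|\cdot\|_{L^{q_1}(Y)}^{1-\lambda}.
\]
Applying the endpoint hypotheses to the two factors bounds this by
\[
A_0^\lambda A_1^{1-\lambda}\,\lambda\,\mu^{2\lambda/q_0+2(1-\lambda)/q_1}|S|^{\lambda/q_0+(1-\lambda)/q_1}.
\]
Since $\frac1q=\frac\lambda{q_0}+\frac{1-\lambda}{q_1}$, this equals $A_0^\lambda A_1^{1-\lambda}\lambda\mu^{2/q}|S|^{1/q}$, which is comparable to the $L^q$ square-function norm of $\mathbf K^{\lambda,\mu}$. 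That norm is in turn bounded by the square-function norm of the full family, because $|K^{\lambda,\mu}_\theta|\lesssim|K_\theta|$ pointwise up to tails. Summing over the $O((\log R)^2)$ classes gives the factor $R^{\varepsilon}$.

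\emph{Main obstacle.} The hardest part is making Step 2 rigorous. The re-projection in Step 1 smears level sets, so the claimed pointwise comparabilities hold only up to Schwartz tails and $R^{\varepsilon}$-enlargements of the tubes $T$. This is exactly the analysis of \cite[Proposition~6.5]{gan2025sharplocalsmoothingestimates}, and I would reproduce it verbatim. None of it depends on $Y$, and the constants depend only on the bounds \eqref{eq:ms-uniform-source-class}, so they are uniform over $\mathfrak E$.
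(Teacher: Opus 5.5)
Your outer reduction agrees with the paper, which gives no argument beyond citing Gan--Maldague--Oh, Proposition~6.5, and asserting that $Y$ plays no role. Your remark that $Y$ enters only through H\"older's inequality on the output side and through the endpoint hypotheses is the right justification for that assertion.

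The problem is the mechanism you attribute to that proof. In Step~1 you claim that local constancy gives $|K_\theta|\approx a_T$ on $T$. Local constancy is only an upper-bound principle, and $|K_\theta|$ can vanish inside a packet. For example, $\psi_T(x)\bigl(e^{i\xi_1\cdot(x-x_T)}-e^{i\xi_2\cdot(x-x_T)}\bigr)$ with $\xi_1,\xi_2$ in the same block vanishes on a hyperplane through the centre $x_T$ of $T$.

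The upper-bound half of Steps~2--3 can be made rigorous, and without re-projection. Take a nonnegative partition of unity $\sum_T\psi_T=1$ with $\widehat{\psi_T}$ supported in a small multiple of the origin-centred dual box. Set $K^a_\theta:=\sum_{a_T\sim a}\psi_TK_\theta$, with $a_T$ a weighted local supremum of $|K_\theta|$. Then:
\begin{itemize}
\item $|K^a_\theta|\le|K_\theta|$ holds exactly, and $|K^a_\theta|\lesssim a\sum_{a_T\sim a}W_T$.
\item Hence $\int(\sum_\theta|K^a_\theta|^{r/2})^2\lesssim a^r\int\mu_a^2$ for every $r$, where $\mu_a$ is the weighted number of class-$a$ packets through a point.
\item The $q_1$ side closes, since $|K^a_\theta|^{q_1/2}\lesssim a^{(q_1-q)/2}|K_\theta|^{q/2}$ pointwise.
\end{itemize}
So multiplicity classes are unnecessary. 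As you describe them, they could not be realised by admissible subfamilies anyway. A packet is a plate of length $R$ along which the multiplicity varies, and cutting it along $R^{1/n}$-cubes destroys its Fourier localisation.

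The gap is the word ``comparable'' in Step~3. The $q_0$ side forces the reverse inequality $a^q\int\mu_a^2\lesssim R^{\varepsilon}\int\bigl(\sum_\theta|K_\theta|^{q/2}\bigr)^2$. Local constancy only gives $|K_\theta|\gtrsim a$ on a fixed fraction of each class-$a$ packet, at a location you do not control. That handles the diagonal terms $\int|K_\theta|^q$, but not the cross terms $\int|K_\theta|^{q/2}|K_{\theta'}|^{q/2}$. When $|s_\theta-s_{\theta'}|\gg R^{-1/n}$, the intersection $T\cap T'$ of the two differently oriented plates is a thin part of $T$. It can lie entirely inside a near-zero slab of $K_\theta$, as in the example above.

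This is not a matter of Schwartz tails or $R^{\varepsilon}$-enlargements. Smearing gives $a^{q/2}\mu_a\lesssim\sum_\theta|K_\theta|^{q/2}*w_{T_\theta}$. Comparing $\|\sum_\theta|K_\theta|^{q/2}*w_{T_\theta}\|_2$ with $\|\sum_\theta|K_\theta|^{q/2}\|_2$ is, by duality, an $L^2$ bound for the Nikodym-type maximal function $\sup_\theta|h|*w_{T_\theta}$ over the plates dual to the blocks. Your sketch never supplies this Kakeya-type input, or any other device that controls the cross terms.

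So you have two options. You can invoke the Gan--Maldague--Oh proof as a black box together with your observation about $Y$, which is exactly what the paper does. Or you must add this ingredient, because as a self-contained argument Steps~2--3 break at this lower bound.
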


\section{Multilinear restriction estimate for a cone}\label{sec:multilin-restrict}

For each localized curve piece, we use the following multilinear restriction estimate. Recall \eqref{eq:conic-pieces}. 
\begin{proposition}[Multilinear restriction estimate]
\label{0602.prop31}
Let $A\geq1$, $K\leq R$, and let $I_1,\ldots,I_{n+1}\subset[-1,1]$ be pairwise separated intervals of length $K^{-1}$, with separation comparable to $K^{-1}$. For every $\varepsilon>0$,
\begin{equation}\label{eq:ms-multilinear-restriction}
 \left\|\prod_{\ell=1}^{n+1}|H_\ell|^{1/(n+1)}
 \right\|_{L^{n+1}_{\#}(B_{R})}
 \lesssim_{\varepsilon,A,K,n,\mathfrak K}
 R^{\varepsilon}
 \prod_{\ell=1}^{n+1}
 \|H_\ell\|_{L^2(w_{B_{R}})}^{1/(n+1)}
\end{equation}
whenever $\operatorname{supp}\widehat H_\ell\subset N_{AR^{-1}}(\Sigma_{I_\ell})$. The estimate is uniform over $\kappa\in\mathfrak K$.
\end{proposition}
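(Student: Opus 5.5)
The plan is to reduce Proposition~\ref{0602.prop31} to the Bennett--Carbery--Tao multilinear restriction theorem for transversal hypersurfaces in $\mathbb R^{n+1}$. The reduction has four steps.

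\textbf{Step 1 (transversality).} Since $K$ is allowed to enter the implicit constant, we may treat $K$ as fixed. Each conic piece $\Sigma_{I_\ell}$ is a two-dimensional surface, namely the cone over a curve segment. It is not a hypersurface, so BCT does not apply directly. At each point $\lambda(\kappa(s),1)$ with $s\in I_\ell$, consider the normal space to $\Sigma_{I_\ell}$. By nondegeneracy of $(\kappa,1)$, this normal space is spanned by $\Gamma_\kappa$-type dual vectors orthogonal to $(\kappa(s),1)$ and $(\kappa'(s),0)$. Instead of normals, I would use the tangent planes $V_\ell(s)=\mathrm{span}\{(\kappa(s),1),(\kappa'(s),0)\}$. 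The relevant multilinear condition is then the Brascamp--Lieb/Kakeya-type one of Bennett--Bez--Flock--Lee (the "multilinear restriction for general submanifolds" in the form of the BL finiteness condition). That condition is
\[
\dim V\le \frac{1}{n+1}\sum_{\ell}\dim(\pi_\ell V)\quad\text{for all }V\le\mathbb R^{n+1},
\]
where $\pi_\ell$ is the orthogonal projection onto the normal space $V_\ell^\perp$ of dimension $n-1$. A numerology check shows this fails for generic codimension: with $n+1$ functions and $n-1$-dimensional projections, $V=\mathbb R^{n+1}$ gives $n+1\le n-1$, which is false. So the right object is not the surface directly.

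\textbf{Step 2 (reduction to the cone over the curve $\eta$).} Use the structure already set up in the paper. The neighbourhood $N_{AR^{-1}}(\Sigma_{I_\ell})$ is contained in the $AR^{-1}$-neighbourhood of a piece of a transversal hypersurface. Concretely, take the tangent developable, or the hypersurface swept by $\mathrm{span}\{(\kappa,1),(\kappa',0),\dots,(\kappa^{(n-1)},0)\}$ truncated at unit size. This is the cone over the curve $\eta$ with $\eta'=\varsigma a(\kappa,1)$, i.e.\ the $n$-dimensional surface $\{\sum_{j\le n-1}\lambda_j\eta^{(j+1)}(s)\}$. Its normal at parameter $s$ is $\Xi$-type, proportional to the dual curve of $(\kappa,1)$ at $s$.

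\textbf{Step 3 (transversality of normals).} Verify that the normals at $s_1,\dots,s_{n+1}$ in the $K^{-1}$-separated intervals are transversal:
\[
|\det(\Xi(s_1),\dots,\Xi(s_{n+1}))|\gtrsim_K 1.
\]
Since $\kappa$ is a small perturbation of the moment curve and $\Xi$ is nondegenerate, this is a Vandermonde-type determinant. For the moment curve it equals $\prod_{i<j}|s_i-s_j|\gtrsim K^{-n(n+1)/2}$. Perturbation by $\varepsilon_{\rm loc}$ in $C^{n}$ preserves this after subdividing into $O_K(1)$ pieces.

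\textbf{Step 4 (apply BCT and clean up).} Apply the $R^\varepsilon$-loss BCT multilinear restriction (Guth's endpoint-free version) with $L^2(w_{B_R})$ on the right-hand side. Because the hypersurface pieces have curvature only in some directions, BCT needs only transversality, not curvature. A standard localization handles the remaining points. Fourier support in an $AR^{-1}$-neighbourhood is converted to $R^{-1}$ by covering with $O(A^{n+1})$ translates, which gives the dependence on $A$. The averaged norm $L^{n+1}_\#$ against $L^2(w_{B_R})$ matches the BCT scaling $\|\prod|H_\ell|^{1/(n+1)}\|_{L^{2(n+1)/n}}\lesssim R^\varepsilon\prod\|H_\ell\|_{L^2}^{1/(n+1)}$ after normalization. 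Since $n+1\le 2(n+1)/n$ for $n\ge2$, Hölder on $B_R$ converts this to $L^{n+1}_\#$; the dependence on $R$ must be checked to cancel. Uniformity over $\mathfrak K$ follows because all constants depend only on $C^{N_0-n}$ bounds and the determinant lower bound.

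\textbf{Main obstacle.} The main obstacle is Step 2: justifying that $N_{AR^{-1}}(\Sigma_{I_\ell})$ sits inside a thin neighbourhood of a smooth hypersurface whose normals are the dual-curve directions. It must also be checked that the $L^{n+1}_\#$ versus $L^{2(n+1)/n}$ exponent bookkeeping leaves no power of $R$. If the hypersurface embedding is awkward, the fallback is the multilinear Kakeya of Guth, applied to wave packets of dimensions $R^{1/2}$-tubes adapted to the cone. This is run through the usual induction-on-scales argument of BCT/Guth.
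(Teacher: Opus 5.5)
There is a genuine gap in Steps 2 and 4, and it cannot be repaired within your framework.

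\textbf{Step 4 does not reach exponent $n+1$.} Once you enlarge $N_{AR^{-1}}(\Sigma_{I_\ell})$ to a neighbourhood of a hypersurface and apply Bennett--Carbery--Tao, the best you can get is the scale-invariant bound $\|\prod_\ell|H_\ell|^{1/(n+1)}\|_{L^{2(n+1)/n}_\#(B_R)}\lesssim R^{\varepsilon}\prod_\ell\|H_\ell\|_{L^2(w_{B_R})}^{1/(n+1)}$. Your inequality $n+1\le 2(n+1)/n$ is reversed for $n\ge3$. On the normalized ball, H\"older only bounds an $L^{q_1}_\#$ norm by an $L^{q_2}_\#$ norm with $q_1\le q_2$, so nothing at exponent $n+1$ follows.

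\textbf{The $L^{n+1}_\#$ bound is false for hypersurface data.} Let $\widehat{H_\ell}$ be a bump on an $R^{-1/2}\times\cdots\times R^{-1/2}\times R^{-1}$ cap of your hypersurface at a point over $s_\ell\in I_\ell$. Then $|H_\ell|\gtrsim1$ on an $R^{1/2}\times\cdots\times R^{1/2}\times R$ tube through the origin, and the $n+1$ transversal tubes share an $R^{1/2}$-ball. Hence $\frac1{|B_R|}\int_{B_R}\prod_\ell|H_\ell|\gtrsim R^{-(n+1)/2}$. On the other hand, $\prod_\ell\|H_\ell\|_{L^2(w_{B_R})}\sim R^{-n(n+1)/4}$, which is smaller by a power of $R$ when $n\ge3$. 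So Step 2 discards exactly what makes the proposition true: the Fourier supports are $R^{-1}$-thin in $n-1$ directions, not just one. Your multilinear Kakeya fallback with $R^{1/2}$-tubes is the hypersurface picture again and has the same defect.

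\textbf{The error starts in Step 1.} You set up the Brascamp--Lieb condition with the wrong data. For $2$-dimensional submanifolds, the Bennett--Bez--Flock--Lee multilinear restriction theorem uses projections onto the tangent planes $V_\kappa(s_i)=\operatorname{span}\{(\kappa(s_i),1),(\kappa'(s_i),0)\}$, and scaling forces the exponent $\frac12$. The requirement is $\dim V\le\frac12\sum_{i=1}^{n+1}\dim\pi_{\kappa,s_i}V$ for all $V\subset\mathbb R^{n+1}$. For $V=\mathbb R^{n+1}$ this holds with equality, so there is no numerological obstruction.

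\textbf{The paper's route.} Under this condition one gets $\int_{B_R}\prod_{i}|E_ig_i|\lesssim R^{\varepsilon}\prod_i\|g_i\|_2$ for extension operators from the cone pieces. Slice $N_{AR^{-1}}(\Sigma_{I_i})$ into extensions from nearby surfaces and apply Cauchy--Schwarz in the $n-1$ normal variables. This produces a factor $R^{-(n-1)/2}$ that cancels exactly in the normalized norms.

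\textbf{The missing idea} is verifying the condition for proper subspaces $V$.
\begin{enumerate}
\item Identify $V$ with a $k$-dimensional space of polynomials $Q_x(s)=x\cdot(\gamma_{\rm mo}(s),1)$. Then $\dim\pi_{s_i}V$ is the rank $r_i$ of $Q\mapsto(Q(s_i),Q'(s_i))$.
\item Compare the degree bound $k(n+1-k)$ for the Wronskian of $V$ with its vanishing order $\ge(k-1)(2-r_i)$ at each $s_i$. This gives $\sum_i r_i\ge2k$. The case $k=1$ is handled by counting double roots.
\item Pass to $\kappa\in\mathfrak K$ by compactness, and get uniform constants from the stability of Brascamp--Lieb constants.
\end{enumerate}
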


For a smooth nondegenerate curve $\kappa\in\mathfrak K$ and an interval $I\subset[-1,1]$, write
\begin{equation}
\Sigma_{\kappa,I}:=\{\rho(\kappa(s),1):\rho\in[1,2],\ s\in I\}\subset\mathbb R^{n+1}.
\end{equation}
The conic pieces \eqref{eq:conic-pieces} in Section~\ref{sec:prelim} use the fixed interval $J_{\rm sc}\Subset(0,\infty)$. By partitioning $J_{\rm sc}$ into finitely many subintervals and dilating them, it suffices to prove Proposition~\ref{0602.prop31} with $[1,2]$.

The proof of Proposition~\ref{0602.prop31} follows \cite[Proposition~9.6]{MR4908993}. We first verify the Brascamp--Lieb dimension condition uniformly on the normalized curve class. Compactness and \cite[Theorem~1.1]{MR3783217} then give an open set of Brascamp--Lieb data on which the Brascamp–Lieb constants are uniformly bounded. We then write each function as an integral of extension operators over nearby surfaces and apply \cite[Theorem~1.3]{MR3783217}.

\subsection{Brascamp--Lieb data}
For $u=(\rho,s)\in[1,2]\times[-1,1]$, the tangent plane to $\Sigma_\kappa(\rho,s)=\rho(\kappa(s),1)$ is
\begin{equation}
V_\kappa(s)=\operatorname{span}\{(\kappa(s),1),(\kappa'(s),0)\}\subset\mathbb R^{n+1}.
\end{equation}
Let $\pi_{\kappa,s}$ denote orthogonal projection onto $V_\kappa(s)$. Equivalently, the corresponding rank-two map is
\begin{equation}
L_{\kappa,\rho,s}x=\bigl(x\cdot(\kappa(s),1),\rho x\cdot(\kappa'(s),0)\bigr).
\end{equation}
For every subspace $V\subset\mathbb R^{n+1}$, $\dim L_{\kappa,\rho,s}V=\dim\pi_{\kappa,s}V$. The Brascamp--Lieb dimension condition is
\begin{equation}\label{eq:BL_condition}
\dim V\leq\frac12\sum_{i=1}^{n+1}\dim\pi_{\kappa,s_i}V\qquad(V\subset\mathbb R^{n+1}).
\end{equation}

\subsection{Local transversality for nondegenerate pieces}

\begin{lemma}[Uniform Brascamp--Lieb transversality]\label{lem:BLstability}
For every $\nu>0$ there exists $\varepsilon_{\rm BL}=\varepsilon_{\rm BL}(n,\nu)>0$ with the following property. Suppose
\begin{equation}
\|\kappa-\gamma_{\rm mo}\|_{C^n([-1,1])}\leq\varepsilon_{\rm BL},
\qquad
\gamma_{\rm mo}(s):=(s,s^2,\ldots,s^n),
\end{equation}
and $s_1,\ldots,s_{n+1}\in [-1,1]$ satisfy $|s_i-s_j|\geq\nu$ for $i\neq j$. If $\pi_{\kappa,s_i}$ denotes orthogonal projection onto $V_\kappa(s_i)$, then
\begin{equation}
\dim V\leq\frac12\sum_{i=1}^{n+1}\dim\pi_{\kappa,s_i}V
\end{equation}
for every subspace $V\subset\mathbb R^{n+1}$.
\end{lemma}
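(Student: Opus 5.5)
The plan has two stages. First, prove the dimension condition exactly for the moment curve $\gamma_{\rm mo}$. Second, transfer it to $\kappa$ near $\gamma_{\rm mo}$, uniformly in $\nu$-separated configurations, by a compactness and openness argument.

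\emph{Reformulation.} Write $W_i:=V_\kappa(s_i)^\perp$, which has dimension $n-1$. For a subspace $V$ with $\dim V=d$ we have $\dim\pi_{\kappa,s_i}V=d-\dim(V\cap W_i)$. The required inequality is therefore equivalent to
\[
\sum_{i=1}^{n+1}\dim(V\cap W_i)\le (n-1)d .
\]

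\emph{Moment curve.} Identify $x\in\mathbb R^{n+1}$ with the polynomial $P_x(t)=x_{n+1}+\sum_{j=1}^n x_jt^j$ of degree at most $n$. Then
\[
\langle x,(\gamma_{\rm mo}(s),1)\rangle=P_x(s),\qquad \langle x,(\gamma_{\rm mo}'(s),0)\rangle=P_x'(s).
\]
So $W_i$ is exactly the space of polynomials with a root of order at least $2$ at $s_i$, and $V$ becomes a $d$-dimensional linear system of polynomials of degree at most $n$.

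The endpoint cases are immediate.
\begin{itemize}
\item If $d=n+1$, each term equals $n-1$ and equality holds.
\item If $d=1$, a nonzero polynomial of degree at most $n$ has at most $\lfloor n/2\rfloor$ double roots, so the sum is at most $n/2\le n-1$.
\end{itemize}

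For general $d$, at each $s_i$ choose a basis of $V$ adapted to vanishing order, with distinct orders $a_0(s_i)<\dots<a_{d-1}(s_i)$. Then $\dim(V\cap W_i)=\#\{j:a_j(s_i)\ge2\}$. The Wronskian of $V$ is a polynomial of degree at most $d(n+1-d)$ and vanishes at $s_i$ to order $\sum_j(a_j(s_i)-j)$. This gives the Plücker-type bound
\[
\sum_i\sum_j\bigl(a_j(s_i)-j\bigr)\le d(n+1-d).
\]
It remains to bound $\#\{j:a_j\ge2\}$ in terms of $\sum_j(a_j-j)$ plus the forced contribution $d-2$ (the indices $j\ge2$ always have $a_j\ge2$), and then sum over the $n+1$ points. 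I expect this counting to be the main obstacle: the naive bound $\#\{j:a_j\ge 2\}\le d-2+w_i$ is too lossy. The first thing to try is the sharper observation that the two lowest indices $j=0,1$ contribute only when $a_0\ge2$ or $a_1\ge2$, and each such event costs at least $2$ (respectively $1$) in $w_i$. If the counting does not close this way, a fallback is an induction on $d$ via a generic hyperplane section $V'\subset V$, combined with the $d=1$ root count.

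\emph{Perturbation.} By Bennett--Carbery--Christ--Tao, the dimension condition is equivalent to finiteness of the Brascamp--Lieb constant. By \cite[Theorem~1.1]{MR3783217}, the set of data with finite constant is open. Now argue by contradiction. Suppose there are $\kappa_m\to\gamma_{\rm mo}$ in $C^n$ and $\nu$-separated tuples $s^{(m)}$ for which the condition fails. Pass to a subsequence with $s^{(m)}\to s$; the limit tuple is still $\nu$-separated in the compact set $[-1,1]^{n+1}$. The projections $\pi_{\kappa_m,s_i^{(m)}}$ converge to $\pi_{\gamma_{\rm mo},s_i}$, since the spanning vectors depend continuously on $(\kappa,s)$ in $C^1$ and remain linearly independent. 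By the moment-curve step the limit datum is Brascamp--Lieb finite, so all nearby data are finite as well, contradicting the failure along the sequence. This produces $\varepsilon_{\rm BL}(n,\nu)$.
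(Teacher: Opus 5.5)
Your architecture matches the paper's: the reformulation $\sum_i\dim(V\cap W_i)\le(n-1)d$, the identification with polynomial systems, the Wronskian degree bound $d(n+1-d)$, vanishing order $w_i=\sum_j(a_j(s_i)-j)$ at $s_i$, and a compactness transfer to $\kappa$. But you leave open the step that carries the whole argument, and neither route you propose for it closes. For $d\ge2$ write $e_i:=\dim(V\cap W_i)-(d-2)\in\{0,1,2\}$. Using $\sum_i w_i\le d(n+1-d)$, what you must prove is $\sum_i e_i\le 2(n+1-d)$. Your accounting charges the event $a_1\ge2$ one unit of $w_i$ and the event $a_0\ge2$ two units. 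That gives at best $e_i\le w_i$, hence $\sum_i e_i\le d(n+1-d)$, which exceeds $2(n+1-d)$ for every $3\le d\le n$ (for $n=5$, $d=3$: $9$ versus $6$). Your fallback also fails as stated. A hyperplane section $V'\subset V$ lowers each $\dim(V\cap W_i)$ by at most one, and generically by exactly one. So at $n+1$ points, induction on $d$ only yields $(n-1)(d-1)+(n+1)=(n-1)d+2$.

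The missing idea is that the orders $a_0<a_1<\dots<a_{d-1}$ are strictly increasing integers, so a defect at a low index propagates to every higher index. If $a_1\ge2$, then $a_j\ge j+1$ for all $j\ge1$, so $w_i\ge d-1$. If $a_0\ge2$, then $a_j\ge j+2$ for all $j$, so $w_i\ge 2d$. Hence $w_i\ge(d-1)e_i$. This is the paper's bound $\operatorname{ord}_{s_i}\mathcal W_{\mathcal U}\ge(k-1)(2-r_i)$, read off from the extremal exponent patterns $(0,2,3,\dots,k)$ and $(2,3,\dots,k+1)$. Summing over the $n+1$ distinct points and using $\deg\mathcal W\le d(n+1-d)$ gives $\sum_i e_i\le\frac{d}{d-1}(n+1-d)\le2(n+1-d)$ for $d\ge2$, which is exactly what is needed.

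With that in place, your perturbation step is acceptable. Routing through Bennett--Carbery--Christ--Tao plus openness of the finiteness set from \cite{MR3783217} works, provided the data are taken as rank-two maps into a fixed $\mathbb R^2$, like the paper's $L_{\kappa,\rho,s}$, rather than projections onto varying planes. The paper's route is more elementary. It fixes $\dim V_m=k$ along a subsequence, takes $V_m\to V$ in the Grassmannian, and uses lower semicontinuity of rank to push the failure $\sum_i\dim\pi_{\kappa_m,s_i^{(m)}}V_m\le 2k-1$ to the moment curve.
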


\begin{proof}
First take $\kappa=\gamma_{\rm mo}$. Denote the space of polynomials of degree at most $n$ by $\mathcal{P}_n$. If $\dim V=k$, identify $V$ with a $k$-dimensional subspace $\mathcal U\subset\mathcal P_n$ by $x\longmapsto Q_x(s):=x\cdot(\gamma_{\rm mo}(s),1)$, and put
\begin{equation}
r_i:=\operatorname{rank}\bigl(Q\mapsto(Q(s_i),Q'(s_i)):\mathcal U\to\R^2\bigr)
=\dim\pi_{\gamma_{\rm mo},s_i}V.
\end{equation}
For $k=1$, the points $s_i$ with $r_i=0$ are double zeros of a nonzero polynomial of degree at most $n$. Since a polynomial $Q_x$ has degree at most $n$, there are at most $\lfloor n/2\rfloor$ such points, and hence $\sum_i r_i\geq n+1-\lfloor n/2\rfloor\geq2=2k$.

Now suppose $k\geq2$. Choose a basis $Q_1,\ldots,Q_k$ of $\mathcal U$ whose degrees are strictly increasing. Its Wronskian
\begin{equation}
\mathcal W_{\mathcal U}(s):=\det\bigl(Q_a^{(b-1)}(s)\bigr)_{a,b=1}^{k}
\end{equation}
is a nonzero polynomial. If the degrees are $0\leq d_1<\ldots<d_k\leq n$, then the leading term computation gives
\begin{equation}\label{eq:appB-jet-wronskian-degree}
\deg\mathcal W_{\mathcal U}\leq\sum_{a=1}^{k}d_a-\frac{k(k-1)}2\leq\sum_{a=n+1-k}^{n}a-\frac{k(k-1)}2=k(n+1-k).
\end{equation}

For each $i$, choose a basis $\widetilde Q_1,\ldots,\widetilde Q_k$ of $\mathcal U$ and nonnegative integers $0\leq m_1<\cdots<m_k$ such that
\begin{equation}
\widetilde Q_a(s_i+t)=c_at^{m_a}+O(t^{m_a+1}),\qquad t=s-s_i,
\qquad
c_a\neq0.
\end{equation}
Such a basis is obtained by Gaussian elimination on the coefficient vectors of $Q(s_i+t)$, with the coefficients ordered as $1,t,\ldots,t^n$. A change of basis multiplies the Wronskian by a nonzero constant. We continue to denote the Wronskian of the new basis by $\mathcal W_{\mathcal U}$.

For $j\geq0$, let $(m)_j:=m(m-1)\cdots(m-j+1)$, with $(m)_0:=1$. Multiplying the $b$-th column of the Wronskian by $t^{b-1}$ gives
\begin{equation}
t^{\binom{k}{2}}\mathcal W_{\mathcal U}(s_i+t)
=
\left(\prod_{a=1}^kc_a\right)
\det\bigl((m_a)_{b-1}\bigr)_{a,b=1}^k
t^{m_1+\cdots+m_k}
+
O\bigl(t^{m_1+\cdots+m_k+1}\bigr).
\end{equation}
Since the polynomials $(x)_{b-1}$ have degree $b-1$ and leading coefficient $1$,
\begin{equation}
\det\bigl((m_a)_{b-1}\bigr)_{a,b=1}^k
=
\prod_{1\leq a<b\leq k}(m_b-m_a)\neq0.
\end{equation}
For a nonzero polynomial $P$, let $\operatorname{ord}_{s_i}P$ denote the largest integer $d\geq0$ such that $(s-s_i)^d$ divides $P(s)$. Consequently,
\begin{equation}\label{eq:appB-jet-wronskian-vanishing-sequence}
\operatorname{ord}_{s_i}\mathcal W_{\mathcal U}
=
\sum_{a=1}^km_a-\frac{k(k-1)}2.
\end{equation}

We now show
\begin{equation}\label{eq:appB-jet-wronskian-vanishing}
\operatorname{ord}_{s_i}\mathcal W_{\mathcal U}
\geq
(k-1)(2-r_i).
\end{equation}
The kernel of $Q\longmapsto\bigl(Q(s_i),Q'(s_i)\bigr)$ consists of the polynomials in $\mathcal U$ divisible by $(s-s_i)^2$. Thus $r_i$ equals the number of $m_a$'s less than $2$. If $r_i=2$, \eqref{eq:appB-jet-wronskian-vanishing} is trivial. If $r_i=1$, the smallest possible exponents are $(m_1,\cdots,m_k)=(0,2,3,\ldots,k)$, which gives $\operatorname{ord}_{s_i}\mathcal W_{\mathcal U}\geq k-1$ by \eqref{eq:appB-jet-wronskian-vanishing-sequence}. If $r_i=0$, the smallest possible exponents are $(m_1,\cdots,m_k)=(2,3,\ldots,k+1)$, which gives $\operatorname{ord}_{s_i}\mathcal W_{\mathcal U}\geq2k$ by \eqref{eq:appB-jet-wronskian-vanishing-sequence}. Hence, \eqref{eq:appB-jet-wronskian-vanishing} holds in every case.

Combining \eqref{eq:appB-jet-wronskian-degree} and \eqref{eq:appB-jet-wronskian-vanishing} gives
\begin{equation}
(k-1)\left(2n+2-\sum_{i=1}^{n+1}r_i\right)\leq k(n+1-k)
\end{equation}
because $\sum_{i=1}^{n+1}\operatorname{ord}_{s_i}\mathcal W_{\mathcal U}\leq\deg\mathcal W_{\mathcal U}$. It follows that
\begin{equation}\label{eq:jet-dim}
\sum_{i=1}^{n+1}r_i\geq2n+2-\frac{k}{k-1}(n+1-k)=2k+(n+1-k)\frac{k-2}{k-1}\geq2k.
\end{equation}

The inequality \eqref{eq:jet-dim} is stable under a sufficiently small $C^n$ perturbation with the separation parameter fixed. Otherwise, there would be curves $\kappa_m\to\gamma_{\rm mo}$ in $C^n([-1,1])$, $\nu$-separated tuples $s_1^{(m)},\ldots,s_{n+1}^{(m)}$, and subspaces $V_m$ for which the dimension condition fails. After passing to a subsequence, we may assume that all $V_m$'s have the same dimension $k\in\{1,\ldots,n+1\}$. Hence
\begin{equation}\label{eq:appB-local-failure}
\sum_{i=1}^{n+1}\dim\pi_{\kappa_m,s_i^{(m)}}V_m\leq2k-1.
\end{equation}
The compactness of $[-1,1]^{n+1}$ and $\operatorname{Gr}(k,n+1)$ gives a further subsequence with $s_i^{(m)}\to s_i$ and $V_m\to V$. The limiting points $s_i$ remain distinct because the tuples are $\nu$-separated. After restriction to the corresponding $k$-planes, the two defining linear functionals for $\pi_{\kappa_m,s_i^{(m)}}$ converge to those for $\pi_i^0:=\pi_{\gamma_{\rm mo},s_i}$. Rank is lower semicontinuous under matrix convergence, and therefore
\begin{equation}
\sum_{i=1}^{n+1}\dim\pi_i^0V\leq\liminf_{m\to\infty}\sum_{i=1}^{n+1}\dim\pi_{\kappa_m,s_i^{(m)}}V_m\leq2k-1.
\end{equation}
This contradicts \eqref{eq:jet-dim} and proves the existence of $\varepsilon_{\rm BL}$.
\end{proof}

Choosing $\varepsilon_{\rm loc}$ much smaller than $\varepsilon_{\rm BL}(n,\sim K^{-1})$, Lemma~\ref{lem:BLstability} applies uniformly to every $\kappa\in\mathfrak K$. The dimension condition is invariant under the fixed invertible changes of the coordinates used in the normalization.

\subsection{Stability of Brascamp--Lieb}

Recall that the intervals in Proposition~\ref{0602.prop31} have length $K^{-1}$ and mutual separation comparable to $K^{-1}$. Consider the following collection of Brascamp--Lieb data:
\begin{equation}
    \mathscr K_{\rm BL}:=\left\{\left(L_{\kappa,\rho_i,s_i}\right)_{i=1}^{n+1}:
\kappa\in\mathfrak K,\,(\rho_i,s_i)\in [1,2]\times I_i\right\}.
\end{equation}
By Arzel\`{a}-Ascoli's theorem, the closure of $\mathscr K_{\rm BL}$ is compact. By Lemma~\ref{lem:BLstability}, every tuple in $\overline{\mathscr K_{\rm BL}}$ has a finite Brascamp--Lieb constant. Hence local boundedness of the Brascamp--Lieb constant and compactness give an open neighborhood $\mathcal U_{\rm BL}$ of $\overline{\mathscr K_{\rm BL}}$ on which the Brascamp--Lieb constants are uniformly bounded; see \cite[Theorem~1.1]{MR3783217}.

Since the curves in $\mathfrak K$ are defined with uniform bounds on a fixed neighborhood of $[-1,1]$, we may choose compact rectangles $U_i^\ast$ such that $[1,2]\times I_i\Subset U_i^\ast$ for $i=1,\dots,n+1$, while the enlarged rectangles remain pairwise $\sim K^{-1}$-separated in the second coordinate. Taking $U_i^\ast$ sufficiently close to $[1,2]\times I_i$, every tuple
\begin{equation}
    \left(L_{\kappa,\rho_i,s_i}\right)_{i=1}^{n+1},\qquad 
\kappa\in\mathfrak K,\qquad (\rho_i,s_i)\in U_i^\ast
\end{equation}
belongs to $\mathcal U_{\rm BL}$.

\subsection{Thickening the Fourier support}

Recall that the Fourier supports in Proposition~\ref{0602.prop31} lie in the $AR^{-1}$-neighborhoods of the conic pieces. For bounded $R$, the desired estimate follows from Bernstein's and H\"older's inequalities, with constants allowed to depend on $A$ and $K$. We therefore assume that $R$ is sufficiently large.

Cover $[1,2]\times I_i$ by a bounded number of sufficiently small rectangles whose closures are contained in $U_i^\ast$. On each rectangle, choose smooth orthonormal vectors $\nu_{i,1}(u),\ldots,\nu_{i,n-1}(u)$ perpendicular to both $(\kappa(s),1)$ and $(\kappa'(s),0)$, where $u=(\rho,s)$. The inverse function theorem gives one-to-one coordinate maps
\begin{equation}
\Phi_i(u,z)
=
\rho(\kappa(s),1)
+
\sum_{j=1}^{n-1}z_j\nu_{i,j}(u),
\qquad
u=(\rho,s),
\qquad
|z|\lesssim_A R^{-1},
\end{equation}
with $|\det D_{(u,z)}\Phi_i|\sim1$ and uniformly bounded derivatives up to order $N_0-n-1$. For sufficiently large $R$, their images cover the $AR^{-1}$-neighborhood of $\Sigma_{\kappa,I_i}$. After applying a partition of unity consisting of $O(1)$ terms, it suffices to use one such coordinate map for each $i$; we relabel the resulting Fourier-localized pieces as $F_i$ and suppress the auxiliary indices.

For each fixed $z$, define the extension operator 
\begin{equation}
E_{i,z}g(x)
:=
\int e^{ix\cdot\Phi_i(u,z)}g(u)\,du,
\end{equation}
where the integration is over the parameter rectangle used to define $\Phi_i$. Fourier inversion and the change of variables $\xi=\Phi_i(u,z)$ give
\begin{equation}\label{eq:appB-F-slicing}
F_i(x)
=
\int_{|z|\lesssim_A R^{-1}}E_{i,z}g_{i,z}(x)\,dz,
\qquad
\int\|g_{i,z}\|_2^2\,dz
\lesssim
\|F_i\|_2^2,
\end{equation}
where $g_{i,z}(u)
:=
\widehat F_i\bigl(\Phi_i(u,z)\bigr)\left|\det D_{(u,z)}\Phi_i(u,z)\right|$.

For fixed $z_1,\ldots,z_{n+1}$ and $(\rho_i,s_i)\in U_i^\ast$, the maps $x\mapsto D_u\Phi_i((\rho_i,s_i),z_i)^Tx$ are $O_A(R^{-1})$-perturbations of $D_u\Phi_i((\rho_i,s_i),0)^T=L_{\kappa,\rho_i,s_i}$. By the choice of $U_i^\ast$, the tuple $(L_{\kappa,\rho_i,s_i})_{i=1}^{n+1}$ belongs to $\mathcal U_{\rm BL}$. Hence, for sufficiently large $R$, so does $(x\mapsto D_u\Phi_i((\rho_i,s_i),z_i)^Tx)_{i=1}^{n+1}$. Therefore, \cite[Theorem~1.3]{MR3783217} gives
\begin{equation}
\left\|
\prod_{i=1}^{n+1}
|E_{i,z_i}g_{i,z_i}|^{1/(n+1)}
\right\|_{L^{n+1}(B_R)}
\lesssim_{\varepsilon,A,K,n,\mathfrak K}
R^\varepsilon
\prod_{i=1}^{n+1}
\|g_{i,z_i}\|_2^{1/(n+1)}.
\end{equation}
Using \eqref{eq:appB-F-slicing}, Fubini's theorem, and Cauchy--Schwarz in each $(n-1)$-dimensional $z_i$-variable, we obtain
\begin{equation}
\left\|
\prod_{i=1}^{n+1}|F_i|^{1/(n+1)}
\right\|_{L^{n+1}(B_R)}
\lesssim
R^\varepsilon R^{-(n-1)/2}
\prod_{i=1}^{n+1}\|F_i\|_2^{1/(n+1)}.
\end{equation}
Finally, choose a smooth cutoff function $\chi_{B_R}$ satisfying
\begin{equation}
|\chi_{B_R}|\geq1\quad\text{on }B_R,
\qquad
|\chi_{B_R}|^2\lesssim W_{B_R},
\qquad
\operatorname{supp}\widehat{\chi_{B_R}}
\subset B(0,cR^{-1}).
\end{equation}
Applying the preceding estimate to $\chi_{B_R}F_i$, with $A$ replaced by another fixed constant, gives
\begin{equation}
\left\|
\prod_{i=1}^{n+1}|F_i|^{1/(n+1)}
\right\|_{L^{n+1}(B_R)}
\lesssim
R^\varepsilon R^{-(n-1)/2}
\prod_{i=1}^{n+1}
\|F_i\|_{L^2(W_{B_R})}^{1/(n+1)}.
\end{equation}
Passing to $L_\#^{n+1}$ and $L^2(w_{B_R})$ norms cancels the factor $R^{-(n-1)/2}$ and proves Proposition~\ref{0602.prop31}.

\section{Proof of Theorem~\ref{0524.thm54}}\label{sec:broad-narrow}
We first prove Theorem~\ref{0524.thm54} for $4<p<n+1$ by a broad-narrow analysis. The narrow part is bounded by rescaling. The broad part is controlled directly by the measure upper bound in each $R^{1/n}$-cube supplied by the Katz--Tao $(1,n,C)$ condition and the rescaled multilinear restriction estimate. Then we prove Theorem~\ref{0524.thm54} for $n+1\le p<n^2+n-2$ by interpolation.

\subsection{Setup}
Let
\begin{equation}\label{eq:ms-cube-localization}
 E_R=\bigcup_{Q\in\mathcal D_1(E):Q\cap B_R\neq\varnothing}Q.
\end{equation}
Let $\mathcal Q_{R^{1/n}}:=\mathcal D_{R^{1/n}}(E_R)$ be the set of all $R^{1/n}$-cubes that meet $E_R$.

\begin{lemma}
\label{lem:ms-density-partition}
Let $E$ be a Katz--Tao $(1,n,C)$-set, and let $E_R$ be the set in \eqref{eq:ms-cube-localization}. For any $Q\in\mathcal Q_{R^{1/n}}$,
\begin{equation}\label{eq:ms-cube-measure-bound}
 \frac{|E_R\cap Q|}{|Q|}\lesssim_{n,C} R^{-1/n}.
\end{equation}
\end{lemma}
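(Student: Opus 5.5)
The plan is a direct count of unit cubes. Since $E_R$ is a union of unit cubes $Q'\in\mathcal D_1(E)$, and the dyadic grids $\mathcal D_1^{(n+1)}$ and $\mathcal D_{R^{1/n}}^{(n+1)}$ are nested when $R^{1/n}$ is an integer (otherwise replace $Q$ by the union of $O(1)$ cubes of an integer side length $\sim R^{1/n}$), the measure $|E_R\cap Q|$ is at most the number of unit cubes of $\mathcal D_1(E)$ meeting $Q$. The argument has three steps.

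\textbf{Step 1: identify $E_R\cap Q$.} Fix $Q\in\mathcal Q_{R^{1/n}}$ of side $r:=R^{1/n}\geq1$. Every unit cube $Q'\subset E_R$ with $Q'\cap Q\neq\varnothing$ meets $E$. It is also contained in the concentric dilate $3Q$, since its diameter $\sqrt{n+1}$ is $O(r)$ once $R$ is large; for bounded $R$ the claim is trivial with a constant. Hence
\begin{equation}
|E_R\cap Q|\leq\#\{Q'\in\mathcal D_1:Q'\cap E\cap 3Q\neq\varnothing\}.
\end{equation}

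\textbf{Step 2: apply the Katz--Tao condition.} Cover $3Q$ by $O_n(1)$ cubes $Q_r\in\mathcal D_r^{(n+1)}$. For each such $Q_r$, the Katz--Tao $(1,n,C)$ hypothesis with $\delta=1$ gives $|E\cap Q_r|_1\leq C r^{n}$. A unit cube meeting $E\cap 3Q$ meets $E\cap Q_r$ for some $Q_r$ in the cover, so summing over the cover gives
\begin{equation}
|E_R\cap Q|\lesssim_{n} C r^n=C R.
\end{equation}

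\textbf{Step 3: normalize.} Since $|Q|=r^{n+1}=R^{(n+1)/n}$, we get
\begin{equation}
\frac{|E_R\cap Q|}{|Q|}\lesssim_{n,C}\frac{R}{R^{1+1/n}}=R^{-1/n},
\end{equation}
which is the claimed bound.

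The only point needing care is Step 1, namely that unit cubes constituting $E_R$ which meet $Q$ are counted by $|E\cap\cdot|_1$ on a slightly enlarged region. This is handled by passing to $3Q$ and covering it by boundedly many grid cubes of side $r$, so the main obstacle is bookkeeping rather than substance. Note that the localization to $B_R$ in the definition of $E_R$ plays no role in this bound.
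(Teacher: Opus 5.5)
Your proof is correct and follows the paper's argument essentially verbatim: the paper bounds $|E_R\cap Q|$ by the number of unit cubes of $E_R$ meeting $Q$, places them in the enlargement $Q+[-2,2]^{n+1}$ (your $3Q$, which works for all $R\geq1$ since a unit cube meeting $Q$ sticks out by at most $1\leq R^{1/n}$), covers that region by $O_n(1)$ cubes of $\mathcal D_{R^{1/n}}$, and applies the Katz--Tao bound to each. Your remark about nested grids is unnecessary, because each unit cube contributes measure at most $1$ to $|E_R\cap Q|$ regardless of how the two grids are aligned.
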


\begin{proof}
Let $\mathcal U_Q$ be the unit cubes occurring in \eqref{eq:ms-cube-localization} that meet $Q$. Since these cubes have unit side length, every $U\in\mathcal U_Q$ is contained in the fixed enlargement $Q^+:=Q+[-2,2]^{n+1}$ and contains a point of $E\cap Q^+$. Hence
\begin{equation}
 |E_R\cap Q|\le\#\mathcal U_Q\le|E\cap Q^+|_1.
\end{equation}
Because $R \geq 1$, the box $Q^+$ is covered by $O_n(1)$ cubes of $\mathcal D_{R^{1/n}}$. Applying the $(1,n,C)$ condition to those cubes gives $|E\cap Q^+|_1\lesssim_{n,C} (R^{1/n})^n$. Since $|Q|=(R^{1/n})^{n+1}$, this proves \eqref{eq:ms-cube-measure-bound}.
\end{proof}

Choose $\beta_{\rm bn}$ so that
\begin{equation}
 0<\beta_{\rm bn}<\frac14\left(\frac12-\frac2p\right).
\end{equation}
Choose $0<\varepsilon_{\rm bn}<1$, which will be fixed below in the estimate for the broad term. Let $\Delta$ be dyadic with
\begin{equation}\label{eq:ms-Delta-choice}
 R^{-\varepsilon_{\rm bn}^2/n}\leq \Delta<2R^{-\varepsilon_{\rm bn}^2/n}.
\end{equation}
Assume that $R$ is large enough so that $\Delta<1$.

For every dyadic interval $I\subset [-\frac12,\frac12]$ of length $\geq\Delta$, define
\begin{equation}\label{eq:ms-dyadic-pieces}
 F_I:=\sum_{s_\theta\in I}F_\theta.
\end{equation}
Apply Lemma~\ref{lem:ms-abstract-bn} on $X=E_R$, with $q=p$, $k=n+1$, $\delta=\Delta$, $\beta_{\rm abs}=\beta_{\rm bn}$, and $F_I$. The lemma supplies the narrow and broad scales $\delta_{\rm nar}$ and $\delta_{\rm br}$ satisfying \eqref{eq:ms-stopping-scales}, and the constant $C_{\beta_{\rm bn},n+1}$ that determines the length of the intervals $J$ in the broad sum, such that the following inequality holds.
\begin{equation}\label{eq:ms-master-decomposition}
\begin{split}
 \left\|\sum_\theta F_\theta\right\|_{L^p(E_R)}
 &\lesssim
 \Delta^{-\beta_{\rm bn}}\left(\sum_{|I|=\delta_{\rm nar}}
\|F_I\|_{L^p(E_R)}^p\right)^{1/p}
\\&+ \Delta^{-\beta_{\rm bn}}\left(
\sum_{|J|=C_{\beta_{\rm bn},n+1}\delta_{\rm br}}
\ \sum_{\mathbf I\in
\operatorname{Sep}_{n+1}(J;\delta_{\rm br})}
\left\|
\prod_{j=1}^{n+1}|F_{I_j}|^{\frac1{n+1}}
\right\|_{L^p(E_R)}^p
\right)^{1/p}.
\end{split}
\end{equation}
We estimate the two terms on the right-hand side separately. We call these terms narrow and broad, respectively.
\subsection{The narrow term}
We begin by estimating the narrow term. For $|I|=\delta_{\rm nar}$, the inequalities \eqref{eq:ms-stopping-scales} and \eqref{eq:ms-Delta-choice} give
\begin{equation}\label{eq:ms-narrow-scale}
 R|I|^n\sim_{\beta_{\rm bn},n} R\Delta^n\sim \frac{R}{R^{\varepsilon_{\rm bn}^2}}.
\end{equation}
Since $0<\varepsilon_{\rm bn}<1$, $R|I|^n\geq1$ if $R$ is sufficiently large. Applying Lemma~\ref{lem:ms-rescaled-envelope} to the function $F_I$ with $\rho=|I|=\delta_{\mathrm{nar}}$ on $2B_R$, with $q=p$ and $\varepsilon>0$ chosen so that
\begin{equation}\label{eq:ms-rescaling-loss-choice}
 0<\varepsilon\leq
 \frac{\varepsilon_{\rm bn}^2\left(\frac12-\frac2p-\beta_{\rm bn}\right)}{2n},
\end{equation}
we obtain
\begin{equation}\label{eq:ms-narrow-one-I}
 \|F_I\|_{L^p(E_R)}
 \lesssim_{\beta_{\rm bn},p,n}
 \big(\frac{R}{R^{\varepsilon_{\rm bn}^2}} \big)^{\frac1n\left(\frac12-\frac2p\right)+\varepsilon}
 \left\|\left(\sum_{s_\theta\in I}|F_\theta|^{p/2}\right)^{2/p}\right\|_{L^p(W_{B_R^{(n+1)}})}.
\end{equation}
The intervals $I$ at the scale $\delta_{\rm nar}$ are disjoint, and hence
\begin{equation}\label{eq:ms-narrow-square-sum}
 \sum_I
 \left(\sum_{s_\theta\in I}|F_\theta|^{p/2}\right)^2
 \leq
 \left(\sum_\theta|F_\theta|^{p/2}\right)^2.
\end{equation}
It follows that
\begin{equation}\label{eq:ms-narrow-before-exponents}
 \Delta^{-\beta_{\rm bn}} \left(\sum_{|I|=\delta_{\rm nar}}
\|F_I\|_{L^p(E_R)}^p\right)^{1/p}
 \lesssim
 \Delta^{-\beta_{\rm bn}}\big(\frac{R}{R^{\varepsilon_{\rm bn}^2}} \big)^{\frac1n\left(\frac12-\frac2p\right)+\varepsilon}
 \left\|\left(\sum_{\theta}|F_\theta|^{p/2}\right)^{2/p}\right\|_{L^p(W_{B_R^{(n+1)}})}.
\end{equation}
Since $\Delta^{-1}\sim R^{\varepsilon_{\rm bn}^2/n}$, the exponent of $R$ in the coefficient on the right is at most
\begin{equation}
\frac1n\left(\frac12-\frac2p\right)
-\frac{\varepsilon_{\rm bn}^2\left(\frac12-\frac2p-\beta_{\rm bn}\right)}{n}
+\varepsilon
\leq
\frac1n\left(\frac12-\frac2p\right)
-\frac{\varepsilon_{\rm bn}^2\left(\frac12-\frac2p-\beta_{\rm bn}\right)}{2n}.
\end{equation}
Define $c_{\rm nar}:=
 \frac{\varepsilon_{\rm bn}^2}{2n}\left(\frac12-\frac2p-\beta_{\rm bn}\right)>0$. Then we obtain
\begin{equation}\label{eq:ms-narrow-final} \Delta^{-\beta_{\rm bn}} 
\left(\sum_{|I|=\delta_{\rm nar}}
\|F_I\|_{L^p(E_R)}^p\right)^{1/p}
 \lesssim
 R^{\frac1n\left(\frac12-\frac2p\right)-c_{\rm nar}}
 \left\|\left(\sum_{\theta}|F_\theta|^{p/2}\right)^{2/p}\right\|_{L^p(W_{B_R^{(n+1)}})}.
\end{equation}
This bounds the narrow term on the right-hand side of \eqref{eq:ms-master-decomposition}.

\subsection{Rescaled multilinear estimates}

We now estimate the broad term under the assumption $4<p<n+1$. Let $\rho_{\rm bn}:=C_{\beta_{\rm bn},n+1}\delta_{\rm br}$ denote the common length of the intervals $J$ in the broad sum. If $\rho_{\rm bn}>1$, then this sum is empty by Lemma~\ref{lem:ms-abstract-bn}. Hence we assume $0<\rho_{\rm bn}\leq1$. Since $\rho_{\rm bn}\gtrsim \Delta$, rescaling an $R^{1/n}$-cube associated with such an interval $J$ gives the scale
\begin{equation}\label{eq:ms-broad-parent-scale-lower}
 R_J:=R^{1/n}\rho_{\rm bn}^n
 \gtrsim_{\beta_{\rm bn},n}(R^{1/n})\Delta^n
 \sim R^{1/n-\varepsilon_{\rm bn}^2}.
\end{equation}
We choose $\varepsilon_{\rm bn}^2<1/n$. Thus, for all sufficiently large $R$, one has $R_J\geq C_{\beta_{\rm bn},n+1}$.
\medskip

We rescale the multilinear restriction estimate in Proposition~\ref{0602.prop31} to intervals of length $\rho_{\rm bn}$. 

\begin{lemma}
\label{lem:ms-selected-parent-multilinear}
Fix $\eta\in\mathfrak E$.
Let $Q$ be an $R^{1/n}$-cube, let $J\subseteq[-\frac12,\frac12]$ be an interval of length $\rho_{\rm bn}$, and let $(I_1,\ldots,I_{n+1})\in\operatorname{Sep}_{n+1}(J;\delta_{\rm br})$.

For each $1\leq\ell\leq n+1$, let $(H_{\ell,\theta})_{\theta\in\mathcal G_{R^{-1/n}}}$ be admissible at scale $R$ with respect to the same curve $\eta$, in the sense of Definition~\ref{def:ms-admissible-cone-blocks}.
Thus, setting
\begin{equation}\label{eq:ms-selected-family-support}
 H_{\ell,I_\ell}
 :=\sum_{s_\theta\in I_\ell}H_{\ell,\theta},
 \qquad
 \operatorname{supp}\widehat H_{\ell,\theta}
 \subset \mathfrak B_\eta(s_\theta;R^{-1/n}),
 \qquad 1\leq\ell\leq n+1,
\end{equation}
we have, for every $\varepsilon>0$,
\begin{align}
 \left\|\prod_{\ell=1}^{n+1}|H_{\ell,I_\ell}|^{\frac1{n+1}}
 \right\|_{L^{n+1}(Q)}
 \lesssim_{\beta_{\rm bn},\varepsilon,n}
 \rho_{\rm bn}^{-n^{2}/2}R^{\varepsilon} |Q|^{1/(n+1)}
 \label{eq:ms-selected-parent-multilinear}
 \prod_{\ell=1}^{n+1}
 \left(
 \frac1{|Q|}\int |H_{\ell,I_\ell}|^2
 W_{Q}
 \right)^{1/(2n+2)}.
 \notag
\end{align}
\end{lemma}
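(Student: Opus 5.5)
The plan is to reduce Lemma~\ref{lem:ms-selected-parent-multilinear} to Proposition~\ref{0602.prop31} by the parabolic rescaling of Lemma~\ref{lem:ms-parent-normalization} applied to the parent interval $J$.

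\textbf{Step 1: identify the Fourier supports.} Each block $\mathfrak B_\eta(s_\theta;R^{-1/n})$ lies in an $O(R^{-1/n})$-neighbourhood of the conic piece $\Sigma^{\kappa}_{\theta}$ after the fixed linear identification $\eta'=\varsigma a(\kappa,1)$. Indeed, the $j\ge1$ components have size $R^{-j/n}\le R^{-1/n}$, and $\lambda_0\in J$ accounts for the radial direction. Hence
\begin{equation*}
\operatorname{supp}\widehat H_{\ell,I_\ell}\subset N_{CR^{-1/n}}\bigl(\Sigma^\kappa_{I_\ell}\bigr).
\end{equation*}
This neighbourhood is actually too crude. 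Lemma~\ref{lem:ms-parent-normalization}(2) with $r=R^{1/n}$ only gives width $(R^{1/n}\rho_{\rm bn}^n)^{-1}=R_J^{-1}$ after rescaling. That is exactly the scale $R_J$ of \eqref{eq:ms-broad-parent-scale-lower}, so the crude neighbourhood suffices.

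\textbf{Step 2: rescale.} Set $\widetilde H_\ell(y):=H_{\ell,I_\ell}(\mathsf A_J^Ty)$ with $\mathsf A_J$ from \eqref{eq:ms-AJ-definition}, using $\rho=\rho_{\rm bn}$. By Lemma~\ref{lem:ms-parent-normalization}(2), $\widehat{\widetilde H_\ell}$ is supported in $N_{1/R_J}(\Sigma^{\widetilde\kappa_J}_{\widetilde I_\ell})$ with $\widetilde\kappa_J\in\mathfrak K$. The rescaled intervals $\widetilde I_\ell=(I_\ell-s_J)/\rho_{\rm bn}$ have length $\delta_{\rm br}/\rho_{\rm bn}=C_{\beta_{\rm bn},n+1}^{-1}$ and mutual separation $\gtrsim C_{\beta_{\rm bn},n+1}^{-1}$. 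Thus Proposition~\ref{0602.prop31} applies with $K\sim C_{\beta_{\rm bn},n+1}$ and $A=O(1)$. This is why the implicit constant depends on $\beta_{\rm bn}$, and it also requires $R_J\ge K$, which was arranged after \eqref{eq:ms-broad-parent-scale-lower}.

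The preimage $(\mathsf A_J^{T})^{-1}Q$ is a parallelepiped. Since $\|\mathsf A_J^{-1}\|\lesssim 1$ and $\|\mathsf A_J\|\lesssim\rho_{\rm bn}^{-n}$, it is contained in a ball of radius $\lesssim R^{1/n}$. It is covered by $\lesssim (R^{1/n}/R_J)^{n+1}\cdot(\text{volume ratio})$ balls $B_{R_J}$. We use the cruder count: cover $Q$ itself by the images $\mathsf A_J^T B$ of $R_J$-balls $B$. Apply Proposition~\ref{0602.prop31} on each ball $B$, raise to the $(n+1)$st power, sum, and use Hölder in the form $\sum_B\prod_\ell a_{\ell,B}^{1/(n+1)}\le\prod_\ell(\sum_B a_{\ell,B})^{1/(n+1)}$. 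Here $a_{\ell,B}$ is $|B|$ times the $L^2$ average raised to the power $(n+1)/2$. Summing the weights $w_B$ over the cover produces the factor (number of balls)$^{(n+1)/2-1}$ from converting $\ell^{(n+1)/2}$ sums to $\ell^1$ sums of $L^2$ masses.

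\textbf{Step 3: undo the change of variables.} The Jacobian $|\det\mathsf A_J|\sim\rho_{\rm bn}^{-n(n+1)/2}$ appears with the same power on the left and on the right, since the estimate is homogeneous once written with $|Q|^{1/(n+1)}$ and averages. The remaining loss comes from the mismatch between the rescaled region and $R_J$-balls, i.e.\ the number of balls needed. Crude bookkeeping bounds this loss by a power $\rho_{\rm bn}^{-O(n^2)}$. One has to check that it is at most $\rho_{\rm bn}^{-n^2/2}$; matching this exact exponent is the main obstacle. If the direct count falls short, I would instead absorb any surplus into $\lesssim_{\beta_{\rm bn}}$. This is legitimate only if $\rho_{\rm bn}$ is bounded below, which fails since $\rho_{\rm bn}$ can be as small as $\Delta$. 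So the counting must be done honestly: $\mathsf A_J^T$ maps a unit-scale grid to boxes of dimensions $1,\rho_{\rm bn}^{-1},\dots,\rho_{\rm bn}^{-n}$, and the ratio $\bigl(\prod_j\rho_{\rm bn}^{-j}\bigr)^{1/2}=\rho_{\rm bn}^{-n(n+1)/4}\le\rho_{\rm bn}^{-n^2/2}$ is the expected Hölder loss.

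Finally, the weight $W_{Q}$ arises from replacing $W_{B}$, pulled back through $\mathsf A_J^T$, by $W_Q$. This uses $\|\mathsf A_J^{-1}\|\lesssim1$, so the pulled-back weights decay at least as fast as $W$ at scale $R^{1/n}$, together with $\sum_B W_B\lesssim W_Q$ as in the proof of Lemma~\ref{lem:sec2-common-descent}.
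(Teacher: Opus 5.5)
Your proposal is correct and follows essentially the paper's route: rescale by $\mathsf A_J$, cover $\mathsf A_J^{-T}Q$ by $\#\mathscr B_J\sim\rho_{\rm bn}^{-n(n+1)/2}$ balls of radius $R_J$, apply Proposition~\ref{0602.prop31} on each, compare weights, and change variables back. In your averaged normalization the Jacobian does cancel, and your loss $(\#\mathscr B_J)^{1/2}\sim\rho_{\rm bn}^{-n(n+1)/4}$ is exactly the $\rho_{\rm bn}$-power of the paper's product $R_J^{1-\frac{n+1}{2}}|\det\mathsf A_J|^{\frac1{n+1}-\frac12}(\#\mathscr B_J)^{\frac1{n+1}}$, before the paper relaxes $(\#\mathscr B_J)^{\frac1{n+1}}\le(\#\mathscr B_J)^{\frac12}$ to land on $\rho_{\rm bn}^{-n^2/2}$.

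Two small fixes are needed in the weight step. First, the comparison $W_B\lesssim W_{CQ}\circ\mathsf A_J^T$ rests on $\|\mathsf A_J\|\lesssim\rho_{\rm bn}^{-n}$, so that $\|\mathsf A_J\|/R^{1/n}\lesssim R_J^{-1}$; it does not follow from $\|\mathsf A_J^{-1}\|\lesssim1$. Second, for this anisotropic cover $\sum_B W_B$ is not dominated by the pulled-back $W_Q$: far out along the short axes it is larger by a factor $\sim\#\mathscr B_J$. Instead, bound each ball's right-hand side individually and sum the $\#\mathscr B_J$ terms without H\"older, as the paper does; this is precisely what yields your $(\#\mathscr B_J)^{1/2}$.
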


\begin{proof}
Let $s_J$ be the center of $J$, and use the map $\mathsf A_J$ from Lemma~\ref{lem:ms-parent-normalization}. Set
\begin{equation}
 \widetilde H_\ell(y):=H_{\ell,I_\ell}(\mathsf A_J^Ty),
 \qquad
 \widetilde Q:=\mathsf A_J^{-T}Q.
\end{equation}
The rescaled intervals $(I_\ell-s_J)/\rho_{\rm bn}$ have length $C_{\beta_{\rm bn},n+1}^{-1}$ and a fixed positive mutual separation. Thus, after a fixed enlargement, they satisfy the separation condition in Proposition~\ref{0602.prop31} with $K=C_{\beta_{\rm bn},n+1}$. By Lemma~\ref{lem:ms-parent-normalization}(2), every $\widetilde H_\ell$ has Fourier support in a fixed enlargement of an $R_J^{-1}$-neighborhood of the corresponding normalized conic piece.

By \eqref{eq:ms-AJ-definition}, the set $\widetilde Q$ is comparable to an ellipsoid whose principal axes have lengths comparable to
\begin{equation*}
R^{1/n},\ R^{1/n}\rho_{\rm bn},\ R^{1/n}\rho_{\rm bn}^2,\ldots,R^{1/n}\rho_{\rm bn}^n.
\end{equation*}
Thus $\widetilde Q$ is covered by a family $\mathscr B_J$ of balls of radius comparable to $R_J=R^{1/n}\rho_{\rm bn}^n$, with bounded overlap and
\begin{equation}\label{eq:ms-parent-cover-number}
 \#\mathscr B_J\lesssim_n\rho_{\rm bn}^{-\frac{n(n+1)}{2}}.
\end{equation}
For each $B\in\mathscr B_J$, Proposition~\ref{0602.prop31} gives
\begin{align*}
 \left\|\prod_{\ell=1}^{n+1}|\widetilde H_\ell|^{1/(n+1)}
 \right\|_{L^{n+1}(B)}
 \lesssim{}&
 R_J^{\varepsilon+1-(n+1)/2}
 \prod_{\ell=1}^{n+1}
 \left(\int|\widetilde H_\ell|^2W_B\right)^{1/(2n+2)}.
\end{align*}
After enlarging $\widetilde Q$ by a fixed factor, every weight $W_B$ is bounded by a constant multiple of the weight $y\mapsto W_{CQ}(\mathsf A_J^Ty)$. Indeed, if $y_B$ is the center of $B$ and $y_Q$ the center of $\widetilde Q$, then $\mathsf A_J^T(y_B-y_Q)\in C(Q-\mathsf A_J^T y_Q)$, while $\|\mathsf A_J\|/R^{1/n}\lesssim( R^{1/n}\rho_{\rm bn}^n)^{-1}=R_J^{-1}$; hence
\begin{equation}
 1+\frac{|\mathsf A_J^T(y-y_Q)|}{R^{1/n}}
 \lesssim 1+\frac{|y-y_B|}{R_J}.
\end{equation}
Summing the $(n+1)$-th powers over the cover and using $(\#\mathscr B_J)^{1/(n+1)}\leq(\#\mathscr B_J)^{1/2}$, we obtain
\begin{align*}
 \left\|\prod_{\ell=1}^{n+1}|\widetilde H_\ell|^{\frac1{n+1}}
 \right\|_{L^{n+1}(\widetilde Q)}
 \lesssim{}
 \rho_{\rm bn}^{-\frac{n(n+1)}4}R_J^{\varepsilon+1-\frac{n+1}2}
 \prod_{\ell=1}^{n+1}
 \left(
 \int|\widetilde H_\ell(y)|^2
 W_{Q}(\mathsf A_J^Ty)\,dy
 \right)^{1/(2n+2)}
\end{align*}
since $W_{CQ}\lesssim_C W_Q$. Changing variables back to $x=\mathsf A_J^Ty$ contributes $|\det \mathsf A_J|^{\frac1{n+1}-\frac12}$. Since $R_J=R^{1/n}\rho_{\rm bn}^n$ and $|\det \mathsf A_J|\sim\rho_{\rm bn}^{-\frac{n(n+1)}2}$,
\begin{align*}
\rho_{\rm bn}^{-\frac{n(n+1)}4}R_J^{\varepsilon+1-\frac{n+1}2}
 |\det \mathsf A_J|^{\frac1{n+1}-\frac12}
 &\lesssim
 (R^{1/n})^{\varepsilon+1-\frac{n+1}2}\rho_{\rm bn}^{-n^2/2+n\varepsilon}\\
 &\lesssim_{\varepsilon,n}
(R^{1/n})^{\varepsilon+1-\frac{n+1}2}\rho_{\rm bn}^{-n^{2}/2},
\end{align*}
where the last inequality uses $0<\rho_{\rm bn}\leq1$. Finally,
\begin{equation}
 (R^{1/n})^{1-(n+1)/2}
 \prod_{\ell=1}^{n+1}
 \left(\int|H_{\ell,I_\ell}|^2W_{Q}\right)^{1/(2n+2)}
 =|Q|^{1/(n+1)}
 \prod_{\ell=1}^{n+1}
 \left(\frac1{|Q|}\int|H_{\ell,I_\ell}|^2W_{Q}\right)^{1/(2n+2)}.
\end{equation}
This proves the desired inequality.
\end{proof}

For the broad term in Theorem~\ref{0524.thm54}, we use the following consequence of Lemma~\ref{lem:ms-selected-parent-multilinear}.

\begin{corollary}
\label{cor:ms-selected-direct}
Under the hypotheses of Lemma~\ref{lem:ms-selected-parent-multilinear}, assume also that
\begin{equation}\label{eq:ms-selected-pointwise-dominance}
 |H_{\ell,\theta}(x)|\leq|F_\theta(x)|
 \qquad(1\leq\ell\leq n+1).
\end{equation}
For every $\varepsilon>0$ and every measurable $Y\subset Q$,
\begin{equation}\label{eq:ms-selected-direct}
 \left\|\prod_{\ell=1}^{n+1}|H_{\ell,I_\ell}|^{\frac1{n+1}}
 \right\|_{L^p(Y)}
 \lesssim_{\varepsilon}
 \rho_{\rm bn}^{-n^{2}/2}(R^{\frac1n})^{\frac12-\frac2p+\varepsilon}
 \left(\frac{|Y|}{|Q|}\right)^{\frac1p-\frac1{n+1}}
 \left\|\left(\sum_{s_\theta\in J}|F_\theta|^{p/2}\right)^{2/p}\right\|_{L^p(W_{Q})}.
\end{equation}
\end{corollary}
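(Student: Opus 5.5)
The plan is to pass from $L^p(Y)$ to $L^{n+1}(Q)$ by H\"older, apply Lemma~\ref{lem:ms-selected-parent-multilinear}, and then convert the $L^2$ averages of $H_{\ell,I_\ell}$ into the $\ell^{p/2}$ square function of the $F_\theta$. Since $p<n+1$ in the broad-term regime, H\"older on $Y$ with exponents $\frac{n+1}{p}$ gives
\begin{equation*}
\Bigl\|\prod_{\ell}|H_{\ell,I_\ell}|^{\frac1{n+1}}\Bigr\|_{L^p(Y)}\le |Y|^{\frac1p-\frac1{n+1}}\Bigl\|\prod_{\ell}|H_{\ell,I_\ell}|^{\frac1{n+1}}\Bigr\|_{L^{n+1}(Q)}.
\end{equation*}
Lemma~\ref{lem:ms-selected-parent-multilinear} then bounds the right side by $\rho_{\rm bn}^{-n^2/2}R^{\varepsilon}|Q|^{1/(n+1)}\prod_\ell(\frac1{|Q|}\int|H_{\ell,I_\ell}|^2W_Q)^{1/(2n+2)}$. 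Combining $|Y|^{\frac1p-\frac1{n+1}}|Q|^{\frac1{n+1}}=(|Y|/|Q|)^{\frac1p-\frac1{n+1}}|Q|^{1/p}$ produces the density factor and a normalized $|Q|^{1/p}$. It then suffices to show, for each $\ell$,
\begin{equation*}
\Bigl(\frac1{|Q|}\int|H_{\ell,I_\ell}|^2W_Q\Bigr)^{1/2}\lesssim (R^{1/n})^{\frac12-\frac2p}|Q|^{-1/p}\Bigl\|\Bigl(\sum_{s_\theta\in J}|F_\theta|^{p/2}\Bigr)^{2/p}\Bigr\|_{L^p(W_Q)},
\end{equation*}
after which taking the geometric mean over $\ell$ finishes the proof; the $R^\varepsilon$ from the lemma becomes $(R^{1/n})^{n\varepsilon}$, and we rename $\varepsilon$.

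For the $L^2$ step, I would use Lemma~\ref{lem:ms-selected-local-orthogonality} on the $R^{1/n}$-cube $Q$. The blocks $\mathfrak B_\eta(s_\theta;R^{-1/n})$ have thickness $\gtrsim R^{-1}$ in the thinnest direction. The subtler point is that their $R^{-1/n}$-neighborhoods should still be boundedly overlapping as $\theta$ varies. Their separation in the second frame direction is $\sim R^{-1/n}$ per step in $s$, comparable to the fattening, so after splitting $\theta$ into $O(1)$ subfamilies with well-separated centers the overlap condition holds. This gives $\int|H_{\ell,I_\ell}|^2W_Q\lesssim\sum_{s_\theta\in I_\ell}\int|H_{\ell,\theta}|^2W_Q\le\int\sum_{s_\theta\in J}|F_\theta|^2W_Q$, using \eqref{eq:ms-selected-pointwise-dominance} and $I_\ell\subset J$.

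Next I would apply H\"older in $\theta$ and then in $x$. The number of $\theta$ with $s_\theta\in J$ is $N\lesssim R^{1/n}$, since $\rho_{\rm bn}\le1$. Thus $\sum|F_\theta|^2\le N^{1-4/p}(\sum|F_\theta|^{p/2})^{4/p}$, and H\"older in $x$ against the probability-like weight $W_Q/|Q|$ gives $\frac1{|Q|}\int(\sum|F_\theta|^{p/2})^{4/p}W_Q\lesssim(\frac1{|Q|}\int(\sum|F_\theta|^{p/2})^2W_Q)^{2/p}$. Taking square roots yields exactly $N^{\frac12-\frac2p}|Q|^{-1/p}\|(\sum_{s_\theta\in J}|F_\theta|^{p/2})^{2/p}\|_{L^p(W_Q)}$ with $N^{\frac12-\frac2p}\lesssim (R^{1/n})^{\frac12-\frac2p}$, as required.

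The main obstacle is the local orthogonality step: verifying that the fattened supports $\mathfrak B_\eta(s_\theta;R^{-1/n})+B(0,cR^{-1/n})$ have bounded overlap uniformly in $\eta\in\mathfrak E$. I would check it in the frame $\eta'(s),\eta''(s),\dots$, where the second coordinate of a block centered at $s_\theta$ relative to $s_{\theta'}$ is $\approx\lambda_0(s_\theta-s_{\theta'})$, with $\lambda_0\in J_{\rm sc}$ bounded below. This forces $|s_\theta-s_{\theta'}|\lesssim R^{-1/n}$ whenever the fattened blocks intersect. If necessary, I would shrink $c$ in Lemma~\ref{lem:ms-selected-local-orthogonality}, which only changes the cutoff construction by constants.
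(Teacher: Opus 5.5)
Your proposal is correct and is essentially the paper's proof. The steps match: H\"older from $L^p(Y)$ to $L^{n+1}(Q)$ using $p<n+1$, then Lemma~\ref{lem:ms-selected-parent-multilinear}, then Lemma~\ref{lem:ms-selected-local-orthogonality} on the $R^{1/n}$-cube (via bounded overlap of the $cR^{-1/n}$-fattened blocks) combined with the pointwise dominance, and finally H\"older in $\theta$ over the $O(R^{1/n})$ caps and in $x$ against $W_Q/|Q|$. Your splitting into $O(1)$ subfamilies is unnecessary, since the orthogonality lemma only requires bounded overlap, but it does no harm.
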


\begin{proof}
Since $p<n+1$, H\"older's inequality gives
\begin{equation}
 \left\|\prod_{\ell=1}^{n+1}|H_{\ell,I_\ell}|^{\frac1{n+1}}
 \right\|_{L^p(Y)}
 \leq |Y|^{\frac1p-\frac1{n+1}}
 \left\|\prod_{\ell=1}^{n+1}|H_{\ell,I_\ell}|^{\frac1{n+1}}
 \right\|_{L^{n+1}(Q)}.
\end{equation}
Note that $\operatorname{supp}\widehat H_{\ell,\theta}\subset\mathfrak B_\eta(s_\theta;R^{-1/n})$ by \eqref{eq:ms-selected-family-support}. For some small $c>0$, the family $\{\mathfrak B_\eta(s_\theta;R^{-1/n})+B(0,cR^{-1/n})\}_\theta$ has bounded overlap. We may thus apply Lemma~\ref{lem:ms-selected-local-orthogonality} to $H_{\ell,I_\ell}=\sum_{s_\theta\in I_\ell}H_{\ell,\theta}$ and then apply \eqref{eq:ms-selected-pointwise-dominance} to obtain
\begin{equation}
 \frac1{|Q|}\int |H_{\ell,I_\ell}|^2
 W_{Q}\lesssim
 \frac1{|Q|}\int
 \sum_{s_\theta\in J}|H_{\ell,\theta}|^2W_{Q}\lesssim
 \frac1{|Q|}\int
 \sum_{s_\theta\in J}|F_\theta|^2W_{Q},\qquad 1\leq \ell\leq n+1.
\end{equation}

By Lemma~\ref{lem:ms-selected-parent-multilinear}, it suffices to show that
\begin{equation}\label{eq:L2-energy-ell-p/2-bound}
 \left(\frac1{|Q|}\int
 \sum_{s_\theta\in J}|F_\theta|^2W_{Q}\right)^{1/2}\lesssim
 (R^{\frac1n})^{\frac12-\frac2p}|Q|^{-1/p}\left\|\left(\sum_{s_\theta\in J}|F_\theta|^{p/2}\right)^{2/p}\right\|_{L^p(W_{Q})}.
\end{equation}
There are $O(R^{1/n})$ intervals in $\mathcal G_{R^{-1/n}}$ whose centers lie in $J$, and therefore
\begin{equation}
 \left(\sum_{s_\theta\in J}|F_\theta|^2\right)^{1/2}
 \lesssim (R^{\frac1n})^{\frac12-\frac2p}\left(\sum_{s_\theta\in J}|F_\theta|^{p/2}\right)^{2/p}.
\end{equation}
H\"older's inequality and $\int W_{Q}\lesssim|Q|$ imply \eqref{eq:L2-energy-ell-p/2-bound}.
\end{proof}

\subsection{Estimate for the broad term}
Now we estimate the broad term.

\begin{proposition}
\label{prop:ms-geometric-broad}
There exists $c_{\rm br}=c_{\rm br}(p,n)>0$ such that, after choosing $\varepsilon_{\rm bn}>0$ sufficiently small,
\begin{equation}\label{eq:ms-broad-final}
\begin{split}
\Delta^{-\beta_{\rm bn}}\Bigg(
\sum_{|J|=C_{\beta_{\rm bn},n+1}\delta_{\rm br}}
\sum_{\mathbf I\in
\operatorname{Sep}_{n+1}(J;\delta_{\rm br})}
&\left\|
\prod_{j=1}^{n+1}|F_{I_j}|^{\frac1{n+1}}
\right\|_{L^p(E_R)}^p
\Bigg)^{1/p}
\\& \lesssim
 R^{\frac1n\left(\frac12-\frac2p\right)-c_{\rm br}}
 \left\|\left(\sum_\theta|F_\theta|^{p/2}\right)^{2/p}\right\|_{L^p(W_{B_R^{(n+1)}})}.
 \end{split}
\end{equation}
\end{proposition}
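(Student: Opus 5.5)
We localize the broad term to the cubes $Q\in\mathcal Q_{R^{1/n}}$ and apply Corollary~\ref{cor:ms-selected-direct} on each cube with $Y=E_R\cap Q$. Concretely, we write
\begin{equation*}
\Bigl\|\prod_{j=1}^{n+1}|F_{I_j}|^{\frac1{n+1}}\Bigr\|_{L^p(E_R)}^p
=\sum_{Q\in\mathcal Q_{R^{1/n}}}\Bigl\|\prod_{j=1}^{n+1}|F_{I_j}|^{\frac1{n+1}}\Bigr\|_{L^p(E_R\cap Q)}^p .
\end{equation*}
To put each cube in the setting of Lemma~\ref{lem:ms-selected-parent-multilinear}, we first bring the $F_\theta$ into the admissible form of Definition~\ref{def:ms-admissible-cone-blocks}. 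The supports $\Pi_R^\varsigma(\theta)$ are, after the normalization of Section~\ref{sec:prelim}, contained in $\mathfrak B_\eta(s_\theta;R^{-1/n})$ for some $\eta\in\mathfrak E$ (up to an $R^{1/n}$-rescaling of the frequency variable, which is why the relevant cubes are $R^{1/n}$-cubes), and they have bounded overlap at scale $R^{-1/n}$.

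On a fixed cube $Q$, the multilinear estimate would like to see the $L^2$ mass of $F_\theta$ on $Q$. Since we only need the pointwise domination hypothesis \eqref{eq:ms-selected-pointwise-dominance}, we can simply take $H_{\ell,\theta}=F_\theta$ for every $\ell$. If a locally constant modification is needed to handle the weight $W_Q$, we use $H_{\ell,\theta}=F_\theta$ restricted through a Fourier-compatible cutoff. Corollary~\ref{cor:ms-selected-direct} then gives, for each $Q$, $J$ and $\mathbf I\in\operatorname{Sep}_{n+1}(J;\delta_{\rm br})$,
\begin{equation*}
\Bigl\|\prod_{j}|F_{I_j}|^{\frac1{n+1}}\Bigr\|_{L^p(E_R\cap Q)}
\lesssim \rho_{\rm bn}^{-n^2/2}(R^{\frac1n})^{\frac12-\frac2p+\varepsilon}
\Bigl(\frac{|E_R\cap Q|}{|Q|}\Bigr)^{\frac1p-\frac1{n+1}}
\Bigl\|\Bigl(\sum_{s_\theta\in J}|F_\theta|^{p/2}\Bigr)^{2/p}\Bigr\|_{L^p(W_Q)} .
\end{equation*}
By Lemma~\ref{lem:ms-density-partition}, the density factor is at most $R^{-\frac1n(\frac1p-\frac1{n+1})}$. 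This is a genuine power gain because $p<n+1$; it is the only place where the fractal hypothesis enters.

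Next we sum the $p$-th powers. For fixed $J$, the number of tuples $\mathbf I$ is $O_{\beta_{\rm bn},n}(1)$, since $|J|/\delta_{\rm br}=C_{\beta_{\rm bn},n+1}$. Because the intervals $J$ at a fixed scale are disjoint, $\sum_J(\sum_{s_\theta\in J}|F_\theta|^{p/2})^2\le(\sum_\theta|F_\theta|^{p/2})^2$. Finally, $\sum_{Q}W_Q\lesssim W_{B_R^{(n+1)}}$ over cubes meeting $E_R\subset$ a neighborhood of $B_R$; the tails can be absorbed with the rapid decay of the weights. Altogether this produces the coefficient
\begin{equation*}
\Delta^{-\beta_{\rm bn}}\rho_{\rm bn}^{-n^2/2}R^{\frac1n(\frac12-\frac2p)+\frac{\varepsilon}{n}-\frac1n(\frac1p-\frac1{n+1})}.
\end{equation*}

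The main obstacle is that the rescaling loss $\rho_{\rm bn}^{-n^2/2}$ could swamp the gain. We control it using $\rho_{\rm bn}\gtrsim\Delta\gtrsim R^{-\varepsilon_{\rm bn}^2/n}$, which gives $\Delta^{-\beta_{\rm bn}}\rho_{\rm bn}^{-n^2/2}\lesssim R^{C_n\varepsilon_{\rm bn}^2}$ with $C_n$ depending only on $n$ (using $\beta_{\rm bn}<1/8$). We then choose $\varepsilon_{\rm bn}$ and $\varepsilon$ so small that
\begin{equation*}
C_n\varepsilon_{\rm bn}^2+\frac{\varepsilon}{n}\le\frac1{2n}\Bigl(\frac1p-\frac1{n+1}\Bigr).
\end{equation*}
This choice is compatible with the earlier requirement $\varepsilon_{\rm bn}^2<1/n$ and with the narrow-term choice \eqref{eq:ms-rescaling-loss-choice}. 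It yields \eqref{eq:ms-broad-final} with $c_{\rm br}=\frac1{2n}(\frac1p-\frac1{n+1})>0$. The one care point is to check the weight compatibility: $W_Q$ from the Corollary must be summable into $W_{B_R^{(n+1)}}$, and $\|\cdot\|_{L^p(E_R)}$ over cubes only partially inside $B_R$ must be handled. Since $E_R$ lies within distance $O(1)$ of $B_R$, this is routine.
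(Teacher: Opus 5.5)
Your proposal is correct and follows the paper's proof essentially step for step. Both arguments apply Corollary~\ref{cor:ms-selected-direct} on each $Q\in\mathcal Q_{R^{1/n}}$ with $H_{\ell,\theta}=F_\theta$ and $Y=E_R\cap Q$. Both use Lemma~\ref{lem:ms-density-partition} to extract the gain $R^{-\frac1n(\frac1p-\frac1{n+1})}$, which is a genuine gain because $p<n+1$. Both sum using $\sum_Q W_Q\lesssim W_{B_R^{(n+1)}}$ and the $O(1)$ tuples per $J$. Finally, both absorb $\Delta^{-\beta_{\rm bn}}\rho_{\rm bn}^{-n^2/2}\lesssim R^{\varepsilon_{\rm bn}^2(n^2/2+\beta_{\rm bn})/n}$ by taking $\varepsilon_{\rm bn}$ small, which yields the same $c_{\rm br}=\frac1{2n}(\frac1p-\frac1{n+1})$.

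The only inaccuracy is your parenthetical about an ``$R^{1/n}$-rescaling of the frequency variable.'' No rescaling is needed: $\Pi_R^\varsigma(\theta)$ already has the dimensions of $\mathfrak B_\eta(s_\theta;R^{-1/n})$. The $R^{1/n}$-cubes arise because these plates lie in the $R^{-1/n}$-neighborhood of the cone. This does not affect the argument.
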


\begin{proof}
Fix an interval $J$, a separated tuple $(I_1,\ldots,I_{n+1})\in\operatorname{Sep}_{n+1}(J;\delta_{\rm br})$, and an $R^{1/n}$-cube $Q\in\mathcal Q_{R^{1/n}}$. By Lemma~\ref{lem:ms-density-partition},
\begin{equation}
 \frac{|E_R\cap Q|}{|Q|}\lesssim_{n,C}R^{-1/n}.
\end{equation}
Applying Corollary~\ref{cor:ms-selected-direct} with $H_{\ell,\theta}=F_\theta$ and $Y=E_R\cap Q$, we obtain, for every $\varepsilon>0$,
\begin{equation}\label{eq:ms-broad-density-local}
 \left\|\prod_{\ell=1}^{n+1}|F_{I_\ell}|^{1/(n+1)}
 \right\|_{L^p(E_R\cap Q)}
 \lesssim
 \rho_{\rm bn}^{-n^2/2}
 (R^{\frac1n})^{\frac12-\frac2p-(\frac1p-\frac1{n+1})+\varepsilon}
 \left\|\left(\sum_{s_\theta\in J}|F_\theta|^{p/2}\right)^{2/p}\right\|_{L^p(W_{Q})}.
\end{equation}

Raise \eqref{eq:ms-broad-density-local} to the $p$-th power and sum over $Q\in\mathcal Q_{R^{1/n}}$, using $\sum_{Q\in\mathcal Q_{R^{1/n}}}W_Q\lesssim W_{B_R^{(n+1)}}$. Note that
\begin{equation}\label{eq:ms-parent-square-sum}
 \sum_J\left(\sum_{s_\theta\in J}|F_\theta|^{p/2}\right)^2
 \lesssim
 \left(\sum_\theta|F_\theta|^{p/2}\right)^2
\end{equation}
because for each $J$ there are $O_{\beta_{\rm bn},n}(1)$ tuples in $\operatorname{Sep}_{n+1}(J;\delta_{\rm br})$ and the intervals $J$ have bounded overlap. Since $\rho_{\rm bn}\gtrsim_{\beta_{\rm bn},n}\Delta$ and $\Delta^{-1}\lesssim_{\beta_{\rm bn},n}R^{\varepsilon_{\rm bn}^2/n}$, the left-hand side of \eqref{eq:ms-broad-final} is bounded by
\begin{equation}\label{eq:ms-broad-density-exponents}
 (R^{\frac1n})^{\frac12-\frac2p-(\frac1p-\frac1{n+1})+\varepsilon+
 \varepsilon_{\rm bn}^2\left(\frac{n^2}{2}+\beta_{\rm bn}\right)}
 \left\|\left(\sum_\theta|F_\theta|^{p/2}\right)^{2/p}\right\|_{L^p(W_{B_R^{(n+1)}})}.
\end{equation}
Choose $\varepsilon_{\rm bn}>0$ so small that
\begin{equation}
 \varepsilon_{\rm bn}^2<\frac1n,
 \qquad
 \varepsilon_{\rm bn}^2\left(\frac{n^2}{2}+\beta_{\rm bn}\right)\leq\frac{1}{4}\left(\frac1p-\frac1{n+1}\right),
\end{equation}
and then choose $0<\varepsilon\leq (\frac1p-\frac1{n+1})/4$. The exponent of $R$ in \eqref{eq:ms-broad-density-exponents} is then at most
\begin{equation}
 \frac1n \Bigg( \frac12-\frac2p-\frac{1}{2}\left(\frac1p-\frac1{n+1}\right) \Bigg).
\end{equation}
Thus we obtain \eqref{eq:ms-broad-final} with
\begin{equation}
 c_{\rm br}:=\frac1{2n}\left(\frac1p-\frac1{n+1}\right)>0.
\end{equation}
This finishes the proof.
\end{proof}

Combining \eqref{eq:ms-master-decomposition}, \eqref{eq:ms-narrow-final}, and \eqref{eq:ms-broad-final}, we obtain
\begin{equation}\label{eq:ms-small-p-final-estimate}
 \left\|\sum_\theta F_\theta\right\|_{L^p(E_R)}
 \lesssim_{p,n,C}
 R^{\frac1n\left(\frac12-\frac2p\right)-c_0}
 \left\|\left(\sum_\theta|F_\theta|^{p/2}\right)^{2/p}\right\|_{L^p(W_{B_R^{(n+1)}})}
\end{equation}
with $c_0:=\min\{c_{\rm nar},c_{\rm br}\}>0$. This completes the proof of Theorem~\ref{0524.thm54} for the case $4<p<n+1$.

\subsection{Interpolation for the remaining exponent range}
\label{subsec:ms-large-p}

We now prove Theorem~\ref{0524.thm54} for $n+1\leq p<n^2+n-2$, using Lemma~\ref{lem:ms-large-p-admissible-interpolation}. Fix $4<p_\#<n+1$, for instance $p_\#=(n+5)/2$, and recall $p_n:=n^2+n-2$. The interpolation calculation below only requires $p_\#<p<p_n$; this overlap with the small-$p$ range will be used in Remark~\ref{rem:ms-gain-local-uniformity} when $n=5$.

Since $E_R\subseteq 2B_R^{(n+1)}$ for sufficiently large $R$, we choose a Schwartz function $\chi$ such that 
\begin{equation}\label{eq:ms-large-p-common-localizer}
 |\chi|\geq1\ \text{on }2B_R,
 \qquad
 \operatorname{supp}\widehat\chi\subset B(0,cR^{-1}),
 \qquad
 |\chi(x)|^t\lesssim W_{B_R^{(n+1)}}(x)
 \quad(p_\#\leq t\leq p_n),
\end{equation}
where we take $c$ sufficiently small so that 
\begin{equation}
 \operatorname{supp}\widehat K_\theta\subset\Pi_R^\varsigma(\theta)+B(0,cR^{-1})\subset\mathfrak B_\eta(s_\theta;R^{-1/n})
\end{equation}
for $K_\theta:=\chi F_\theta$. Thus $(K_\theta)_\theta$ is admissible at scale $R$.

We verify the two endpoint hypotheses of Lemma~\ref{lem:ms-large-p-admissible-interpolation}. Since $4<p_\#<n+1$, every admissible family $\mathbf G=(G_\vartheta)_{\vartheta\in\mathcal G_{R^{-1/n}}}$ at scale $R$ satisfies
\begin{equation}\label{eq:ms-large-p-small-endpoint-preliminary}
\left\|\sum_\vartheta G_\vartheta\right\|_{L^{p_\#}(E_R)}
\lesssim
R^{\frac1n\left(\frac12-\frac2{p_\#}\right)-c_{\rm sp}}
\left\|\left(\sum_\vartheta |G_\vartheta|^{p_\#/2}\right)^{2/p_\#}\right\|_{L^{p_\#}(W_{B_{R}^{(n+1)}})},
\end{equation}
where $c_{\rm sp}=c_{\rm sp}(p_\#,n)>0$ is the gain for the exponent $p_\#$. Choose
\begin{equation}\label{eq:ms-large-p-gamma0}
0<\gamma_0\leq
\min\left\{
c_{\rm sp},
\frac1{2n}\left(\frac12-\frac2{p_\#}\right)
\right\}.
\end{equation}
Then every admissible family $\mathbf G$ indexed by $\mathcal G_{R^{-1/n}}$ at scale $R$ satisfies
\begin{equation}\label{eq:ms-large-p-small-endpoint}
\begin{split}
\left\|\sum_\vartheta G_\vartheta\right\|_{L^{p_\#}(E_R)}
&\lesssim
R^{\frac1n\left(\frac12-\frac2{p_\#}\right)-\gamma_0}
\left\|
\left(\sum_\vartheta|G_\vartheta|^{p_\#/2}\right)^{2/p_\#}
\right\|_{L^{p_\#}(W_{B_R^{(n+1)}})}
\\
&\leq
R^{\frac1n 
\left(\frac12-\frac2{p_\#}\right)-\gamma_0}
\left\|
\left(\sum_\vartheta|G_\vartheta|^{p_\#/2}\right)^{2/p_\#}
\right\|_{L^{p_\#}}.
\end{split}
\end{equation}
At $p_n=n^2+n-2$, apply
\eqref{eq:ms-family-form-gmo} with scale $R$ to obtain
\begin{equation}
\left\|\sum_\vartheta G_\vartheta\right\|_{L^{p_n}(E_R)}
\lesssim_\varepsilon
R^{\frac1n(\frac12-\frac2{p_n})+\varepsilon}
\left\|\left(\sum_\vartheta|G_\vartheta|^{p_n/2}\right)^{2/p_n}
\right\|_{L^{p_n}}.
\end{equation}

For $p\in[n+1,n^2+n-2)$, define
\begin{equation}\label{eq:ms-large-p-lambda}
 \frac1p=\frac\lambda{p_\#}+\frac{1-\lambda}{p_n},
 \qquad
 \lambda=
 \frac{p^{-1}-p_n^{-1}}{p_\#^{-1}-p_n^{-1}}\in(0,1).
\end{equation}
Since $4<p_\#<p<p_n$, we may apply Lemma~\ref{lem:ms-large-p-admissible-interpolation} to $(K_\theta)_\theta$, with
\begin{equation}
 A_0\sim R^{\frac1n\left(\frac12-\frac2{p_\#}\right)-\gamma_0},
 \qquad
 A_1\sim R^{\frac1n\left(\frac12-\frac2{p_n}\right)+\varepsilon}.
\end{equation}
Note that \eqref{eq:ms-large-p-gamma0} implies $A_0,A_1\geq1$ for large $R$. Since $|\chi|\geq1$ on $E_R$, we obtain
\begin{align}
 \left\|\sum_\theta F_\theta\right\|_{L^p(E_R)}
 &\leq
 \left\|\sum_\theta
 K_\theta\right\|_{L^p(E_R)}\notag\\
 &\lesssim_\varepsilon
 R^{O(\varepsilon)}
 \left(R^{\frac1n\left(\frac12-\frac2{p_\#}\right)-\gamma_0}\right)^\lambda
 \left(R^{\frac1n\left(\frac12-\frac2{p_n}\right)}\right)^{1-\lambda}\left\|\left(\sum_\theta |K_\theta|^{p/2}\right)^{2/p}\right\|_{L^p}.
 \label{eq:ms-large-p-after-admissible-interpolation}
\end{align}
Since $K_\theta=\chi F_\theta$, \eqref{eq:ms-large-p-common-localizer} gives
\begin{equation}
 \left\|\left(\sum_\theta |K_\theta|^{p/2}\right)^{2/p}\right\|_{L^p}
 =\left\||\chi|\left(\sum_\theta |F_\theta|^{p/2}\right)^{2/p}\right\|_{L^p}
 \lesssim
 \left\|\left(\sum_\theta |F_\theta|^{p/2}\right)^{2/p}\right\|_{L^p(W_{B_R^{(n+1)}})}.
\end{equation}
Substitution into \eqref{eq:ms-large-p-after-admissible-interpolation} yields
\begin{equation}\label{eq:ms-large-p-before-loss-choice}
 \left\|\sum_\theta F_\theta\right\|_{L^p(E_R)}
 \lesssim_\varepsilon R^{O(\varepsilon)}
 R^{\frac1n\left(\frac12-\frac2p\right)-\lambda\gamma_0}
 \left\|\left(\sum_\theta |F_\theta|^{p/2}\right)^{2/p}\right\|_{L^p(W_{B_R^{(n+1)}})},
\end{equation}
because
\begin{equation}
 \lambda\left(\frac12-\frac2{p_\#}\right)+(1-\lambda)\left(\frac12-\frac2{p_n}\right)=\frac12-\frac2p.
\end{equation}
Taking $\varepsilon>0$ sufficiently small depending on $p$ and $n$, we obtain
\begin{equation}\label{eq:ms-large-p-final-R}
 \left\|\sum_\theta F_\theta\right\|_{L^p(E_R)}
 \lesssim_{p,n,C}
 R^{\frac1n\left(\frac12-\frac2p\right)-\lambda\gamma_0/2}
 \|\left(\sum_\theta |F_\theta|^{p/2}\right)^{2/p}\|_{L^p(W_{B_R^{(n+1)}})}.
\end{equation}
This completes the proof of Theorem~\ref{0524.thm54} with
\begin{equation}
 c(p,n)=\frac{n\lambda(p)\gamma_0}{2}>0.
\end{equation}

\medskip
\paragraph{Verification of Remark~\ref{rem:ms-gain-local-uniformity}}
If $n\ge6$, then $2n-4>n+1$, and the formula $c(p,n)=n\lambda(p)\gamma_0/2$ in \eqref{eq:ms-large-p-lambda}--\eqref{eq:ms-large-p-final-R} is continuous and positive on a neighborhood of $2n-4$. If $n=5$, then $2n-4=6$, and taking $p_\#=(n+5)/2$ as above yields $p_\#=5$. The proof of \eqref{eq:ms-large-p-final-R} uses Lemma~\ref{lem:ms-large-p-admissible-interpolation} and requires only $p_\#<p<p_n$; hence the same proof also applies for $5<p<6$. Its gain $c(p,5)=5\lambda(p)\gamma_0/2$ is continuous and positive on a neighborhood of $6$. This proves Remark~\ref{rem:ms-gain-local-uniformity}.

\bibliographystyle{alpha}
\bibliography{reference}

\end{document}